\theoremstyle{plain}
\newtheorem{corollary}{Corollary}[section]
\newtheorem{theorem}[corollary]{Theorem}
\newtheorem{lemma}[corollary]{Lemma}
\newtheorem{proposition}[corollary]{Proposition}
\newtheorem*{theorem*}{Theorem}
\newtheorem*{lemma*}{Lemma}
\newtheorem*{definition*}{Definition}
\newtheorem*{corollary*}{Corollary}
\theoremstyle{definition}
\newtheorem{algorithm}{Algorithm}
\theoremstyle{remark}
\newtheorem*{remark}{Remark}
\newcommand{\R}{\mathbb{R}}
\newcommand{\proba}{\mathbb{P}}
\newcommand{\N}{\mathbb{N}}
\newcommand{\E}{\mathbb{E}}
\newcommand{\T}{\mathcal{T}}
\newcommand{\F}{\mathbb{F}}
\newcommand{\f}{\mathcal{F}}
\newcommand{\X}{\mathcal{X}}
\newcommand{\1}{\mathbf{1}}
\newcommand{\I}{\mathcal{I}}
\newcommand{\e}{\varepsilon}
\DeclareMathOperator*{\limit}{\longrightarrow}
\DeclareMathOperator*{\asym}{\sim}
\DeclareMathOperator*{\SB}{SB}
\DeclareMathOperator*{\Varr}{Var}
\DeclareMathOperator*{\Covv}{Cov}
\DeclareMathOperator*{\diam}{diam}
\DeclareMathOperator*{\Exp}{Exp}
\DeclareMathOperator*{\st}{st}
\begin{document}
\title{Compactness and fractal dimensions of inhomogeneous continuum random trees.}
\author{Arthur Blanc-Renaudie\thanks{LPSM, Sorbonne Universit\'e, France, Email: arthur.blanc-renaudie@sorbonne-universite.fr} }
\date{\today}
\maketitle
\begin{abstract}
We introduce a new stick-breaking construction for inhomogeneous continuum random trees (ICRT). This new construction allows us to prove the necessary and sufficient condition for compactness conjectured by Aldous, Miermont and Pitman  \cite{ExcICRT} by comparison with L\'evy trees. We also compute the fractal dimensions (Minkowski, Packing, Hausdorff).
\end{abstract}

\section{Introduction}
Since the pioneer work of Aldous in \cite{Aldous1}, the study of continuum random trees (CRT) is considered as a powerful tool to study properties of large random discrete trees. In particular, it has been conjectured in \cite{Aldous1}, that the Brownian CRT is a universal limit for numerous models of trees with large height. 
This has been verified over and over. Furthermore the Brownian CRT model has been extended, for discrete trees with smaller height, toward two main distinct directions. On the one hand, L\'evy trees are introduced, in Le Gall Duquesne \cite{Duquesne1,Duquesne2}, as limits of Galton-Watson trees. On the other hand,  inhomogeneous continuum random trees (ICRT) are introduced by Aldous, Camarri and Pitman, in \cite{IntroICRT1,IntroICRT2}, as limits of $\mathcal{P}$-trees. Those two distinct but similar models leave the following main problem: Finding a universal model for limits of random discrete trees (with no restriction on the height).

To solve this problem, we prove in a forthcoming paper \cite{Uniform}, that ICRT appears as limits of uniform random trees with fixed degree sequence. Since many models of interest can be studied under the spectrum of those trees, this proves that ICRT are universal. In particular L\'evy trees are ICRT with random parameters. The aim of the present paper is twofold: obtain refined information about the ICRT, the universal limit object in particular concerning compactness and fractal dimensions, and introduce some tools for convergence that will be used in \cite{Uniform}.

Our main results are derived from a new version of the stick-breaking construction of the ICRT from Aldous, Pitman \cite{IntroICRT1}. Stick-breaking constructions generate a $\R$-tree (a loopless geodesic space see Le Gall \cite{Legall} for an extensive treatment) and are separated in two steps: 
\begin{compactitem}
\item the line $\R^+$ is first cut into the segments ("sticks") $[0, Y_1],\, (Y_1,Y_2], (Y_2,Y_3] \dots$
\item  the segments are then re-arranged sequentially in a tree-like fashion by gluing $(Y_i,Y_{i+1}]$ at a point $Z_i\leq Y_i$. (see Figure \ref{SB})
\end{compactitem}

Such a construction has been introduced by Aldous \cite{Aldous1} for the Brownian CRT. Recently Amini, Devroye, Griffiths, Olver in \cite{Amini} studied a case where cuts are fixed with $(Y_{i+1}-Y_i)_{i\in \N}$ decreasing. The condition of monotonicity has been removed by Curien and Haas in \cite{Curien} where they construct a probability measure on $\T$, give a sufficient criterion for compactness of $\T$ and compute the Hausdorff dimension of $\T$. We use similar methods in a setting where cuts and glue points are generated according to a random measure $\mu$ on $\R^+$.

\paragraph{Plan of the paper} In the next section we present the new construction for ICRT. Our main results are then stated in Section \ref{Section 3}. In Section \ref{Section 4} we study the measure $\mu$ and cuts. A probability measure is constructed from $\mu$ in Section \ref{Section 5}. The compactness and fractal dimensions are the topics of Sections \ref{Section 6} and  \ref{Section 7} respectively.

\section{Model and definition of the fractal dimensions} \label{Section 2}
\subsection{The ICRT and its construction} \label{def}
Let us first present a generic deterministic stick-breaking construction. It takes for input two sequences in $\R^+$ called cuts ${\textbf y}=(y_i)_{i\in \N}$ and glue points ${\textbf z}=(z_i)_{i\in \N}$, which satisfy
\begin{equation} \forall i<j,\ \ y_i<y_j \qquad ; \qquad y_i\limit \infty \qquad ; \qquad \forall i\in \N,\ \ z_i\leq y_i, \label{2609} \end{equation}
and creates an $\R$-tree by recursively "gluing" segment $(y_i,y_{i+1}]$ at position $z_i$ (see Figure \ref{SB}), or rigorously, by constructing recursively a consistent sequence of distances $(d_n)_{n\in \N}$ on $([0,y_n])_{n\in \N}$.
\begin{figure}[!h]  \label{SB}
\centering
\includegraphics[scale=0.6]{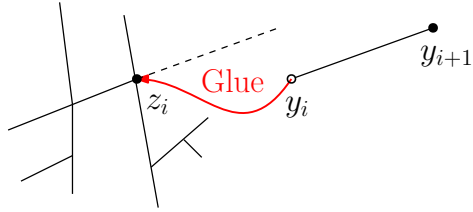}
\caption{A typical step of the stick-breaking construction: the "gluing" of $(y_i,y_{i+1}]$ at $z_i$. }
\label{SB}
\end{figure}
\begin{algorithm} \label{Alg1} \emph{Generic stick-breaking construction.}
\begin{compactitem}
\item[--] Let $d_0$ be the trivial distance on $\{0\}$.
\item[--] For each $n\geq 1$ define $d_n$ on $[0, y_n]$ such that for each $x\leq y$: 
\[ d_n(x,y):=
\begin{cases} 
d_{n-1}(x,y) & \text{if } x,y\in [0, y_{n-1}] \\
d_{n-1}(x,z_{n-1})+|y-y_{n-1}| & \text{if } x \in [0, y_{n-1}], \, y \in (y_{n-1}, y_n] \\
|x-y| &  \text{if } x,y\in (y_{n-1}, y_n]
\end{cases} \]
where by convention $y_0:=0$ and $z_0:=0$.
\item[--] Let $d$ be the unique metric on $\R^+$ which agrees with $d_n$ on $[0, y_n]$ for each $n\in \N$.
\item[--] Let $\SB({\textbf y},{\textbf z})$ be the completion of $(\R^+,d)$.
\end{compactitem}
\begin{remark}
There is a more general way of gluing metric space. (see \cite{Glue} for definition or \cite{Seni} for similar work in this context).  We prefer to work directly on $\R^+$ for practical reasons. %
\end{remark}
\end{algorithm}
We now introduce the probability space that will be used in the paper. Note that the space $\Upsilon:=\bigcup_{{\textbf y},{\textbf z}} \SB({\textbf y},{\textbf z})$ is in bijection with the space of couples of sequences $({\textbf y},{\textbf z})$ that satisfy \eqref{2609}, hence one can naturally define the weak topology on $\Upsilon$. Then we work on a complete probability space such that every random variable defined below are measurable for the weak topology.

Now, let $\Omega$ be the space of sequences $\{\theta_i\}_{i\in \N}$ in $\R^+$ such that: 
\[ \sum_{i=0}^\infty \theta_i^2=1 \quad ; \quad \theta_1\geq \theta_2 \geq \dots \quad ; \quad  \theta_0\neq 0 \text{ or } \sum_{i=1}^\infty \theta_i=\infty. \] The ICRT of parameter $\Theta\in \Omega$ is the random $\R$-tree constructed via the following algorithm.
\begin{algorithm} \label{Alg2} \emph{Classical construction of the $\Theta$-ICRT from \cite{IntroICRT1,IntroICRT2}} \begin{compactitem} 
\item[--] Let $(A_i, B_i)_{i\in \N}$ be a Poisson point process of intensity $\theta_0^2$ on $\{(a,b)\in \R^{+2}: b\leq a \}$.
\item[--] Let $((A_{i,j} )_{j\in\N})_{i\in \N}$ be a family of independent Poisson point processes of intensity $(\theta_i)_{i\in \N}$ on $\R^+$ and independent of $(A_i, B_i)_{i\in \N}$.
\item[--] Sort the elements of the (almost surely) locally finite set $\bigcup_{i=0}^\infty \{A_i\} \cup\bigcup_{i=1}^\infty \bigcup_{j=1}^\infty \{A_{i,j} \}$ as $U=(U_i)_{i\geq 1}$ with $U_1<U_2<\dots$
\item[--] For $i\geq 1$, let $V_i=  \begin{cases} Y_j \, \, \, \, \text{if } U_i \text{ is of the form} & A_j \\ A_{i,0}  \text{ -----------------------} & A_{i,j} \end{cases}$ and let $V=(V_i)_{i\geq 1}$.
\item[--] The (old) $\Theta$-ICRT is defined as $(\T^*,d^*)=\SB(U,V)$.
\end{compactitem}
\end{algorithm}
For technical reasons, it is convenient to deal with the following alternative construction.
\begin{algorithm} \label{Alg3} \emph{New construction of the $\Theta$-ICRT}
\begin{compactitem}
\item[--] Let $(X_i)_{i\in \N}$ be a family of independent exponential random variables of parameter $(\theta_i)_{i\in \N}$.
\item[--] Let $\mu$ be the measure on $\R^+$ defined by $\mu=\theta_0^2 dx+\sum_{i=1}^{\infty} \delta_{X_i} \theta_i$.
\item[--] For each $l\in\R^+$ let $\mu_l$ be the restriction of $\mu$ to $[0,l]$.
\item[--] Let $(Y_i)_{i\in \N}$ be a Poisson point process  on $\R^+$ of rate $\mu[0,l]dl$. \item[--] Let $(Z_i)_{i\in \N}$ be a family of independent random variables with respective laws $\frac{\mu_{Y_i}}{\mu[0,Y_i]}$, $i\in \N$.
\item[--] The (new) $\Theta$-ICRT is defined as $(\T,d)=\SB(Y,Z)$.
\end{compactitem}
\end{algorithm}
\begin{remark}
The construction may fail because $\mu[0,l]$ may be infinite for some $l$. However, since $\mu[0,l]$ is of finite expectation, this almost surely never happens. (See Lemma \ref{2})
\end{remark}
The constructions in Algorithms \ref{Alg2} and \ref{Alg3} are equivalent that is: 
\begin{lemma} \label{equiv}
$(\T^*,d^*)$ and $(\T,d)$ have the same distribution.
\end{lemma}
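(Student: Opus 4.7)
The plan is to couple Algorithms \ref{Alg2} and \ref{Alg3} on one probability space and check that the coupling produces the same random cut-glue sequence; since both trees are then obtained by applying the same deterministic (and, for the weak topology on $\Upsilon$, measurable) map $\SB$, equality in law of the trees follows. The coupling identifies, for each $i \geq 1$, the variable $X_i$ of Algorithm \ref{Alg3} with the first atom $A_{i,0}$ of the rate-$\theta_i$ PPP in Algorithm \ref{Alg2}: indeed $A_{i,0}$ is exponential of parameter $\theta_i$ and, by independence of the lines, $(A_{i,0})_{i \geq 1}$ is a family of independent variables with exactly the prescribed marginals. Under this coupling the random measure $\mu = \theta_0^2\,dx + \sum_{i \geq 1} \theta_i\,\delta_{A_{i,0}}$ is a common input of the two algorithms.

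The heart of the argument is an intensity computation on the half-plane $H := \{(y,z) \in (\R^+)^2 : z \leq y\}$. In Algorithm \ref{Alg3}, the description ``cuts at rate $\mu[0,y]\,dy$ with conditionally independent glue of law $\mu_y/\mu[0,y]$'' is, by the marking theorem for Poisson processes, equivalent to saying that conditionally on $\mu$ the pairs $(Y_i, Z_i)$ form a PPP on $H$ with intensity $\1_{z \leq y}\,\mu(dz)\,dy$. In Algorithm \ref{Alg2}, the Brownian pairs $(A_i, B_i)$ are by definition a PPP on $H$ of intensity $\theta_0^2\,\1_{z \leq y}\,dz\,dy$, while for each $i \geq 1$ the family $\{(A_{i,j}, A_{i,0}) : j \geq 1\}$ is, conditionally on $A_{i,0}$, a PPP supported on the line $\{z = A_{i,0}\}$ with intensity $\theta_i\,\delta_{A_{i,0}}(dz)\,\1_{y > A_{i,0}}\,dy$ (by the memoryless property of the $i$-th exponential). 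These contributions are independent, the Brownian part from the lines and the lines among themselves, so their superposition is again a PPP with total intensity $\1_{z \leq y}\,\mu(dz)\,dy$ -- exactly the one produced by Algorithm \ref{Alg3}.

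Reindexing the atoms of this common PPP by increasing $y$-coordinate then matches $(U_i, V_i)_{i \geq 1}$ to $(Y_i, Z_i)_{i \geq 1}$ pair by pair, yielding equality in distribution of the cut-glue sequences and hence of $(\T^*, d^*)$ and $(\T, d)$. The main obstacle I anticipate is justifying that the reindexing is well-defined, i.e.\ that the superposed PPP is almost surely locally finite on $H$; this amounts to $\mu[0,l] < \infty$ a.s.\ for every $l \geq 0$, which is precisely the content of the forthcoming Lemma \ref{2} under the standing assumption on $\Theta$, and all the remaining measurability checks for $\SB$ with respect to the weak topology on $\Upsilon$ are routine.
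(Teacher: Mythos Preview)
Your proposal is correct and follows essentially the same approach as the paper: both arguments identify $X_i$ with $A_{i,0}$, then show that conditionally on these first atoms the cut--glue pairs form a Poisson point process on $\{(y,z):z\le y\}$ with intensity $\1_{z\le y}\,\mu(dz)\,dy=\theta_0^2\,dy\,dz+\sum_{i\ge1}\theta_i\1_{A_{i,0}\le y}\,dy\,\delta_{A_{i,0}}(dz)$, and conclude by applying $\SB$. Your write-up is slightly more detailed (explicit mention of the marking theorem and of local finiteness via Lemma~\ref{2}), but the mathematical content is the same.
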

\begin{proof} First conditionally on $\{A_{i,0}\}_{i\in \N}$, $\{U_i,V_i\}_{i\in \N}$ is a Poisson point process on $\Delta:=\{(a,b)\in \R^{+2}: b\leq a \}$ of intensity 
\[ \theta_0^2dx dy+\sum_{i=1}^\infty \theta_i \1_{A_{i,0}\leq x} dx \times \delta_{A_{i,0}}. \]
Also, conditionally on $(X_i)_{i\in \N}$, $(Y_i,Z_i)_{i\in\N}$  is a Poisson point process on $\Delta$ of intensity 
\[ \theta_0^2dx dy+\sum_{i=1}^\infty \theta_i \1_{X_i\leq x} dx \times \delta_{X_i}. \]
So since $(X_i)_{i\in \N}$ and $\{A_{i,0}\}_{i\in \N}$ have the same distribution, $(U,V)$ and $(Y,Z)$ also have the same distribution. Finally $(\T^*,d^*)=\SB(U,V)$ and $(\T,d)=\SB(Y,Z)$ have the same distribution.
\end{proof}
Finally let us introduce some notation that will simplify many expressions later. 
\begin{definition*}
For $n\in \N$ let $l_n:=Y_{n}-Y_{n-1}$ denotes the length of the $n$th segment, and let $m_{n}:=\mu(Y_{n-1},Y_{n}]$ denote its weight. Then let $M_n:=\mu[0,Y_n]=m_1+\dots+m_n$. 
\end{definition*}
\subsection{Fractal dimension} \label{2.5}
In the entire section $X$ is a metric space and for every $x\in X$, $\e>0$, $B(x,\e)$ denotes the closed ball centered at $x$ with radius $\e$. We recall the definitions of the fractal dimensions we compute in this paper.
\begin{definition*} (Minkowski dimensions) For every $\e>0$ let $N_\e$ be the minimal number of closed balls of radius $\e$ to cover $X$. Define the Minkowski lower box and upper box dimensions respectively by
\[ \underline{\dim}(X):=\liminf_{l\to \infty} \frac{ \log N_{1/l}}{\log l} \quad \text{and} \quad \overline{\dim}(X):= \limsup_{l\to \infty} \frac{ \log N_{1/l}}{\log l}. \]
\end{definition*}
\begin{definition*} (Packing dimension) For every $s\geq 0$ and $A\subset X$ let 
\[ P^s_0(A):= \limsup_{\delta\to \infty} \left \{ \sum_{i\in I} \diam(B_i)^s \Bigg | \, \{B_i\}_{i\in I} \text{ are disjoint balls $B(x,r)$ with $x\in A$ and $r\leq \delta$}\right \}. \]
and
\[ P^s(X):=\inf \left \{\sum_{i=1}^{\infty} P^s_0(A_i) \Bigg | X\subset \bigcup_{i=1}^{\infty} A_i \right \}. \]
Then $P^s$ is a decreasing function of $s$, and we define the packing dimension of $X$ as 
\[ \dim_P(X):= \sup \{s, P^s(X)<\infty\}. \]
\end{definition*}
\begin{definition*} (Hausdorff dimension) For every $s,r\geq 0$ write
\[ H^s_r(X):= \inf_{\diam(A_i)\leq r} \left \{ \sum_{i=1}^{\infty} \diam(A_i)^s \Bigg | X \subseteq \bigcup_{i=1}^{\infty} A_i \right \}. \]
The Hausdorff dimension of $X$ is defined by
\[ \dim_H(X):=\sup \left \{s, \sup_{r\in \R^+}H_r^s(X)<\infty \right \}. \]
\end{definition*}
To compute the Packing dimension and Hausdorff dimension of the ICRT we will use the following extension of Theorem 6.9, and Theorem 6.11 from \cite{fractal}. (\cite{fractal} deals with subsets of Euclidian space, but the same arguments hold for every metric space.) 
\begin{lemma}\label{Hausdorff} Let $p$ be a Borel probability measure on $X$ and $s\in \R^+$.
 \begin{compactitem} 
  \item[a)]If $p$-almost everywhere $\liminf p(B(x,\e))\e^{-s}<+\infty$ as $\e\to 0$, then $\dim_P(X)\geq s$.
 \item[b)]If $p$-almost everywhere $p(B(x,\e))=O(\e^{s})$ as $\e\to 0$, then $\dim_H(X)\geq s$.
 \end{compactitem}
\end{lemma}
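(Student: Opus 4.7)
My plan is to adapt the classical mass distribution principle (à la Frostman) to both parts, after first reducing the $p$-almost-everywhere hypothesis to a uniform bound on a set of positive $p$-measure. For (b), set $A_M := \{x : p(B(x,\e)) \leq M\e^s \text{ for every } \e \leq 1/M\}$; by hypothesis $p(A_M) \to 1$, so fix $M$ with $p(A_M) > 0$. For any cover $X \subset \bigcup_i U_i$ with $\diam(U_i) \leq r \leq 1/M$, choose $x_i \in U_i \cap A_M$ whenever this intersection is nonempty, so that $U_i \subset B(x_i, \diam(U_i))$ and consequently $p(U_i \cap A_M) \leq M \diam(U_i)^s$. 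Summing and using $p(A_M) \leq \sum_i p(U_i \cap A_M)$ yields $H^s_r(X) \geq p(A_M)/M > 0$ for all small $r$, hence $\dim_H(X) \geq s$. This half is entirely routine.

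For (a), analogously let $A_M := \{x : \liminf_{\e \to 0} p(B(x,\e))\e^{-s} \leq M\}$ and fix $M$ with $p(A_M) > 0$. The key claim is that for any $E \subset X$ with $p(E \cap A_M) > 0$, one has $P_0^s(E) \geq c_M \, p(E \cap A_M)$ for some positive $c_M$. Granting this, applying it to each piece of any countable cover $X \subset \bigcup_i E_i$ and summing gives $\sum_i P_0^s(E_i) \geq c_M \, p(A_M) > 0$, so $P^s(X) > 0$ and $\dim_P(X) \geq s$.

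To prove the claim, fix $\delta > 0$; the goal is to exhibit a disjoint family of balls of radii $\leq \delta$ centered in $E$ whose total $\diam^s$ is bounded below by a positive multiple of $p(E \cap A_M)$. The idea is to \emph{preemptively} choose scales: for each $x \in E \cap A_M$, using that $\liminf_{r \to 0} p(B(x, 5r))/(5r)^s \leq M$ (which follows from the hypothesis by the change of variable $r' = 5r$), pick $r_x \in (0, \delta/5)$ satisfying $p(B(x, 5 r_x)) \leq 2M \cdot 5^s r_x^s$. The $5r$-covering lemma, valid in any metric space, extracts from $\{B(x, r_x)\}_{x \in E \cap A_M}$ a countable disjoint subfamily $\{B(x_j, r_j)\}_j$ whose $5$-enlargements still cover $E \cap A_M$, and then
\[ p(E \cap A_M) \;\leq\; \sum_j p(B(x_j, 5 r_j)) \;\leq\; 2M \cdot 5^s \sum_j r_j^s, \]
which rearranges to the required lower bound on $\sum_j (2 r_j)^s$, independent of $\delta$. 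The main obstacle is precisely this coordination between the liminf condition (which gives good radii only along a sparse sequence at each point) and the $5r$-covering lemma (whose enlargements could land on bad scales); choosing $r_x$ so that $5r_x$ is \emph{itself} a good scale resolves it, after which everything is bookkeeping.
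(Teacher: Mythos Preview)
The paper does not actually prove this lemma: it simply cites Theorems~6.9 and~6.11 of Mattila~\cite{fractal} and remarks that the arguments there, written for subsets of Euclidean space, go through verbatim in an arbitrary metric space. Your write-up is essentially a self-contained version of those arguments (Frostman's mass-distribution principle for~(b), and the Vitali $5r$-covering argument for~(a)), so in that sense you are supplying what the paper omits rather than taking a different route.

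Your proof of~(b) is clean and correct. For~(a), the strategy is right and the clever pre-selection of $r_x$ so that $5r_x$ is itself a good scale is exactly the point. One technicality deserves a comment: the paper defines $P^s_0$ using $\diam(B_i)^s$, whereas your argument produces a lower bound on $\sum_j (2r_j)^s$. In a general metric space $\diam(B(x,r))$ may be strictly smaller than $2r$ (even much smaller), so the last step does not literally close without an extra word. In the setting of the paper this is harmless: $\T$ is a geodesic space, so any closed ball $B(x,r)\neq \T$ satisfies $\diam(B(x,r))\geq r$, and the bound transfers with at most a factor $2^s$. If you want the statement in full generality, either note that the radius-based and diameter-based packing \emph{dimensions} coincide here, or replace $\diam(B_i)^s$ by $(2r_i)^s$ in the definition of $P_0^s$ (as Mattila does). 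Apart from this cosmetic point, the argument is complete.
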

We have the well-known inequalities (see e.g. Chapter 3 of Falconer \cite{FalconPunch}):
\begin{lemma} \label{FalconPunch} For every metric space $X$ we have
\[ \dim_H(X)\leq  \underline{\dim}(X) \leq \overline{\dim}(X) \quad \text{and} \quad \dim_H(X)\leq  \dim_P(X) \leq \overline{\dim}(X). \]
\end{lemma}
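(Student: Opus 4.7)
The plan is to establish each of the four inequalities separately by direct comparison of the defining quantities. I would attack them in the order $\underline{\dim}(X)\leq \overline{\dim}(X)$, then $\dim_H(X)\leq \underline{\dim}(X)$, then $\dim_P(X)\leq \overline{\dim}(X)$, and finally the most delicate $\dim_H(X)\leq \dim_P(X)$.

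The first inequality is immediate from $\liminf\leq \limsup$ applied to $(\log N_{1/l})/(\log l)$. For $\dim_H(X)\leq \underline{\dim}(X)$, fix $s'>s>\underline{\dim}(X)$; by definition of $\liminf$ there is a sequence $l_k\to\infty$ with $N_{1/l_k}<l_k^s$, and the associated minimal cover consists of $N_{1/l_k}$ closed balls of radius $1/l_k$, hence of diameter at most $2/l_k$, so
\[ H^{s'}_{2/l_k}(X)\leq N_{1/l_k}(2/l_k)^{s'}\leq 2^{s'}\, l_k^{s-s'}\longrightarrow 0. \]
Since $r\mapsto H^{s'}_r(X)$ is non-increasing, this forces $H^{s'}_r(X)=0$ for every $r>0$, so $\sup_r H^{s'}_r(X)=0$ and therefore $\dim_H(X)\leq s'$; letting $s'\downarrow\underline{\dim}(X)$ gives the claim.

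For $\dim_P(X)\leq \overline{\dim}(X)$, pick $s>s'>\overline{\dim}(X)$ so that $N_{1/l}\leq l^{s'}$ for $l$ large. Given any disjoint family $\{B(x_i,r_i)\}_{i\in I}$ as in the definition of $P^s_0(X)$, partition $I$ into the dyadic scales $I_k:=\{i:\, 2^{-k-1}<r_i\leq 2^{-k}\}$. Disjointness of the $B(x_i,r_i)$ forces the centers in $I_k$ to be $2^{-k}$-separated, so each ball of radius $2^{-k-1}$ in a minimal cover contains at most one such center; hence $|I_k|\leq N_{2^{-k-1}}\lesssim 2^{(k+1)s'}$ for $k$ large. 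Summing $|I_k|(2\cdot 2^{-k})^s$ over $k$ yields a geometric series of ratio $2^{s'-s}<1$, which uniformly bounds $\sum_{i\in I}\diam(B_i)^s$. Consequently $P^s_0(X)<\infty$, $P^s(X)<\infty$, and $\dim_P(X)\leq s$.

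The hardest step is $\dim_H(X)\leq \dim_P(X)$: here one must convert an efficient diameter-bounded cover (as in $H^s$) into a disjoint packing of comparable $s$-mass (as in $P^s$). The standard tool is a Vitali-type / $5r$-covering lemma. Given $s>\dim_P(X)$, choose a decomposition $X=\bigcup A_i$ with $\sum P^s_0(A_i)<\infty$, and for each $A_i$ and each dyadic radius scale extract a maximal disjoint family of balls centered in $A_i$; the $5$-fold enlargements of these balls cover $A_i$, which yields $H^s(A_i)\leq c_s P^s_0(A_i)$ for a universal constant $c_s$. Summing over $i$ gives $H^s(X)<\infty$ and hence $\dim_H(X)\leq s$. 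The main obstacle is precisely this covering lemma: one must swap a free cover for a packing of centered balls while losing only a multiplicative constant, which requires care about whether balls are centered in $A_i$, about the $5r$-enlargement, and about maximality at each scale. The other three inequalities are essentially bookkeeping once the appropriate sub- or supersequence is chosen.
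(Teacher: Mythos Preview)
The paper does not give its own proof of this lemma; it simply records the inequalities as well known and cites Chapter~3 of Falconer. Your sketch is exactly the standard textbook argument one finds there (and in Mattila), so there is nothing to compare: you have supplied what the paper chose to outsource.

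Your outline is essentially correct. Two small points worth tightening. In the bound $\dim_P(X)\le\overline{\dim}(X)$, disjointness of closed balls in a general metric space only gives $d(x_i,x_j)>\max(r_i,r_j)>2^{-k-1}$, not $2^{-k}$; the clean fix is to cover by balls of radius $2^{-k-2}$ and observe that each such ball has its center inside at most one $B(x_i,r_i)$, whence $|I_k|\le N_{2^{-k-2}}$. In the step $\dim_H(X)\le\dim_P(X)$, the $5r$-covering lemma does hold in arbitrary metric spaces for families of balls with bounded radii, so your plan goes through; just make sure when you write it out that you use $P^s(X)=0$ (not merely $<\infty$) for $s>\dim_P(X)$, so that the covers $X=\bigcup A_i$ can be chosen with $\sum P^s_0(A_i)$ arbitrarily small and hence $H^s(X)=0$.
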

\section{Main results}  \label{Section 3}
The first theorem defines a probability measure on ICRT. 
\begin{theorem} \label{THM1}
Almost surely there is a probability measure $p$ on the tree $\T$ such that
\[ p_l:=\frac{\mu_l}{\mu[0,l]} \limit^{\text{weakly}}_{l\to \infty} p. \]
Furthermore $p$ has support $\T$, has no atoms and gives measure $1$ to the set of leaves (the set of $x\in \T$ such that $\T\backslash\{x\}$ is connected).
\end{theorem}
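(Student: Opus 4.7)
My plan is to construct $p$ as the almost-sure Prokhorov limit of the measures $(p_l)_{l \geq 0}$, viewed as probability measures on $\T$ via the canonical map $\pi : \R^+ \to \T$, and then read off the three structural properties from the approximants. The cornerstone of the existence step is the linear identity
\[ p_{l'} \;=\; \frac{M_l}{M_{l'}}\, p_l \;+\; \frac{1}{M_{l'}}\, \pi_*\!\bigl(\mu|_{(l,l']}\bigr), \qquad 0 \leq l \leq l'. \]
It naturally suggests the coupling: sample $T_{l'} \sim p_{l'}$ and set $T_l := T_{l'}$ if $T_{l'} \leq l$, otherwise draw $T_l$ independently from $p_l$; one checks that $\proba(T_l = T_{l'}) = M_l/M_{l'}$. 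Off this event, $\pi(T_{l'})$ lies in the subtree grown after time $l$ while $\pi(T_l)$ sits on the ``old'' trunk $\pi([0,l])$, so to close a Cauchy estimate in the Prokhorov metric I need the displacement $d(\pi(T_l), \pi(T_{l'}))$ to be small with high probability on this event. This reduces to a uniform geometric bound on how far $\pi((l, l'])$ wanders from $\pi([0,l])$ as $l \to \infty$. Granted such a bound, a Borel--Cantelli argument along a suitable subsequence $l_k \to \infty$ yields an a.s.\ Cauchy sequence, and monotonicity in $l$ extends this to the full convergence $p_l \to p$.

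Once $p$ is built, the three structural properties come from the corresponding features of the $p_l$. For concentration on leaves, for each fixed $l_0$ one has $p_l(\pi([0,l_0])) = M_{l_0}/M_l \to 0$ since $M_l \to \infty$ almost surely (Lemma \ref{2}); approximating $\pi([0,l_0])$ by an open $\e$-neighborhood and using Portmanteau yields $p(\pi([0,l_0])) = 0$. Summing over $l_0 \in \N$ gives $p(\pi(\R^+)) = 0$, and since every point of $\T \setminus \pi(\R^+)$ is a leaf (it is a limit of skeleton points), we conclude $p(\text{leaves}) = 1$. Atomlessness is immediate once one observes that the only atoms of each $p_l$ lie at the points $\pi(X_i)$ and carry mass $\theta_i/M_l \to 0$. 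For full support, given $x \in \T$ and $\e > 0$ I plan to exhibit a constant $c = c(x, \e) > 0$ and infinitely many $l$ with $p_l(\overline{B(x,\e/2)}) \geq c$; this will come from showing that a non-negligible fraction of the glue points $Z_j$ fall into $\pi^{-1}(B(x,\e/2))$, dragging entire subtrees of positive mass into $B(x,\e/2)$. Portmanteau applied to the closed ball then gives $p(B(x,\e)) \geq c > 0$.

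The principal obstacle is the geometric estimate underpinning the coupling. Since Theorem \ref{THM1} makes no compactness hypothesis, the diameters of subtrees grown after time $l$ need not tend to zero uniformly, and one must instead argue that late subtrees of large diameter collectively carry vanishing $p_l$-mass. This will require exploiting the explicit law of the $Z_j$ (sampled from $\mu_{Y_j}/M_j$) against the weights $m_j$ of the late segments, balancing the diameter--mass trade-off so as to extract a subsequence $l_k$ on which Borel--Cantelli closes. I expect this balance to be the technical heart of Section \ref{Section 5}, and to be precisely the reason the bespoke Algorithm \ref{Alg3} is preferred to the classical Algorithm \ref{Alg2}: the inputs $(\mu, Y, Z)$ and the weights $(m_n)$ are directly visible, whereas they are tangled in Algorithm \ref{Alg2}.
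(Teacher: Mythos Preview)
Your coupling does not do what you need it to. On the event $T_{l'}>l$ you resample $T_l$ \emph{independently} from $p_l$, so $\pi(T_l)$ and $\pi(T_{l'})$ are independent random points of $\T_{l'}$; their distance is typically of the order of $\diam(\T_{l'})$, not small. The ``uniform geometric bound on how far $\pi((l,l'])$ wanders from $\pi([0,l])$'' controls $d\bigl(\pi(T_{l'}),\T_l\bigr)$, not $d\bigl(\pi(T_{l'}),\pi(T_l)\bigr)$, and these are very different quantities. The natural fix is to couple by \emph{projection}: set $T_l$ to be the nearest point of $\T_l$ to $\pi(T_{l'})$. Then the displacement is exactly $d(\pi(T_{l'}),\T_l)$, which Lemma~\ref{pizza} controls; but now you must show that the projection of $p_{l'}$ onto $\T_l$ is close to $p_l$. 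This is precisely the content of the paper's mass-conservation lemma (Lemma~\ref{E=MC2}), proved by recognising $\bigl(p_{Y_n}(S^{\uparrow Y_a})\bigr)_{n\ge a}$ as a generalised P\'olya urn for each fixed set $S\subset\T_{Y_a}$. The paper then deduces tightness (Lemma~\ref{tight}) and convergence of $p_l(f)$ for Lipschitz $f$ (Lemma~\ref{manteau}) from this, rather than working with the Prokhorov metric directly.

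Two of your structural arguments also have gaps tied to the same missing ingredient. For the leaves, Portmanteau gives $\limsup_l p_l(F)\le p(F)$ for closed $F$, so $p_l(\pi([0,l_0]))\to 0$ only yields the trivial $p(\pi([0,l_0]))\ge 0$; passing to an open $\e$-neighbourhood $U_\e$ gives $p(U_\e)\le\liminf_l p_l(U_\e)$, but $p_l(U_\e)$ does \emph{not} tend to $0$ for fixed $\e$ (a positive fraction of late mass is glued within $\e$ of the trunk). The paper instead obtains a bound on $p_l$ of the relevant open set that is uniform in $l$, again via Lemma~\ref{E=MC2}, and only then applies Portmanteau. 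For atomlessness, the fact that the atoms of each $p_l$ have mass tending to $0$ does not prevent the weak limit from being atomic: e.g.\ $\tfrac{1}{n}\sum_{i=1}^n\delta_{1/i}\to\delta_0$. One must rule out atoms of $p$ at leaves, which again requires the uniform-in-$l$ estimates. Your full-support sketch is closer in spirit to the paper's argument, but it too needs a way to carry a lower bound on $p_l(B(x,\e))$ through to $p$, and the P\'olya-urn stability is what provides it.
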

This probability is also the limit of other natural empirical measures on $\T$:
\begin{proposition} \label{other}
Let $\mu^{\leadsto}$ be the Lebesgue measure on $\R^+$ and $\mu^{\bullet}=\sum_{i=1}^{\infty} \delta_{Y_i}$. For every $l\in \R^+$ let $\mu^{\leadsto}_l$ (resp. $\mu^{\bullet}_l$) be the restriction of $\mu^{\leadsto}$ $(resp. \mu^{\bullet})$ to $\T_l=([0,l],d)$. Also let for every $l\in \R^+$, $p_l^{\leadsto}=\frac{\mu_l^{\leadsto}}{\mu^{\leadsto}_l[0,l]}$ and $p_l^{\bullet}=\frac{\mu_l^{\bullet}}{\mu^{\leadsto}_l[0,l]}$. Then 
\[ p^{\bullet}_l\limit^{weakly}_{l\to \infty} p \quad \text{ and }\quad p^{\leadsto}_l\limit^{weakly}_{l\to \infty} p. \]
\end{proposition}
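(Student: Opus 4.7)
My plan is to deduce both claims from Theorem~\ref{THM1} by showing that $p_l^{\leadsto}$ and $p_l^{\bullet}$ become asymptotically equivalent to $p_l$ in the weak topology on $\T$. Fixing a bounded continuous $f:\T\to\R$, it suffices to prove that $\int f\,dp_l^{\leadsto} - \int f\,dp_l \to 0$ and $\int f\,dp_l^{\bullet} - \int f\,dp_l \to 0$ almost surely as $l\to\infty$, then apply Theorem~\ref{THM1}.

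For $p_l^{\leadsto}$, using the decomposition $\mu = \theta_0^2\,dx + \sum_{i\geq 1}\theta_i \delta_{X_i}$ together with the identity $\mu[0,l] = \theta_0^2 l + \sum_{X_i\leq l}\theta_i$, a direct computation gives
\[
\int f\,dp_l - \int f\,dp_l^{\leadsto} = \frac{\sum_{X_i\leq l}\theta_i}{\mu[0,l]}\left(\frac{\sum_{X_i\leq l}\theta_i\, f(X_i)}{\sum_{X_i\leq l}\theta_i} - \frac{1}{l}\int_0^l f(x)\,dx\right).
\]
Since the prefactor lies in $[0,1]$, the problem reduces either to showing that it vanishes (which occurs when the Lebesgue part of $\mu$ dominates, i.e. $\theta_0>0$ and $\sum_{X_i\leq l}\theta_i = o(l)$) or to showing that the weighted Dirac average and the spine Lebesgue average of $f$ agree in the limit.

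For $p_l^{\bullet}$, I use that, conditionally on $\mu$, the cuts $(Y_i)$ form a Poisson process of intensity $\mu[0,y]\,dy$. Standard Poisson concentration then yields
\[
\int f\,dp_l^{\bullet} \;\underset{l\to\infty}{\sim}\; \frac{\int_0^l f(y)\,\mu[0,y]\,dy}{\int_0^l \mu[0,y]\,dy},
\]
and a Fubini rearrangement of $\int_0^l f(y)\mu[0,y]\,dy = \int_0^l d\mu(z)\int_z^l f(y)\,dy$ relates this ratio to $\int f\,dp_l$, again modulo the same equidistribution issue.

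The \emph{main obstacle} is this equidistribution step: showing that for $\T$-continuous $f$ the three very different weightings on $[0,l]$ (Lebesgue; weighted Diracs at the hubs $\{X_i:X_i\leq l\}$; empirical measure at the cuts $\{Y_i:Y_i\leq l\}$) produce asymptotically equal integrals. This relies crucially on continuity of $f$ \emph{in the tree metric} together with the geometric fact that the hubs $\{X_i\}$ become dense in $\T$ as $l$ grows -- a property that follows from the estimates on $\mu[0,l]$ and the stick-breaking structure developed in Section~\ref{Section 4}, and in particular from the compactness and mass-distribution estimates underlying Theorem~\ref{THM1}. Once this equidistribution is in hand, the triangle inequality combined with Theorem~\ref{THM1} closes the argument for both convergences.
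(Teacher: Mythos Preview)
Your decomposition for $p_l^{\leadsto}$ is algebraically correct but degenerates precisely in the case that matters. When $\theta_0=0$ the prefactor $\frac{\sum_{X_i\le l}\theta_i}{\mu[0,l]}$ equals $1$, and the bracketed difference is then literally $p_l(f)-p_l^{\leadsto}(f)$ again: the identity is a tautology and yields no information. So the ``equidistribution step'' you flag as the main obstacle is not a residual technicality but the entire content of the proposition in the regime $\theta_0=0$. Appealing to density of the hubs $\{X_i\}$ in $\T$ does not help: the $X_i$ all lie in $\R^+\subset\T$, which has $p$-measure zero (Theorem~\ref{THM1}), so they are certainly not dense in $(\T,d)$ in any sense that would force equidistribution against tree-continuous $f$. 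The same circularity reappears in your treatment of $p_l^{\bullet}$: the Fubini rearrangement produces an integral against a weighting $(l-z)\,d\mu(z)$ that is not $p_l$, and you again defer to the unresolved equidistribution.

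The paper avoids this entirely by exploiting a structural fact you do not use: the $k$-th stick $(Y_{k-1},Y_k]$ is glued at $Z_{k-1}$, whose conditional law is $p_{Y_{k-1}}$. Hence for \emph{any} measure $\mu^{\alpha}$ satisfying mild growth conditions, the proportion of $\mu^{\alpha}$-mass landing in a set $S^{\uparrow Y_a}$ is, by a law of large numbers (Lemma~\ref{strong2}), asymptotically $\lim_k p_{Y_k}(S^{\uparrow Y_a})=p(S^{\uparrow Y_a})$. This makes the limit $p$ \emph{universal} across all reasonable stick weightings, without ever comparing $\mu^{\leadsto}$ or $\mu^{\bullet}$ to $\mu$ on $[0,l]$. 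Your approach tries to compare the measures on the spine; the paper instead compares their push-forwards under the gluing dynamics, which is what actually forces the common limit.
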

Intuitively speaking this comes from the fact that $\mu$ "dictates" how segments are glued together so the convergence of $p_l$ implies the convergence of many others quantities.
\begin{remark} Proposition \ref{other} shows that $p$ corresponds to the probability measure introduced in Aldous Pitman \cite{IntroICRT1}. In particular independent leafs sampled by $p$ "behaves" like $(Y_i)_{i\in \N}$. (\cite{IntroICRT1} Corollary 8)  \end{remark}
Then we prove the conjecture of Aldous, Miermont, Pitman in \cite{ExcICRT} about compactness.
\begin{theorem} \label{THM2}
The ICRT is almost surely compact if and only if
\begin{equation} \int^{\infty} \frac{1}{l\E[\mu[0,l]]}<\infty. \label{1404} \end{equation}
\end{theorem}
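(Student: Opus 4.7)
The plan is to work with the height function $H:\R^+\to\R^+$ defined by $H(x)=d(0,x)$. On each stick $(Y_{n-1},Y_n]$, $H$ is affine of slope $1$, and at each glue time $Y_n$ it obeys the recursion
\[ H(Y_n) \;=\; H(Z_{n-1}) + l_n. \]
By Theorem~\ref{THM1} and Proposition~\ref{other}, $(Y_i)_{i\in\N}$ is dense in $\T$, so $\T$ is compact iff $\{Y_i:i\in\N\}$ is totally bounded under $d$, which in particular requires $\sup_n H(Y_n)<\infty$.

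\paragraph{Sufficient direction.}
Assume $\int^\infty\frac{dl}{l\,\E[\mu[0,l]]}<\infty$. I would proceed in two steps. First, I would establish $\sup_n H(Y_n)<\infty$ almost surely. Conditionally on $\mu$, the pair $(Y_n,Z_n)_{n\in\N}$ is a Poisson-driven object of intensity $\mu[0,l]dl$, and together with the recursion for $H(Y_n)$ above, a size-biasing argument on the Poisson process $(Y_n)$ produces a bound for $\E[H(Y_n)\mid\mu]$ in terms of integrals of the form $\int^{Y_n}\frac{dl}{l\,\mu[0,l]}$, whose expectation is controlled by the hypothesis. Moment bounds plus a Borel--Cantelli argument should promote this to almost sure boundedness. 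Second, I would show total boundedness: for each $\e>0$ only finitely many segments satisfy $l_n>\e$ (since $\sum_n\proba(l_n>\e)<\infty$ under the same integral hypothesis), so the subtree spanned by these ``long sticks'' together with the initial stick forms a finite tree whose $\e$-thickening covers $\T$.

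\paragraph{Necessary direction.}
Conversely, assume $\int^\infty\frac{dl}{l\,\E[\mu[0,l]]}=\infty$. Here I would exploit the comparison with L\'evy trees mentioned in the abstract: the quantity $\E[\mu[0,l]]$ corresponds (up to a change of variable) to $\psi'$ for the branching mechanism $\psi$ of an associated L\'evy tree, so the divergence of our integral translates into the failure of Grey's criterion for that L\'evy tree. By coupling $\T$ with the ICRT-representation of this non-compact L\'evy tree (viewed as an ICRT with randomised parameter $\Theta$), one transfers non-compactness to $\T$; alternatively, one exhibits an infinite family of sticks with $l_n>\e$ glued at mutually far-apart positions, violating total boundedness.

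\paragraph{Main obstacle.}
The central difficulty is the entanglement between the stick lengths $l_n$ and the glue positions $Z_{n-1}$ through the common random measure $\mu$: independence between the two is lost, and one must first condition on $\mu$, carefully describe the law of $(Y_n,Z_n)$ under this conditioning, and then pass from the deterministic integrals involving $\mu[0,l]$ to their expectations. Controlling this step (for instance via concentration of the exponential-atom measure $\mu$, or via a monotone comparison with a L\'evy random measure) is what makes the equivalence between a deterministic criterion on $\mu$ and the expected criterion $\int\frac{dl}{l\,\E[\mu[0,l]]}$ non-trivial.
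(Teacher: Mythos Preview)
Your second step is where the argument breaks down. The property ``for each $\e>0$ only finitely many segments satisfy $l_n>\e$'' holds for \emph{every} parameter $\Theta\in\Omega$, not just those satisfying the integral condition. Indeed, conditionally on $\mu$ and $Y_{n-1}$, one has $\proba(l_n>\e)\le e^{-\e\,\mu[0,Y_{n-1}]}$, and since $\mu[0,Y_{n-1}]\to\infty$ almost surely (Lemma~\ref{5}), the sum $\sum_n \proba(l_n>\e)$ converges regardless of whether~\eqref{1404} holds. So your argument, if valid, would prove compactness in all cases, which is false. The flaw is the claim that the $\e$-thickening of the ``long-sticks subtree'' covers $\T$: short sticks can glue onto one another to form arbitrarily long paths, and controlling this aggregation is exactly where the hypothesis~\eqref{1404} enters. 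The paper handles this by a completely different mechanism: it shows (Lemma~\ref{BIG LEMMA}, via the distance estimate Lemma~\ref{pizza}) that $d_H(\T_{\X_{2^{k-1}}},\T_{\X_{2^k}})\le 21\,\frac{\log\X_{2^k}}{2^k}$, and then uses the equivalent summability condition $\sum_n \frac{\log\X_{2^n}}{2^n}<\infty$ (Lemma~\ref{condition}) to conclude that $(\T_{\X_{2^k}})_{k}$ is Cauchy for the Hausdorff metric. Your first step (boundedness of $H$) is plausible but by itself only gives boundedness of $\T$, which is strictly weaker than compactness.

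\paragraph{Necessary direction: not a proof.}
Invoking ``the comparison with L\'evy trees mentioned in the abstract'' and ``Grey's criterion'' is circular: that comparison is precisely the conjecture of Aldous--Miermont--Pitman that the theorem is meant to settle. No coupling between a fixed-$\Theta$ ICRT and a L\'evy tree with matching $\psi$ is available in the paper (or in the literature), so this route would require building one from scratch. The paper instead gives a direct constructive argument: it identifies, for each dyadic scale $n$, a large family $\I_n$ of ``long'' segments of length at least $L_n=\frac{\log\X_{2^n}}{2^{n+2}}$ (Lemma~\ref{Long}), shows that long segments at successive scales glue onto each other (Lemma~\ref{glue}), and chains these to produce a point at distance at least $c\sum_{n\ge k}\frac{\log\X_{2^n}}{2^n}$ from $\T_{\X_{2^k}}$ (Lemma~\ref{manger}). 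Your ``alternatively'' sentence gestures at this idea, but the substantive work---quantifying how many long segments exist and why they aggregate---is absent.
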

\begin{remark}
The conjecture in \cite{ExcICRT} is based on a comparison between the ICRT and Levy trees introduced by Le Gall Le Jan \cite{IntroLevy1}. Levy trees are characterized by their Laplace exponent $\psi$ and are compact if and only if $\int^{\infty} \frac{1}{\psi(l)} <\infty$ (see \cite{Duquesne2}). The formulation of the conjecture in  \cite{ExcICRT} is based on an analog of the Laplace exponent in the setting of ICRT, which behaves like $l\E[\mu[0,l]]$ (see Lemma \ref{condition}) which turns out to be equivalent to \eqref{1404}.
\end{remark}
For the proof of Theorem \ref{THM2}, we first translate the condition in \eqref{1404} into a more convenient one: it turns out (Lemma \ref{condition}) that
\[ \int^{\infty} \frac{1}{l\E[\mu[0,l]]}<\infty \quad \text{if and only if} \quad \sum_{n=1}^{\infty} \frac{\log \X_{2^n} }{\X_{2^n}} <\infty,\]  where for every $l\in \R^+$, $\X_l$ is the real number such that $\E[\mu[0,\X_l]]=l$ (see Lemma \ref{2} for existence and uniqueness).

To prove that the condition is sufficient, we will upper bound the law of the distance between a random point in $\T_{\X_{2^n}}$ and its projection on $\T_{\X_{2^{n-1}}}$. We then use this bound to prove that  \[ d_H(\T_{\X_{2^n}},\T_{\X_{2^{n-1}}})\leq C\frac{\log \X_{2^n} }{\X_{2^n}}, \]
where $d_H$ denotes the Hausdorff distance on subsets of $\T$. For the Hausdorff topology, Cauchy sequences of compact sets converge toward a compact set so this proves that $\sum_{n=1}^{\infty} \frac{\log \X_{2^n} }{\X_{2^n}} <\infty$ implies that $\T$ is compact.

The fact that the condition is necessary follows from an adaptation of an argument of Amini, Devroye, Griffiths, Olver in \cite{Amini}. We show that, for some fixed constants $c,C\in (0,\infty)$ and for all $k$ large enough:
\[ c \sum_{n=k+1}^{\infty} \frac{\log \X_{2^n} }{\X_{2^n}} \leq d_H \left (\T,\T_{\X_{2^k}} \right ) \leq C \sum_{n=k+1}^{\infty} \frac{\log \X_{2^n} } {\X_{2^n}}. \]

We then proceed to the computation of some fractal dimensions.
\begin{theorem} \label{THM3}
Almost surely 
\[\dim_P(\T) =\overline{\dim}(\T) =1+\limsup_{l\to \infty} \frac{\log l}{\log \E[\mu[0,l]]}.\]
Furthermore if $\log l=\E[\mu[0,l]]^{o(1)}$ then 
\[\dim_H(\T)=\underline{\dim}(\T)=1+\liminf_{l\to \infty} \frac{\log l}{\log \E[\mu[0,l]]}.\]
\end{theorem}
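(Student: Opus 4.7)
By Lemma \ref{FalconPunch}, writing $\mathcal{F}(l):=\E[\mu[0,l]]$, $D:=1+\limsup_{l\to\infty}\log l/\log \mathcal{F}(l)$, and $d:=1+\liminf_{l\to\infty}\log l/\log \mathcal{F}(l)$, the theorem reduces to four inequalities: $\overline{\dim}(\T)\leq D$ and $\dim_P(\T)\geq D$ almost surely; and, under the extra hypothesis, $\underline{\dim}(\T)\leq d$ and $\dim_H(\T)\geq d$ almost surely. My plan is to derive the two lower-dimension bounds from Lemma \ref{Hausdorff} applied to the measure $p$ of Theorem \ref{THM1}, and the two Minkowski upper bounds from a packing argument built on the same measure $p$, so that all four statements boil down to matching two-sided estimates on $p(B(x,\e))$ for small $\e$.

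The heart of the argument is the following ball-measure scaling, which I would establish for $p$-almost every $x\in\T$: for every $\delta>0$, a lower bound $p(B(x,\e))\geq \e^{D+\delta}$ holds for all sufficiently small $\e$ (and, analogously, $\geq\e^{d+\delta}$ along an appropriate sequence), together with an upper bound $p(B(x,\e))\leq \e^{D-\delta}$ infinitely often (and, under $\log l=\mathcal{F}(l)^{o(1)}$, $\leq\e^{d-\delta}$ for all small $\e$). The guiding heuristic is a scale correspondence: by Algorithm \ref{Alg3} the cuts $(Y_i)$ form a Poisson process of rate $\mu[0,l]\,dl$, so segments around arc-length $l$ have typical length $\asymp 1/\mathcal{F}(l)$, and consequently a metric ball of radius $\e$ corresponds to arc-length scale $l\asymp \X_{1/\e}$, where $\X=\mathcal{F}^{-1}$. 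Using the weak convergence $p_l\to p$ from Theorem \ref{THM1} and Proposition \ref{other}, this should translate into $p(B(x,\e))\asymp \e/\X_{1/\e}$, so that $\log p(B(x,\e))/\log\e\to 1+\log\X_{1/\e}/\log(1/\e)$; writing $l:=\X_{1/\e}$, the right-hand side equals $1+\log l/\log\mathcal{F}(l)$, and taking $\limsup$ (resp.\ $\liminf$) as $\e\to 0$ (equivalently $l\to\infty$) recovers $D$ (resp.\ $d$).

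Granted these ball estimates, the dimension bounds follow: Lemma \ref{Hausdorff}(a) with the subsequential upper estimate on $p(B(x,\e))$ gives $\dim_P(\T)\geq D-\delta$, and Lemma \ref{Hausdorff}(b) with the uniform-in-$\e$ upper estimate (available under the extra hypothesis) gives $\dim_H(\T)\geq d-\delta$; letting $\delta\to 0$ closes both. For the upper Minkowski bounds I would use a standard packing argument: any $2\e$-separated set $\{x_i\}\subset\T$ yields disjoint balls $B(x_i,\e)$ with $\sum_i p(B(x_i,\e))\leq p(\T)=1$, so by the lower estimate at most $\e^{-D-\delta}$ (resp.\ $\e^{-d-\delta}$ on the appropriate subsequence) such points fit; a maximal such family is also a $2\e$-net of $\T$, giving $N_{2\e}(\T)\leq \e^{-D-\delta}$ and hence $\overline{\dim}(\T)\leq D$, and likewise $\underline{\dim}(\T)\leq d$ from the analogous subsequential bound.

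The main obstacle is the ball-measure estimate itself. A $p$-typical $x$ is a leaf realized only as a limit, so the ball $B(x,\e)$ must be described back in $\R^+$ through the recursive gluing of Algorithm \ref{Alg3} along the ancestral arc of $x$, carefully accounting for every subtree branching off within metric distance $\e$; the tools of Sections \ref{Section 4}--\ref{Section 5} should support this bookkeeping. A second subtlety is replacing the random mass $\mu[0,l]$ by the deterministic $\mathcal{F}(l)$ with multiplicative error small enough not to destroy the power-law exponent, despite the potentially heavy-tailed atomic part of $\mu$. Finally, the role of the assumption $\log l=\mathcal{F}(l)^{o(1)}$ is precisely to dominate the logarithmic fluctuations in the $\liminf$ estimate, upgrading the Hausdorff upper bound on $p(B(x,\e))$ from a subsequential statement to one uniform in $\e$, which is exactly what Lemma \ref{Hausdorff}(b) demands.
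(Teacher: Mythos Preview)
Your plan for the lower bounds $\dim_P(\T)\geq D$ and $\dim_H(\T)\geq d$ via Lemma \ref{Hausdorff} and \emph{upper} estimates on $p(B(A,\e))$ is exactly the route taken in Section \ref{Section 7}, and your scale heuristic $p(B(x,\e))\asymp \e/\X_{1/\e}$ is the right intuition; the paper realizes it through the pair of estimates labelled (a) and (b) there.

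The Minkowski upper bounds are where your argument diverges from the paper and has a real gap. Your packing argument needs the lower bound $p(B(x_i,\e))\geq \e^{D+\delta}$ to hold \emph{at every point $x_i$ of a maximal $2\e$-separated set}, not merely for $p$-almost every $x$. A $p$-a.e.\ statement says nothing about the particular finite set $\{x_i\}$; full support of $p$ does not rescue you either, because the threshold ``for all sufficiently small $\e$'' in your lower bound depends on the point, so you cannot uniformly replace each $x_i$ by a nearby $p$-typical one. Establishing a uniform-in-$x$ lower bound on ball masses in this random tree is a genuinely harder statement than the $p$-a.e.\ upper bound, and none of the machinery in Sections \ref{Section 4}--\ref{Section 5} (in particular Lemma \ref{E=MC2}) is set up to deliver it.

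The paper bypasses ball-mass lower bounds entirely. Since $\T_{\X_l}=([0,\X_l],d)$ is a metric tree of total length $\X_l$, it is covered by at most $l\X_l$ balls of radius $2/l$; enlarging each radius by $d_H(\T_{\X_l},\T)$ yields a cover of $\T$, and the Hausdorff distance is controlled by Lemma \ref{BIG LEMMA}. This is also where the extra hypothesis actually enters: one needs $d_H(\T_{\X_l},\T)=l^{-1+o(1)}$, which Lemma \ref{BIG LEMMA} gives precisely under $\log\X_l=l^{o(1)}$, in order to obtain the $\underline{\dim}$ upper bound. The Hausdorff lower bound $\dim_H(\T)\geq d$, by contrast, is proved in Section \ref{Section 7} from (a) and (b) without invoking the hypothesis, so your attribution of its role is inverted.
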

\begin{remark} If one replaces $l\E[\mu[0,l]]$ by the Laplace exponent $\psi$ then one recovers the formulas for the fractal dimensions of Levy trees obtained by Duquesne and Le Gall \cite{Duquesne1}.
\end{remark}
To prove Theorem \ref{THM3}, it suffices by Lemma \ref{FalconPunch} to upper bound the Minkowski dimensions and to lower bound the Packing and Hausdorff dimension. To upper bound $\overline{\dim}(\T)$ and $\underline{\dim}(\T)$  we use some cover of $\T$ which relies on $\log l=\E[\mu[0,l]]^{o(1)}$. Then we derive the lower bound on $\dim_P(\T)$ and $\dim_H(\T)$ from Lemma \ref{Hausdorff}.
\section{Preliminaries}  \label{Section 4}
This section should be seen as a tool box: we gather here a collection of lemmas that will be used repeatidly throughout the paper. Most of them are straightforward.
\subsection{Fundamental properties of $\mu$}
\begin{lemma} \label{2} 
The map $l\to\E[\mu[0,l]]$ is differentiable and its derivative decreases to $\theta_0^2$ as $l\to \infty$ we thus have as $l\to \infty$:
\[ \E[\mu[0,l]]=\theta_0^2l+o(l). \]
\end{lemma}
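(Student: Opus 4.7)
The plan is to compute $\E[\mu[0,l]]$ explicitly from the definition of $\mu$, differentiate term by term, and then analyze the resulting series.

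First I would write, using Fubini/monotone convergence and the fact that $X_i$ is exponential with parameter $\theta_i$, the identity
\[ \E[\mu[0,l]] = \theta_0^2 l + \sum_{i=1}^\infty \theta_i \, \proba(X_i \leq l) = \theta_0^2 l + \sum_{i=1}^\infty \theta_i\bigl(1 - e^{-\theta_i l}\bigr). \]
To differentiate under the sum, I would justify termwise differentiation by noting that on any compact interval of $l$ the derivatives $\theta_i^2 e^{-\theta_i l}$ are nonnegative and dominated by $\theta_i^2$, with $\sum_{i\geq 1}\theta_i^2 = 1-\theta_0^2 < \infty$. This gives
\[ \frac{d}{dl}\E[\mu[0,l]] = \theta_0^2 + \sum_{i=1}^\infty \theta_i^2 e^{-\theta_i l}. \]

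Each summand $\theta_i^2 e^{-\theta_i l}$ is strictly decreasing in $l$, so the whole derivative is decreasing in $l$. Since each summand tends to $0$ as $l\to\infty$ and the series is dominated by the summable sequence $(\theta_i^2)_{i\geq 1}$, dominated convergence yields
\[ \lim_{l\to\infty} \frac{d}{dl}\E[\mu[0,l]] = \theta_0^2. \]

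Finally, to pass from the derivative statement to the asymptotic $\E[\mu[0,l]] = \theta_0^2 l + o(l)$, I would apply the standard fact (essentially L'H\^opital's rule, or a direct $\varepsilon$-$l_0$ argument integrating the derivative) that if $f$ is differentiable on $\R^+$ with $f'(l)\to c$, then $f(l)/l \to c$. The only mildly delicate point is the uniform control needed to differentiate the series, but this is immediate from $\sum_{i\geq 1}\theta_i^2 < \infty$, so I do not expect any real obstacle.
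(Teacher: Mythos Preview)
Your proof is correct and follows essentially the same route as the paper: compute $\E[\mu[0,l]]=\theta_0^2 l+\sum_{i\ge1}\theta_i(1-e^{-\theta_i l})$ via Fubini, differentiate term by term using the summable bound $\theta_i^2$, and apply dominated convergence to get the derivative decreasing to $\theta_0^2$. The only addition you make is spelling out the passage from $f'(l)\to\theta_0^2$ to $f(l)=\theta_0^2 l+o(l)$, which the paper leaves implicit.
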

\begin{proof} By Fubini's theorem,
\begin{equation} \E\left [\mu[0,l]-\theta_0^2l \right ]=\E \left [\sum_{i=1}^{\infty}\theta_i\1_{X_i\leq l} \right ]=\sum_{i=1}^{\infty}\theta_i\proba \left (X_i\leq l \right )=\sum_{i=1}^{\infty}\theta_i(1-e^{-\theta_il}).\label{107} \end{equation}
Each term of the sum is positive and increasing so we can differentiate term by term: 
\[ \frac{d}{dl}\E\left [\mu[0,l]-\theta_0^2l \right ]=\sum_{i=1}^\infty \theta_i^2e^{-\theta_il}.\]
Since $\sum_{i=1}^\infty \theta_i^2<\infty$, by bounded convergence the last term decreases to $0$ as $l\to \infty$.
\end{proof}
Lemma \ref{2} implies that the map $l\mapsto \E[\mu[0,l]]$ is strictly increasing, continuous, and diverges, so is invertible. Thus for every $l\in \R^+$, there is a well-defined real number $\X_l$ with $\E[\mu[0,\X_l]]=l$.
\begin{lemma} \label{5} 
We have almost surely
\[\mu[0,l]\asym_{l\to\infty} \E \left [\mu[0,l] \right ].\]
\end{lemma}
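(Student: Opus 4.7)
The plan is to reduce to an $L^2$ estimate and a Borel--Cantelli argument along a well-chosen deterministic subsequence, then interpolate using monotonicity.

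First I would compute $\Varr(\mu[0,l])$. Since the deterministic part $\theta_0^2 l$ contributes nothing to the variance, and the random atoms $\theta_i \1_{X_i \leq l}$ are independent Bernoulli contributions,
\[ \Varr(\mu[0,l]) = \sum_{i=1}^{\infty} \theta_i^2 \, e^{-\theta_i l}\bigl(1 - e^{-\theta_i l}\bigr) \leq \sum_{i=1}^{\infty}\theta_i^2 \leq 1. \]
This uniform bound (which crucially uses $\sum \theta_i^2 < \infty$) is the key quantitative input.

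Next, using Lemma \ref{2}, the function $l \mapsto \E[\mu[0,l]]$ is continuous, strictly increasing and unbounded, so I can define the subsequence $l_n := \X_n$, i.e.\ $\E[\mu[0,l_n]] = n$. By Chebyshev,
\[ \proba\bigl(|\mu[0,l_n] - n| > \e n\bigr) \leq \frac{\Varr(\mu[0,l_n])}{\e^2 n^2} \leq \frac{1}{\e^2 n^2}, \]
which is summable. Borel--Cantelli applied along a sequence $\e_k \downarrow 0$ then yields $\mu[0,l_n]/n \to 1$ almost surely.

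Finally I would interpolate. Both $l \mapsto \mu[0,l]$ and $l \mapsto \E[\mu[0,l]]$ are non-decreasing, so for $l \in [l_n,l_{n+1}]$,
\[ \frac{\mu[0,l_n]}{n+1} \leq \frac{\mu[0,l]}{\E[\mu[0,l]]} \leq \frac{\mu[0,l_{n+1}]}{n}, \]
and the left and right sides both converge to $1$ almost surely by Step 2. This yields the claim. There is no real obstacle here; the only point worth highlighting is the choice of subsequence $\X_n$ rather than, say, $2^n$, which guarantees both the summability in Borel--Cantelli and the ratio $\E[\mu[0,l_{n+1}]]/\E[\mu[0,l_n]] \to 1$ needed for the interpolation.
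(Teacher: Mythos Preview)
Your proof is correct and follows essentially the same route as the paper: bound $\Varr(\mu[0,l])\le 1$, apply Chebyshev and Borel--Cantelli along a subsequence $\X_{\cdot}$, then interpolate by monotonicity. The only cosmetic difference is that the paper takes the subsequence $\X_{n^2}$ with deviation threshold $n$ (so that a single application of Borel--Cantelli suffices without an auxiliary sequence $\e_k\downarrow 0$), whereas you take $\X_n$ with threshold $\e n$.
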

\begin{proof}  For every $l\in \R^+$ the variance of $\mu_l$ is given by: 
\[\Varr [\mu[0,l]]= \Varr \left [\theta_0^2l+\sum_{i=1}^{\infty}\theta_i\1_{X_i\leq l} \right ]=\sum_{i=1}^{\infty} \Varr \left [\theta_i\1_{X_i\leq l} \right ] \leq \sum_{i=1}^{\infty}\theta_i^2 \leq 1.\]
Therefore for every $n\in \N$,
\[ \proba  \left ( \left | \mu[0,\X_{n^2}] -\E \left  [\mu[0,\X_{n^2}] \right ] \right | > n \right ) \leq \frac{1}{n^2}. \]
By definition of $\X_n$ we deduce by the Borel--Cantelli lemma that for every $n$ large enough 
\[ n^2-n\leq \mu[0,\X_{n^2}] \leq n^2+n. \]
We thus have almost surely $\mu[0,\X_{n^2}] \sim \E[\mu[0,\X_{n^2}]]=n^2$. This result is then extended to every $l\in\R^+$ by monotonicity of  $l\mapsto \mu[0,l]$.
\end{proof}
Note that Lemmas \ref{2} and \ref{5} implies that for every $l$ large enough $\mu[0,l]\leq l$.

The following lemma should be seen as an estimate for the "density" and "jump" of $l\mapsto \mu[0,l]$. 
\begin{lemma} \label{6} 
Almost surely there exists $L_0\in \R$ such that for every $l\geq L_0$ and $0\leq \delta \leq l$,
\[ \mu[l,l+\delta] \leq 2 \delta \frac{\E \left [  \mu \left [0,l\right ] \right ]}{l}+\frac{13\log (l)}{l}. \]
\end{lemma}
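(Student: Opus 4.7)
My plan is to separate the target bound into a mean part and a fluctuation part, and control each.

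By Lemma \ref{2}, the derivative $\phi(l) := \frac{d}{dl}\E[\mu[0,l]] = \theta_0^2 + \sum_i \theta_i^2 e^{-\theta_i l}$ is positive and decreasing. In particular,
\[ \E[\mu[l, l+\delta]] = \int_l^{l+\delta}\phi(t)\, dt \leq \delta\,\phi(l) \leq \frac{\delta}{l}\int_0^l\phi(t)\,dt = \frac{\delta\, \E[\mu[0, l]]}{l}. \]
Hence the summand $2\delta\E[\mu[0,l]]/l$ of the target inequality already covers the mean with a factor of one to spare for the fluctuation.

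To control the fluctuation $\mu[l, l+\delta] - \E[\mu[l, l+\delta]] = \sum_i \theta_i(\1_{X_i \in [l, l+\delta]} - p_i)$, with $p_i := e^{-\theta_i l}(1-e^{-\theta_i \delta})$, I would use a Bernstein-type concentration inequality after truncating the heaviest atoms. The variance is at most $\delta\phi(l) \leq \delta/(el)$ (using $\theta_i e^{-\theta_i l} \leq 1/(el)$). For the maximum jump, observe that $\sum \theta_i^2 \leq 1$ gives at most $n^2/(c^2\log^2 n)$ indices with $\theta_i > c\log(n)/n$; each contributes probability $\leq n^{-c}$ of falling in $[n, 2n]$, so for $c$ sufficiently large, Borel--Cantelli over integer $n$ ensures that almost surely, for $n$ large enough, no atom in $[n, 2n]$ has weight exceeding $c\log(n)/n$. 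On this event Bernstein's inequality becomes applicable with maximal jump $c\log(n)/n$.

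I would then apply Bernstein at a dyadic grid $\delta_k = 2^k/n$, $k = 0, \dots, O(\log n)$, with threshold $\delta_k\E[\mu[0, n]]/n + 13\log(n)/(2n)$, and union-bound over the $O(\log n)$ dyadic values and over $n \in \N$ to invoke Borel--Cantelli. Monotonicity of $\delta \mapsto \mu[n, n+\delta]$ together with the linear slack $\delta\E[\mu[0, n]]/n$ extends the estimate from the dyadic grid to all $\delta \in [0, n]$. Finally, the bound passes from integer $n$ to real $l$ via $\mu[l, l+\delta] \leq \mu[\lfloor l\rfloor, \lfloor l\rfloor + \delta + 1]$, the additional unit in $\delta$ producing a shift of order $O(1/l)$ easily absorbed by the $\log(l)/l$ term.

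The delicate step, and the source of the constant $13$, is in calibrating the truncation level, the dyadic grid, and the additive threshold to make the Bernstein exponent summable in $n$ uniformly in $k$. The small-$\delta_k$ regime is the tightest: there the target bound is dominated by the additive $\log(l)/l$ part, the variance is small so the maximum-jump term drives the Bernstein denominator, and it is precisely the truncation-induced bound on jumps that permits summability. The numerical value $13$ records this balance and is not sharp.
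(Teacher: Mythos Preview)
Your plan has a real gap in the small-$\delta$ regime, precisely where you claim the truncation ``permits summability.'' For the Borel--Cantelli on heavy atoms to go through, the truncation level must be $M=c\log(n)/n$ with $c>3$ (the union bound over at most $n^2/(c\log n)^2$ indices, each with probability $\le n^{-c}$, gives $n^{2-c}$, and you need this summable in $n$). But then at a small grid point such as $\delta_0=1/n$ the variance $V\le\delta_0/(en)$ is negligible, the fluctuation threshold $t$ is of order $\log(n)/n$, and the Bernstein exponent collapses to $3t/(2M)=O(1)$: the resulting probability bound is a fixed constant independent of $n$, so the union over $n\in\N$ diverges. Bennett's inequality does not rescue this either; it only gains a factor $\log(Mt/V)$ of order $\log\log n$ in the exponent. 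The obstruction is structural: once the maximal admissible jump and the deviation threshold are both of order $\log(n)/n$, no generic concentration inequality can produce a summable tail.

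The paper avoids this entirely by computing the moment generating function at the specific scale $\lambda=l/2$ and exploiting the exponential law of the $X_i$. One has the exact identity
\[
(e^{\theta_i l/2}-1)\,\proba(X_i\in[l,l+\delta])=(1-e^{-\theta_i l/2})\,\proba(X_i\in[l/2,l/2+\delta]),
\]
which yields $\E\big[e^{(l/2)\mu[l,l+\delta]}\big]\le\exp\big(\tfrac{l}{2}\,\E[\mu[l/2,l/2+\delta]]\big)$ and then, via concavity and Markov's inequality,
\[
\proba\Big(\mu[l,l+\delta]\ge \tfrac{2\delta}{l}\,\E[\mu[0,l]]+\tfrac{2t}{l}\Big)\le e^{-t},
\]
\emph{uniformly in $\delta$}. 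That uniformity is exactly what Bernstein-after-truncation cannot deliver; with it one takes $t=3\log n$ on a grid and Borel--Cantelli goes through. Your mean estimate and the grid-to-continuum extension are fine; what is missing is this exponential-MGF step in place of Bernstein.
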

\begin{proof} First let us prove a concentration inequality for $\mu[l,l+\delta]$. We have by Fubini's Theorem,
\begin{equation} \E \left [e^{\frac{l}{2}\mu[l,l+\delta] }\right ]= \E \left [e^{\frac{l}{2}\theta_0^2\delta}\prod_{i=1}^{\infty}e^{\frac{l}{2} \theta_i\1_{l\leq X_i\leq l+\delta}} \right ]=e^{\frac{l}{2}\theta_0^2\delta} \prod_{i=1}^{\infty} \left (1+  (e^{\frac{l}{2}\theta_i}-1)\proba \left (l\leq X_i\leq l +\delta \right )\right ). \label{WAFWAF}  \end{equation}
Furthermore we have for every $i\in \N$, since $X_i$ is an exponential random variable of parameter $\theta_i$,
\[ (e^{\frac{l}{2}\theta_i}-1)\proba \left (l\leq X_i\leq l +\delta \right )=(1-e^{-\frac{l}{2}\theta_i})\proba \left ( \frac{l}{2} \leq X_i\leq \frac{l}{2} +\delta \right )\leq \frac{l}{2}\theta_i\proba \left ( \frac{l}{2} \leq X_i\leq \frac{l}{2} +\delta \right ), \]
Therefore by \eqref{WAFWAF} and \eqref{107},
\begin{align} \E \left [e^{\frac{l}{2} \mu[l,l+\delta]}\right ] 
 & \leq  \exp \left (\frac{l}{2}\theta_0^2\delta+ \sum_{i=1}^{\infty} \frac{l}{2}  \theta_i \proba \left ( \frac{l}{2} \leq X_i\leq \frac{l}{2} +\delta\right ) \right ) \notag
\\ & =  \exp \left ( \frac{l}{2} \E \left [  \mu \left [\frac{l}{2},\frac{l}{2}+\delta\right ] \right ] \right ).\label{15101}
\end{align}
Moreover by Lemma \ref{2}, $t\mapsto \E[\mu[0,t]]$ is concave and increasing, hence,
\begin{equation} \E \left [  \mu \left [\frac{l}{2},\frac{l}{2}+\delta\right ] \right ] \leq \frac{2\delta}{l}\E \left [  \mu \left [0,\frac{l}{2}\right ] \right ] \leq  \frac{2\delta}{l} \E \left [  \mu \left [0,l\right ] \right ].\label{15102} \end{equation}
Finally it follows from Markov's inequality, \eqref{15101}, and \eqref{15102}  that for every $l,l',t\in \R^+$,
\begin{equation} \proba \left ( \mu[l,l+\delta ] \geq\frac{2\delta}{l}\E \left [  \mu \left [0,l\right ] \right ] +\frac{2t}{l}\right )\leq e^{-t}. \label{1310} \end{equation}

We now derive the desired result from \eqref{1310}. First by the Borel--Cantelli Lemma, there exists almost surely an $N\in \N$, such that for every $n\geq N$ and $n\leq m \leq 8n$,
\[  \mu[\sqrt{n},\sqrt{m}] \leq 2\frac{(\sqrt{m}-\sqrt{n})}{\sqrt{n}}\E \left [  \mu \left [0,\sqrt{n}\right ] \right ]+\frac{6\log (n)}{\sqrt{n}}. \]

Now fix $l\geq N+10$, $0\leq \delta \leq l$ then let $n:=\max\{i, \sqrt{i}\leq l\}$ and let $m:= \min\{i,  l+\delta\leq \sqrt{i}\}$. Since $l\mapsto \mu[0,l]$ is non decreasing, we have,
\begin{align} \mu[l,l+\delta]\leq \mu[\sqrt{n},\sqrt{m}] & \leq 2\frac{\sqrt{m}-\sqrt{n}}{\sqrt{n}}\E \left [  \mu \left [0,\sqrt{n}\right ] \right ]+12\frac{\log (\sqrt{n})}{\sqrt{n}} \notag
\\  & \leq 2 \frac{\delta+2/l}{l-1/l} \E \left [  \mu \left [0,l\right ] \right ]+12\frac{\log (l)}{l} . \label{1510manger}\end{align}
Finally by Lemma \ref{2}, $\E \left [  \mu \left [0,l\right ] \right ]=O(l)$ as $l\to \infty$ and the desired result follows from \eqref{1510manger}. 
\end{proof}
\subsection{Key results on cuts and sticks}
\begin{lemma} \label{8} 
Almost surely there exists $L_0\in \R^+$ such that for every $l\geq L_0$ there are at most $2l\mu[0,l]\leq 2l^2$ cuts on $[0,l]$.
\end{lemma}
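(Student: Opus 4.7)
The plan is to exploit the conditional Poisson structure of the cuts. Conditionally on $\mu$, the process $(Y_i)_{i\in\N}$ is Poisson on $\R^+$ with intensity $\mu[0,t]\,dt$, so the counting function $N_l:=\#\{i\geq 1:Y_i\leq l\}$ is, conditionally on $\mu$, Poisson distributed with mean $\lambda_l:=\int_0^l \mu[0,t]\,dt$. The bound $\lambda_l\leq l\mu[0,l]$ is immediate from the monotonicity of $t\mapsto \mu[0,t]$, and the second inequality $2l\mu[0,l]\leq 2l^2$ is precisely the remark following Lemma~\ref{5}. So only the bound $N_l\leq 2l\mu[0,l]$ requires work.

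First I would apply a standard Chernoff estimate for the Poisson distribution, namely $\proba(\mathrm{Poisson}(\lambda)\geq 2\lambda)\leq e^{-c\lambda}$ for some absolute constant $c>0$ (one may take $c=2\log 2 -1$), conditionally on $\mu$. This yields $\proba(N_n\geq 2n\mu[0,n]\mid\mu)\leq e^{-c\lambda_n}$ for every integer $n$. Since $\lambda_n\geq (n/2)\mu[0,n/2]$ by monotonicity, and since Lemmas~\ref{2} and~\ref{5} imply that $\mu[0,l]\to\infty$ almost surely (using either $\theta_0\neq 0$ or $\sum\theta_i=\infty$ from the definition of $\Omega$), we have almost surely that $\lambda_n$ grows much faster than $\log n$; in particular $\sum_n e^{-c\lambda_n}<\infty$ almost surely. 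The conditional Borel--Cantelli lemma then gives that almost surely, $N_n\leq 2n\mu[0,n]$ for all integers $n$ larger than some (random) $N$.

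To extend this bound to all real $l\geq N$, set $n:=\lfloor l\rfloor$ and use monotonicity to write $N_l\leq N_{n+1}\leq 2(n+1)\mu[0,n+1]$. The main technical point is to compare this to $2l\mu[0,l]$. By Lemma~\ref{6}, $\mu[l,n+1]=O(\log l/l)$, so $\mu[0,n+1]=\mu[0,l](1+o(1))$ and $(n+1)/l=1+o(1)$; hence $(n+1)\mu[0,n+1]=l\mu[0,l](1+o(1))$ almost surely. To absorb this loss I would rerun the Borel--Cantelli step with the constant $2$ replaced by, say, $3/2$ along the integers, so that $N_l\leq (3/2)(n+1)\mu[0,n+1]\leq 2l\mu[0,l]$ for all $l$ large enough.

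The main obstacle is the interpolation from integers to arbitrary $l$, since both sides of the inequality depend on $l$; but the sharp increment estimate in Lemma~\ref{6}, combined with the fact that $l\mu[0,l]$ grows like $l^2$, makes the deviation introduced by rounding $l$ down to an integer entirely negligible.
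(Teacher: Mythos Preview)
Your proposal is correct and follows essentially the same route as the paper: a Poisson tail bound conditionally on $\mu$, Borel--Cantelli along the integers with the sharper constant $3/2$, and then an interpolation to all real $l$ using the increment estimates of Lemmas~\ref{2}, \ref{5}, \ref{6}. The only cosmetic differences are that the paper works directly with the dominating Poisson of mean $l\mu[0,l]$ and a cruder $1/l^2$ tail bound rather than your exact mean $\lambda_l$ and Chernoff estimate, and that the paper leaves the interpolation step implicit while you spell it out.
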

\begin{proof} 
Conditionally on $\mu$, $\{Y_i\}_{i\in \N}$ is a Poisson point process with rate $\mu[0,l]dl$ so the number of cuts in $[0,l]$ is stochastically dominated by a Poisson random variable $\alpha$ with mean $l\mu[0,l]$ and for $l$ large enough
\[ \proba \left (\alpha\geq \frac 3 2 l\mu[0,l] \right)\leq \frac{1}{l^2}. \]
Thus by the Borel--Cantelli lemma almost surely for every $l\in \N$ large enough, there are at most $\frac{3}{2}l\mu[0,l]$ cuts on $[0,l]$. This can be easily extended to all $l\in \R^+$ large enough using Lemmas \ref{2} and \ref{5}. We omit the straightforward details.
\end{proof}
\begin{lemma} \label{9} 
Almost surely there exists $i_0\in \N$ such that for every $i\geq i_0$:
\[ l_{i+1} \leq \frac{5\log(Y_i)}{M_i}. \]
\end{lemma}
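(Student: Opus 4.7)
My approach is a Borel--Cantelli argument based on the observation that, conditionally on $\mu$ and $Y_i$, the next stick length $l_{i+1}$ is stochastically dominated by an exponential random variable of rate $M_i$. Indeed, by Algorithm \ref{Alg3}, given $\mu$ the process $(Y_j)_{j \in \N}$ is a Poisson point process on $\R^+$ of intensity $\mu[0,s]\,ds$, so
\[ \proba\bigl(l_{i+1} > t \,\big|\, \mu, Y_i\bigr) \;=\; \exp\!\left(-\int_{Y_i}^{Y_i+t}\mu[0,s]\,ds\right) \;\leq\; e^{-tM_i}, \]
where the inequality uses that $s \mapsto \mu[0,s]$ is non-decreasing and equals $M_i$ at $s = Y_i$. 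Applying this with $t = 5\log(Y_i)/M_i$ yields the one-step estimate
\[ \proba\!\left(l_{i+1} > \tfrac{5\log(Y_i)}{M_i} \,\Big|\, \mu, Y_i\right) \;\leq\; Y_i^{-5}. \]

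To promote this to an almost-sure statement I need $\sum_i Y_i^{-5}$ to be finite almost surely. This follows immediately from Lemma \ref{8}: since the number of cuts on $[0, Y_i]$ equals $i$ and is bounded by $2 Y_i^2$ for $i$ large enough, one has $Y_i \geq \sqrt{i/2}$ eventually, so $\sum_i Y_i^{-5}$ is dominated by a constant times $\sum_i i^{-5/2} < \infty$.

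I would then close the argument via the conditional (Lévy) Borel--Cantelli lemma applied to the filtration $\mathcal{F}_i := \sigma(\mu, Y_1, \dots, Y_i)$ and the events $A_i := \{l_{i+1} > 5\log(Y_i)/M_i\}$, which are $\mathcal{F}_{i+1}$-measurable with $M_i$ and $Y_i$ being $\mathcal{F}_i$-measurable. The bounds above give $\sum_i \proba(A_i \mid \mathcal{F}_i) \leq \sum_i Y_i^{-5} < \infty$ almost surely, which forces $A_i$ to occur only finitely often; equivalently, there almost surely exists $i_0$ such that $l_{i+1} \leq 5\log(Y_i)/M_i$ for every $i \geq i_0$. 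There is no serious obstacle here: the only care required is to set up the filtration so that the exponential domination and the adaptedness of $A_i$ both hold, all quantitative input being provided by Lemma \ref{8}.
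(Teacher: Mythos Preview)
Your proposal is correct and follows essentially the same approach as the paper: exponential domination of $l_{i+1}$ by $\Exp(1/M_i)$ coming from the Poisson structure, combined with a Borel--Cantelli argument and Lemma \ref{8} to relate $i$ and $Y_i$. The only cosmetic difference is that the paper picks the deterministic threshold $2\log i$ (so that $\proba(l_{i+1}M_i\geq 2\log i)\leq i^{-2}$ and ordinary Borel--Cantelli applies), and then converts $\log i$ to $\log Y_i$ via $i\leq 2Y_i^2$, whereas you work directly with the random threshold $5\log(Y_i)/M_i$ and invoke the conditional Borel--Cantelli lemma.
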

\begin{proof}
Because the cuts are made at rate $\mu[0,l]dl$, for every $i\in \N$, $(Y_{i+1}-Y_i)\mu[0,Y_i]$ is stochastically dominated an exponential random variable with mean one. Therefore
\[ \proba \left ((Y_{i+1}-Y_{i})\mu[0,Y_i]  \geq 2\log(i)\right )\leq 1/i^2. \]
So by the Borel--Cantelli lemma and Lemma \ref{8}, for every $i$ large enough,
\[ Y_{i+1}-Y_i \leq \frac{2\log(i)}{\mu[0,Y_i]} \leq \frac{2\log(2Y_i^2)}{\mu[0,Y_i]} \leq \frac{5\log(Y_i)}{\mu[0,Y_i]}. \qedhere  \]  
\end{proof}
\begin{lemma} \label{10} 
Almost surely there exists $L_0\in \R^+$ such that for all $l\geq L_0$ and $i\in \N$ with $Y_i\geq l$,
\[ m_{i+1}\leq \frac{\log^2{l}}{l}.\]
\end{lemma}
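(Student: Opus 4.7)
The plan is to combine the length bound on the next stick (Lemma \ref{9}) with the density estimate for $\mu$ (Lemma \ref{6}): this converts a bound on $l_{i+1}$ into a bound on $m_{i+1}$, which I then transfer from $Y_i$ down to $l$ using the monotonicity of $x\mapsto \log(x)/x$ on $[e,\infty)$.

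Concretely, Lemmas \ref{9} and \ref{5} give, for $i$ larger than some almost surely finite $i_0$, both $l_{i+1}\leq 5\log(Y_i)/M_i$ and $M_i\geq \tfrac12 \E[\mu[0,Y_i]]$, hence
\[ l_{i+1}\leq \frac{10\log Y_i}{\E[\mu[0,Y_i]]}. \]
Since $Y_i\to\infty$ and $\E[\mu[0,Y_i]]\to\infty$ (under the standing assumption on $\Theta$), this quantity is in particular $\leq Y_i$ for $i$ large, so Lemma \ref{6} may be applied with the scale $Y_i$ and increment $l_{i+1}$, yielding
\[ m_{i+1}=\mu(Y_i,Y_{i+1}]\leq \mu[Y_i,Y_i+l_{i+1}]\leq \frac{2 l_{i+1}\E[\mu[0,Y_i]]}{Y_i}+\frac{13\log Y_i}{Y_i}\leq \frac{33\log Y_i}{Y_i}. \]
I then set $L_0:=\max(Y_{i_0},e^{33})$, which is almost surely finite. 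Given any $l\geq L_0$ and any $i$ with $Y_i\geq l$, monotonicity of $(Y_n)$ forces $i\geq i_0$, so the previous display applies; moreover $Y_i\geq l\geq e$ together with the decrease of $x\mapsto \log(x)/x$ on $[e,\infty)$ gives $\log(Y_i)/Y_i\leq \log(l)/l$. Hence $m_{i+1}\leq 33\log(l)/l\leq \log^2(l)/l$ since $\log l\geq 33$.

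I do not anticipate any conceptual difficulty here: the lemma is essentially a direct bookkeeping consequence of the tools already developed in Section \ref{Section 4}. The only step that is not completely automatic is checking the hypothesis $\delta\leq l$ in the application of Lemma \ref{6}, which is what forced me to first argue that $l_{i+1}=o(Y_i)$ via the divergence of $\E[\mu[0,Y_i]]$.
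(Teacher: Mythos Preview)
Your proof is correct and follows exactly the same route as the paper: bound $l_{i+1}$ via Lemma~\ref{9}, feed it into the density estimate Lemma~\ref{6} (using Lemma~\ref{5} to replace $M_i$ by $\E[\mu[0,Y_i]]$), and then pass from $Y_i$ to $l$ by monotonicity of $x\mapsto \log x/x$. If anything you are more careful than the paper, which compresses the argument into a single displayed line, contains a typo ($l_{i+1}$ in place of $m_{i+1}$), and leaves the check $\delta\leq l$ and the final monotonicity step implicit.
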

\begin{proof}
We have by Lemmas \ref{9}, \ref{5}, and \ref{6}, as $i\to \infty$,
\[ l_{i+1}\leq \mu\left [Y_i, Y_i+\frac{5 \log Y_i}{\mu[0,Y_i]} \right ] \leq O\left ( \frac{ \log Y_i}{\mu[0,Y_i]} \frac{\mu[0,Y_i]}{Y_i} +\frac{\log Y_i}{Y_i} \right )=o\left ( \frac{\log^2 Y_i}{Y_i}\right ).\qedhere \]
\end{proof}
\subsection{An estimate of distances in $\T$}
\begin{definition*} For every random variables $A$, $B$ on $\R$ we recall that $A$ is stochastically dominated by $B$ if and only if for every $t\in \R^+$, $\proba(A\geq t)\leq \proba(B\geq t)$. In this case we write $A\leq_{\st} B$. Also for every $l\in \R^+$, let $\Exp(l)$ denotes an exponential random variable of mean $l$. 
\end{definition*}
\begin{lemma} \label{pizza} For every $x,y \in \R^+$, conditionally on $\mu$,  $d(\T_x,y)\leq_{\st} \Exp  (\frac{4}{\mu[0,x]} )$.
\end{lemma}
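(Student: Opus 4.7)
The plan is to upper bound $d(\T_x,y)$ by the length of an explicit path in $\T$, then bound that length stochastically via Poisson thinning.

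One may assume $y > x$, for otherwise $y \in \T_x$ and the claim is trivial. I would construct a ``descend and teleport'' walk as follows. Set $w_0 := y$; at step $k$ let $Y_{j_k} := \max\{Y_i : Y_i < w_k\}$ (with the convention $Y_0 = 0$). If $Y_{j_k} < x$, the walker descends the current stick a distance $w_k - x$ to the point $x \in \T_x$ and terminates; otherwise the walker descends a distance $w_k - Y_{j_k}$ and is teleported to $w_{k+1} := Z_{j_k}$, terminating if $Z_{j_k} \leq x$. Since $(w_k)$ is strictly decreasing and takes values in $\{y\} \cup \{Z_i\}_{i \in \N}$, the walk terminates almost surely at some point of $\T_x$, and its total length $T$ is, by construction, an upper bound on $d(\T_x, y)$.

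To bound $T$ stochastically, recall from the proof of Lemma \ref{equiv} that conditionally on $\mu$, $\{(Y_i, Z_i)\}_{i \in \N}$ is a Poisson point process on $\{(a,b) \in \R^{+2} : b \leq a\}$ of intensity $da \otimes \mu(db)$. Poisson thinning splits this into the \emph{good} cuts $G := \{Y_i : Z_i \leq x\}$ and the \emph{bad} cuts $B := \{(Y_i, Z_i) : Z_i > x\}$, which are independent; integrating the intensity over $b \in [0, x]$ shows that $G$ restricted to $[x, \infty)$ is a homogeneous Poisson process of rate $\mu[0, x]$.

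Now let $V \subset (x, y]$ denote the walked set one would obtain by running the above walk using \emph{only} bad cuts, ignoring good cuts entirely: this is a measurable function of $B$ alone, and hence independent of $G$. The original walk coincides with this ``bad-cut-only'' walk up to the first good cut met along the way; parametrizing by walked distance identifies $[0, |V|]$ with $V$, and $T$ is then the minimum of $|V|$ and the walked distance to the first element of $G \cap V$. Conditionally on $B$, the Poisson thinning above shows that $G \cap V$ is, under this parametrization, a homogeneous Poisson process of rate $\mu[0, x]$ on $[0, |V|]$, so the walked distance to the first good cut in $V$ is stochastically dominated by $\Exp(1/\mu[0, x])$. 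Hence
\[ d(\T_x, y) \leq T \leq_{\st} \Exp\!\left(\tfrac{1}{\mu[0, x]}\right) \leq_{\st} \Exp\!\left(\tfrac{4}{\mu[0, x]}\right), \]
so that the factor $4$ in the statement leaves plenty of slack. The main delicate point is the independence of $V$ from $G$ and the fact that under the walked-distance parametrization, $G \cap V$ is a homogeneous Poisson process of rate $\mu[0,x]$; both require careful Poisson bookkeeping exploiting the product form $da \otimes \mu(db)$ of the joint intensity.
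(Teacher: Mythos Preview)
Your argument is correct and takes a genuinely different route from the paper's proof. The paper proceeds in two stages: first, under the extra hypothesis $\mu[0,x]\geq 2\mu[0,y)$, it follows the same geodesic walk you describe, but bounds it via the Markov chain $(y_i,z_i)$: at each teleport the probability of landing in $[0,x]$ is at least $1/2$, so the number of steps is dominated by a geometric variable, and each descent by an $\Exp(1/\mu[0,x])$; this gives $d(\T_x,y)\leq_{\st}\Exp(2/\mu[0,x])$ in that restricted case. The general case is then handled by a dyadic decomposition, introducing levels $x_i$ with $\mu[0,x_i]\geq 2^i\mu[0,x]$, applying the restricted bound between consecutive levels, and summing the geometric series to get the constant $4$.

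Your thinning argument bypasses both the geometric step-count and the dyadic layer-cake entirely. By colouring the Poisson point process $\{(Y_i,Z_i)\}$ according to $\{Z_i\leq x\}$ versus $\{Z_i>x\}$, you exploit the product form $da\otimes\mu(db)$ of the intensity to obtain independence of the good and bad cuts, which is exactly what the paper's Markov-chain argument does not use. The payoff is a one-shot bound $d(\T_x,y)\leq_{\st}\Exp(1/\mu[0,x])$ with no loss of constant, since $T=\min(|V|,D_1)$ where $D_1$ is the first point of a rate-$\mu[0,x]$ Poisson process on $[0,|V|]$. The paper's approach, on the other hand, is more robust: it would survive in settings where the cuts form a Markov process but not a Poisson point process (for instance discrete approximations), since it only uses the conditional law of each $Z_i$ given $Y_i$ rather than the full product structure.

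Your bookkeeping is sound: the walked intervals $(Y_{b_{k+1}},Z_{b_k}]$ are disjoint (since $Y_{b_{k+1}}<Z_{b_k}\leq Y_{b_k}$), so the walked-distance parametrization is Lebesgue-preserving; $V\subset(x,y]$ and is $\sigma(B)$-measurable, so conditionally on $B$ the restriction $G\cap V$ is indeed a homogeneous PPP of rate $\mu[0,x]$; and the walk terminates in finitely many steps because the $w_k$ are strictly decreasing in the finite set $\{y\}\cup\{Z_i:Y_i<y\}$.
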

\begin{remark}
Proving an equivalent of Lemma \ref{pizza} is crucial for each studies on stick-breaking constructions, notably for compactness \cite{Curien,Seni}  and convergence \cite{Aldous1,Uniform}. Although the proof presented below use strong property on $\mu$, more general methods can be found in \cite{Aldous1,Uniform,Curien,Seni}.  Finally, we believe that such methods can be useful for the study of several other classes of algorithms.
\end{remark}
\begin{proof} To simplify the notation let for every $l\in \R^+$, $\mathcal{F}_l:=\sigma \big (\mu, \big \{(Y_i,Z_i)\big \}_{i\in \N}\cap [l,+\infty]\times \R^+ \big )$. We first prove that if $\mu[0,x]\geq 2\mu[0,y)$ then conditionally on $\mathcal{F}_y$, $d(\T_x,y)\leq_{\st}\Exp (\frac{2}{\mu[0,x]})$. If $y\leq x$ then $d(\T_x,y)=0$. We assume henceforth that it is not the case. Let us "follow" the geodesic path from $y$ to $\T_x$. More precisely we define the following sequence by induction (see Figure \ref{ImageProof}). Let $z_0:= y$, then for every $i\geq 0$, let $k_i:=\max\{k\in \N: Y_{k}<z_i\}$ and let $y_i:=Y_{k_i}$, and $z_{i+1}:=Z_{k_i}$. Additionaly let $T$ denotes the smallest integer such that $z_{T+1}\leq x$. Note that 
\begin{equation} d(y,\T_x)=\sum_{i=0}^{T} \big (z_i-\max(y_i,x)\big). \label{07100} \end{equation}
 \begin{figure}[!h]  \label{ImageProof}
\centering
\includegraphics[scale=0.6]{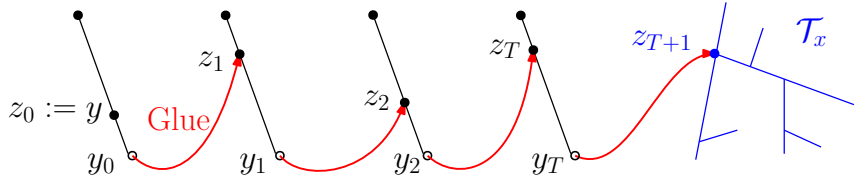}
\caption{A typical construction of $(y_i,z_i)_{i\in \N}$. Note that in general we do not know if $y_T\in \T_x$.} \label{ImageProof}
\end{figure}

  Now recall that conditionally on $\mu$, $\{(Y_i,Z_i),i\in \N\}$ is a Poisson point process, so $\{(y_i,z_i),\mathcal{F}_{y_i}\}_{i\geq 0}$ is a Markov chain. Also note that $T+1=\inf\{n: z_n<x\}$ is a stopping time for $\{(y_i,z_i),\mathcal{F}_{y_i}\}_{i\geq 0}$. 
Moreover, for every $i\in \N$ conditionally on $(\mu,y_i,z_i)$, $z_{i+1}$ has law $p_{y_i}$. Hence if $y_i\geq x$,
 \[ \proba (z_{i+1}\leq x | \mu,y_i,z_i ) =p_{y_i}[0,x] \geq \frac{\mu[0,x]}{\mu[0,y)}\geq \frac{1}{2}. \]
So $T$ is stochastically dominated by a geometric random variable of parameter $1/2$. Furthermore, if $i\leq T$, conditionally on $(\mu,y_i)$, $\{Y_i\}_{i\in \N}\cap[x,y_i)$ is a Poisson point process of rate $\mu[0,l]\geq \mu[0,x]$ so $z_i-\max(y_i,x)\leq_{\st}\Exp(\frac{1}{\mu[0,x]})$. Finally it follows from \eqref{07100} that $d(\T_x,y)\leq_{\st} \Exp(\frac{2}{\mu[0,x]}  ).$

Let us now treat the general case. As previously, we bound $d(\T_x,y)$ by following the geodesic path between $\T_x$ and $y$. More precisely, let for every $i\geq 0$, $x_i:=\inf\{a\in \R^+,\mu[0,a]\geq 2^i\mu[0,x]\}$ and let $y_i$ be the nearest point from $y$ on $[0,x_i]$. Note that 
\begin{equation} d(x,y)=\sum_{i=0}^{+\infty} d(y_{i},y_{i+1}). \label{24102} \end{equation}
Then for every $i\geq 0$, since $2\mu[0,x_i]\geq \mu[0,x_{i+1})$, the first case yields, conditionally on $\mathcal{F}_{y_{i+1}}$, 
\begin{equation} d(y_i,y_{i+1})=d(\T_{x_i},y_{i+1})\leq_{\st} \Exp \left (\frac{2}{\mu[0,x_i]} \right )\leq_{\st} \Exp \left (\frac{2^{1-i}}{\mu[0,x]} \right ). \label{24103} \end{equation}
Finally since for every $j>i$, $d(y_j,y_{j+1})$ is $\mathcal{F}_{y_{i+1}}$ measurable, it follows from \eqref{24102} and \eqref{24103} that 
$ d(x,y)\leq_{\st} \Exp (\frac{4}{\mu[0,x]})$.
 \end{proof}%
\section{The mass measure}  \label{Section 5}
 First we prove Lemma \ref{E=MC2} that describes precisely the evolution of the mass $\mu$ as we add branches to the tree. Then we prove that $(p_l)_{l\geq 0}$ is tight and use Lemma \ref{E=MC2} to prove that for every bounded Lipschitz function $(p_l(f))_{l\geq 0}$ converges.  It proves, by the Portmanteau Theorem, that $(p_l)_{l\geq 0}$ converges weakly toward a probability measure $p$ (Theorem \ref{THM1}). Then we adapt the argument to prove Proposition \ref{other}.
%
%
\subsection{The mass conservasion lemma} \label{5..2}
\begin{definition*} For every $l\in \R^+$ let the projection of $x$ in $\T_l$ be the nearest point from $x$ in $\T_l$. Also for every $S\subset\T$, let $S^{\uparrow l}$ be the set of $x\in\T$ such that the projection of $x$ in $\T_l$ is in $S$.
\end{definition*}
\begin{lemma}  \label{E=MC2} 
Almost surely $(\mu,(Y_i)_{i\in\N})$  satisfy the following property. For every $a$ large enough, conditionally on $\T_{Y_a}$, for every measurable set $S\subset \T_{Y_a}$, the following assertions hold.
\begin{compactitem}
\item[(i)] Almost surely $\{ p_l(S^{\uparrow Y_a} ) \}_{l\in \R^+}$ converges toward a real number $p(S^{\uparrow Y_a} )$.
\item[(ii)] If $\mu(S)\geq \frac{\log^6 Y_a}{Y_a}$ with probability at least $1-\frac{1}{Y_a^5}$, for every $l\geq Y_a$
\[  \left (1-\frac 1 {\log Y_a} \right )p_{Y_a} (S) \leq p_{l} \left (S^{\uparrow  Y_a} \right ) \leq  \left (1+\frac 1 {\log Y_a} \right) p_{Y_a} (S).\]
\item[(iii)] If $\mu(S)\leq \frac{\log^6 Y_a}{Y_a}$ with probability at least $1-\frac{1}{Y_a^5}$, for every $l\geq Y_a$
\[ p_{l} \left (S^{\uparrow  Y_a} \right ) \leq  \frac{\left (\log Y_a\right )^{6}}{Y_aM_a}.\] 
\end{compactitem} 
\end{lemma}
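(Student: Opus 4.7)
The plan is to identify a martingale structure for the process $n \mapsto p_{Y_n}(S^{\uparrow Y_a})$, then use concentration inequalities.

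Fix $a$ large enough for Lemmas \ref{6}, \ref{9}, \ref{10} to apply, and condition on $\mathcal{G}_a := \sigma(\mu, (Y_i)_{i \in \N}, \T_{Y_a})$. Let $S \subset \T_{Y_a}$ be $\mathcal{G}_a$-measurable. For $n \geq a$ set $Q_n := p_{Y_n}(S^{\uparrow Y_a})$; note $Q_a = \mu(S)/M_a$. The key observation is that $(Q_n)_{n \geq a}$ is a bounded $[0,1]$-valued martingale conditionally on $\mathcal{G}_a$: by Algorithm \ref{Alg3}, at step $n > a$ the new stick $(Y_{n-1}, Y_n]$ is glued at $Z_{n-1} \sim p_{Y_{n-1}}$, so it joins $S^{\uparrow Y_a}$ with conditional probability exactly $Q_{n-1}$, yielding
\[
Q_n - Q_{n-1} \;=\; \frac{m_n}{M_n}\bigl(\mathbf{1}_{Z_{n-1} \in S^{\uparrow Y_a}} - Q_{n-1}\bigr).
\]
Bounded martingale convergence gives $Q_n \to Q_\infty \in [0,1]$ almost surely, which I identify with $p(S^{\uparrow Y_a})$. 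Between cuts, $p_l(S^{\uparrow Y_a})$ is a monotone interpolation between $Q_{n-1}$ and $Q_n$ for $l \in (Y_{n-1}, Y_n]$, extending convergence to all $l \in \R^+$ and proving (i).

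For (ii) and (iii) I turn to quantitative fluctuations. The $n$-th conditional variance is $(m_n/M_n)^2 Q_{n-1}(1 - Q_{n-1})$ and $|Q_n - Q_{n-1}| \leq m_n/M_n$. Using Lemma \ref{10} at $l = Y_a$ (so $m_{i+1} \leq \log^2 Y_a/Y_a$ for every $i \geq a$) together with the telescoping bound $\sum_{n > a}(M_n - M_{n-1})/M_n^2 \leq 1/M_a$, I get the deterministic estimate
\[
\sum_{n > a} \Bigl(\frac{m_n}{M_n}\Bigr)^2 \;\leq\; \Bigl(\max_{n > a} m_n\Bigr) \sum_{n > a} \frac{M_n - M_{n-1}}{M_n^2} \;\leq\; \frac{\log^2 Y_a}{Y_a M_a}.
\]
A direct computation of $\E[Q_n(1-Q_n)\,|\,\mathcal{G}_a] = Q_a(1-Q_a) \prod_{k=a+1}^{n}(1 - (m_k/M_k)^2)$ then gives the variance formula $\text{Var}(Q_\infty | \mathcal{G}_a) = Q_a(1-Q_a)\,[1 - \prod_{n > a}(1 - (m_n/M_n)^2)] \leq Q_a \log^2 Y_a/(Y_a M_a)$, so the expected predictable quadratic variation obeys the same bound.

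Under the hypothesis of (ii), $Q_a \geq \log^6 Y_a/(Y_a M_a)$, hence
\[
\frac{(Q_a/\log Y_a)^2}{Q_a \log^2 Y_a/(Y_a M_a)} \;=\; \frac{Q_a\,Y_a M_a}{\log^4 Y_a} \;\geq\; \log^2 Y_a,
\]
so the target deviation is at least $\log Y_a$ "standard deviations" of $Q_\infty - Q_a$. Applying Freedman's martingale inequality, after a preliminary bootstrap — a crude control on $\sup_n Q_n$ via Doob's maximal inequality pins the realized predictable variation to $O(Q_a \log^2 Y_a/(Y_a M_a))$ on an event of probability $\geq 1 - 1/(2 Y_a^5)$ — yields a Gaussian-tail bound of order $2\exp(-c \log^2 Y_a) \leq 1/Y_a^5$ for $a$ large. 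Monotone interpolation between cuts transfers this to all $l \geq Y_a$, giving (ii). Assertion (iii) follows from the same machinery: when $\mu(S) \leq \log^6 Y_a/Y_a$, $Q_a \leq \log^6 Y_a/(Y_a M_a)$ and the identical concentration, applied to $\sup_{n \geq a}(Q_n - Q_a)^+$, confines $\sup_n Q_n$ to $\log^6 Y_a/(Y_a M_a)$ with the required probability.

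The principal obstacle lies in this concentration step: the predictable quadratic variation depends on the path through $Q_{n-1}(1-Q_{n-1})$, so plugging only the deterministic bound $\log^2 Y_a/(Y_a M_a)$ into Freedman gives a subexponential tail — insufficient for $1/Y_a^5$. The bootstrap through Doob is essential, and the $\log^6$ exponent in the hypotheses of (ii) and (iii) is precisely calibrated to absorb the iterated logarithmic losses produced by this bootstrap.
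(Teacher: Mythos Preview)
Your approach coincides with the paper's at the structural level: both recognize that $(A_n/M_n)_{n\geq a}$ with $A_n=\mu_{Y_n}(S^{\uparrow Y_a})$ is a generalized P\'olya urn driven by the weights $(m_n)$, hence a $[0,1]$-martingale, and both derive (i)--(iii) from a Bernstein-type maximal concentration bound combined with the increment control $m_n\leq \log^2 Y_a/Y_a$ from Lemma~\ref{10}. The paper packages the concentration step as a standalone appendix result (Lemma~\ref{P\'olya}), proved by a direct inductive computation of $\E[e^{\lambda A_n/M_n}]$ that absorbs the self-normalizing variance $Q_{n-1}(1-Q_{n-1})$ automatically and yields
\[
\proba\Big(\sup_{n\geq a}\Big|\tfrac{A_n}{M_n}-\tfrac{A_a}{M_a}\Big|>t\,\tfrac{A_a}{M_a}\Big)\leq 2\exp\Big(-\tfrac{t^2}{8}\,\tfrac{A_aY_a}{\log^2 Y_a}\Big)
\]
in one stroke, with no bootstrap needed; plugging $t=1/(2\log Y_a)$ gives (ii) immediately.

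Your Freedman-plus-bootstrap reaches the same endpoint but the bootstrap as written has a gap. The $L^p$ Doob maximal inequalities, applied with your variance estimate $\Varr(Q_\infty\mid\mathcal G_a)\leq Q_a\log^2 Y_a/(Y_aM_a)$, give only a polynomial tail $\proba(\sup_n Q_n\geq KQ_a)=O(K^{-2})$, which is nowhere near $1/(2Y_a^5)$. The fix is to use Freedman itself for the bootstrap: at the first time $\tau$ where $|Q_n-Q_a|\geq Q_a$ one still has $\langle Q\rangle_\tau\leq 2Q_a\sum_{n>a}(m_n/M_n)^2$ (since $Q_{k-1}\leq 2Q_a$ for $k\leq\tau$), so the stopped form of Freedman already gives $\proba(\tau<\infty)\leq 2\exp(-c\log^4 Y_a)$; on $\{\tau=\infty\}$ the refined bound then follows. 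In other words, the self-bounding structure of $\langle Q\rangle$ lets a single careful application of Freedman reproduce the paper's P\'olya-urn lemma, and a separate Doob step is neither needed nor sufficient.
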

\begin{proof} First for every $i\geq a$, let $A_i:= \mu_{Y_i} \left (S^{\uparrow  Y_a} \right )$ and $\F_i:= \sigma \left (\mu, \{Y_n\}_{n\in \N}, \{Z_n\}_{1 \leq n < i} \right )$ . Note that for every $i\geq a$, since $Z_{i}$ has law $\frac{\mu_{Y_i}}{M_i}$, we have $(Y_i,Y_{i+1}]\subset S^{\uparrow  Y_a}$ with probability $\frac{A_i}{M_i}$ so
\[ \proba \left (A_{i+1}=A_i+m_{i+1} \right)=\frac{A_i}{M_i} \quad ; \quad \proba \left (A_{i+1}=A_i \right )=\frac{M_i-A_i}{M_i} .\]
Thus $(A_i,\F_i)_{i\geq a}$ can be seen as a P\'olya urn in the sense of Lemma \ref{P\'olya}. Furthermore by Lemma \ref{10}, we have almost surely for every $a$ large enough, $\max_{n>a} m_n\leq  \frac{\log^2 Y_a}{Y_a}$, hence by Lemma \ref{P\'olya} (b), for every $t\in [0,1]$,
\begin{align}  \proba \left ( \left . \sup_{i\geq a} \left | \frac{A_i}{M_i} - \frac{A_a}{M_a} \right  | >   t\frac{A_a}{M_a}  \right | A_a \right )
 \leq  2 \exp \left (-\frac{t^2}{8}\frac{A_a Y_a}{\log^2 Y_a} \right  )  .
\label{2610} \end{align}

Also still by Lemma \ref{10} we have for every $a\in \N$ large enough, $i\geq a$, and $Y_i\leq l \leq Y_{i+1}$,
\[p_l(S^{\uparrow Y_a})=\frac{\mu_l(S^{\uparrow Y_a})}{\mu[0,l]}\leq \frac{\mu_{Y_i}(S^{\uparrow Y_a})+m_{i+1}}{\mu[0,Y_i]} = \frac{A_i}{M_i}+\frac{m_{i+1}}{M_i}\leq \frac{A_i}{M_i}+\frac{\log^2 Y_a}{Y_aM_a}, \]
and similarly 
\[p_l(S^{\uparrow Y_a})=\frac{\mu_l(S^{\uparrow Y_a})}{\mu[0,l]}\geq \frac{\mu_{Y_{i+1}}(S^{\uparrow Y_a})-m_{i+1}}{\mu[0,Y_{i+1}]} = \frac{A_{i+1}}{M_{i+1}}-\frac{m_{i+1}}{M_{i+1}}\geq \frac{A_{i+1}}{M_{i+1}}-\frac{\log^2 Y_a}{Y_aM_a}. \]
Therefore,
\begin{equation} \sup_{l\geq Y_a} \left |  p_l(S^{\uparrow Y_a}) - \frac{A_a}{M_a} \right  | \leq \sup_{i\geq a} \left | \frac{A_i}{M_i} - \frac{A_a}{M_a} \right  | + \frac{\log^2 Y_a}{Y_aM_a}. \label{0610} \end{equation}
The claims in (i) (ii) (iii) are applications of the inequalities in \eqref{2610} and \eqref{0610}. 

Consider first (i). Note that \eqref{2610} implies that $\{\frac{A_i}{M_i}\}_{i\in \N}$ is almost surely Cauchy, and hence converges. Furthermore $\{\frac{\log^2 Y_a}{Y_aM_a}\}_{a\in\N}$ almost surely converges to 0. (i) then follows from \eqref{0610}.

Towards (ii), we have by assumption $A_a\geq \frac{\log^6 Y_a}{Y_a}$ so if $a\geq 10$, $\frac{\log^2 (Y_a)}{Y_aM_a}\leq \frac{1}{2\log Y_a}\frac{A_a}{M_a}$.
Therefore by \eqref{0610} it suffices to estimate the right-hand side of \eqref{2610} with $t=\frac{1}{2\log Y_a}$: 
\[2 \exp \left (-\frac{t^2}{8}\frac{A_a Y_a}{\log^2 Y_a} \right  )  \leq 2 \exp \left (-\frac{1}{32\log^2 Y_a}\frac{\log^{6} Y_a}{\log^2 Y_a} \right  )=o\left (\frac{1}{Y_a^5} \right )  \]
and $(ii)$ follows. $(iii)$ can be treated similarly using \eqref{2610} with $t= \frac{\log^6 Y_a}{2Y_aA_a}$. We leave the details to the reader. This concludes the proof.
\end{proof}
\subsection{Weak convergence of $\mu_n$ : proof of Theorem \ref{THM1}}
In this section we prove Theorem \ref{THM1}. Let us start with the tightness of $(p_l)_{l\in \R^+}$ which follows from the following lemma.
\begin{lemma} \label{tight} For every $n\in \N$  let $A_n$ be the set of $x\in \T$ such that, $d(x,[0,\X_{2^n}]) \leq 8n/2^{n}$ and $B_n:= \bigcap_{m \geq n} A_m$.
The following assertions hold: 
\begin{enumerate}
\item[(i)] Almost surely for every n large enough, for every $l\geq 0$, $p_l \left (B_n \right )\geq 1-2^{-2n}$.
\item[(ii)] For every $n$ large enough $B_n$ is compact.
\end{enumerate}
\end{lemma}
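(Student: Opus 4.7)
My plan is to reduce (i) to an almost-sure bound $\sup_{l \geq 0} p_l(A_m^c) \leq \alpha^m$ for some $\alpha < 1/4$, summable in $m$, and to prove (ii) by showing that $B_n$ is closed and totally bounded in the complete space $\T$.

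For (i), write $T_m := \T_{\X_{2^m}}$ so that $A_m^c = \{x \in \T : d(x, T_m) > 8m/2^m\}$, and note $p_l(B_n^c) \leq \sum_{m \geq n} p_l(A_m^c)$ by a union bound. It thus suffices to show that almost surely, for all sufficiently large $m$, $\sup_{l \geq 0} p_l(A_m^c) \leq \alpha^m$ for some fixed $\alpha < 1/4$. The probabilistic input is Lemma \ref{pizza}: for every $y \in \R^+$, $d(y, T_m) \leq_{\st} \Exp(4/\mu[0, \X_{2^m}])$, which combined with Lemma \ref{5} gives the conditional tail $\proba(d(y, T_m) > 8m/2^m \mid \mu) \leq e^{-(2 - o(1))m}$ almost surely. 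Since $e^{-2} < 1/4$, this is the summable rate we need.

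Let $a_m := \min\{i : Y_i \geq \X_{2^m}\}$. The contribution of $l \leq Y_{a_m}$ to $p_l(A_m^c)$ is negligible, since $A_m^c \cap [0, Y_{a_m}]$ sits inside a sub-interval of the single segment $(Y_{a_m-1}, Y_{a_m}]$ whose length and mass are controlled by Lemmas \ref{9}, \ref{10}, and \ref{6}. For $l > Y_{a_m}$, I track the non-decreasing process $C_i := \mu_{Y_i}(A_m^c)$ with $C_{a_m} \approx 0$, following the P\'olya-urn scheme underlying Lemma \ref{E=MC2}: given $\mathcal{F}_i$, the glue point $Z_i$ lies in $A_m^c$ with probability $C_i/M_i$ (in which case the entire new segment enters $A_m^c$ and $C_{i+1} = C_i + m_{i+1}$), whereas when $Z_i \notin A_m^c$ the new segment contributes only if $d(Z_i, T_m) + l_{i+1} > 8m/2^m$, an event of conditional probability $\leq e^{-(2 - o(1))m}$ by Lemma \ref{pizza} together with the exponential tail of $l_{i+1}$. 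Combining the P\'olya-urn concentration inequality used in Lemma \ref{E=MC2} with $m_{i+1} \lesssim \log^2(Y_i)/Y_i$ (Lemma \ref{10}) yields $\sup_{i \geq a_m} C_i / M_i \leq \alpha^m$ with probability at least $1 - Y_{a_m}^{-5}$, and the passage from $C_i/M_i$ to $p_l(A_m^c)$ for $l \in (Y_i, Y_{i+1}]$ costs only the additive term $\log^2(Y_i) / (Y_i M_{a_m})$ as in \eqref{0610}, which is negligible. A Borel--Cantelli argument in $m$ then concludes (i). The main technical obstacle is that, unlike the pure P\'olya urn of Lemma \ref{E=MC2}, the second ``boundary-straddling'' case above gives $C_i$ an additional drift that one must show remains of order $\alpha^m$ after normalising by $M_i$; establishing this requires carefully combining Lemma \ref{pizza} for the tail of $d(Z_i, T_m)$, the exponential tail of $l_{i+1}$, and the segment bounds of Lemmas \ref{9} and \ref{10}.

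For (ii), each $A_m$ is closed as a sublevel set of the continuous function $x \mapsto d(x, \T_{\X_{2^m}})$, so $B_n$ is closed. For total boundedness, fix $\e > 0$ and choose $m \geq n$ with $8m/2^m < \e/2$. The set $\T_{\X_{2^m}}$ is compact, being the continuous image of the compact interval $[0, \X_{2^m}]$, so it admits a finite $(\e/2)$-net $F$. For any $x \in B_n \subseteq A_m$, there exist $t \in \T_{\X_{2^m}}$ with $d(x, t) \leq 8m/2^m < \e/2$ and $f \in F$ with $d(t, f) \leq \e/2$, so $d(x, f) < \e$; hence $B_n$ is totally bounded, and completeness of $\T$ yields compactness.
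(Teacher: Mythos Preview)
For part (i) your P\'olya-urn strategy is far more elaborate than necessary and contains a real gap in the ``boundary-straddling'' step. The paper bypasses the urn entirely: by Fubini, conditionally on $\mu$,
\[
\E\big[p_{\X_{2^m}}(A_n^c)\,\big|\,\mu\big]
=\int_0^{\X_{2^m}}\proba\big(x\in A_n^c\mid\mu\big)\,\frac{d\mu(x)}{\mu[0,\X_{2^m}]}
\le \exp\!\Big(-\tfrac{8n}{2^n}\cdot\tfrac{\mu[0,\X_{2^n}]}{4}\Big)=e^{-2n(1+o(1))},
\]
since Lemma~\ref{pizza} bounds $\proba(d(x,\T_{\X_{2^n}})>8n/2^n\mid\mu)$ uniformly in $x$. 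Markov plus Borel--Cantelli along the grid $l=\X_{2^m}$, an interpolation to general $l$ via $\mu[0,\X_{2^k}]/\mu[0,\X_{2^{k-1}}]\le 2+o(1)$, and a union bound over $m\ge n$ finish the job.

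The gap in your approach is the claim that, conditionally on $Z_i\in A_m$, the new segment meets $A_m^c$ with probability at most $e^{-(2-o(1))m}$. With your filtration $\F_i$ (which already contains all the $Y_j$, hence $l_{i+1}$), that conditional probability is exactly
\[
p_{Y_i}\big(\{z:\,d(z,T_m)>8m/2^m-l_{i+1}\}\big)\big/ p_{Y_i}(A_m),
\]
i.e.\ the $p_{Y_i}$-mass of a superlevel set of $d(\cdot,T_m)$ at a threshold just below $8m/2^m$. Lemma~\ref{pizza} controls $\proba(d(y,T_m)>t\mid\mu)$ for a \emph{deterministic} $y$, not the $p_{Y_i}$-mass of $\{d(\cdot,T_m)>t\}$; turning the former into the latter is precisely the Fubini identity above. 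As written your argument is circular: bounding the straddling drift requires exactly the quantity $\sup_l p_l(\{d(\cdot,T_m)>r\})$ that you are trying to establish, and neither the exponential tail of $l_{i+1}$ nor Lemma~\ref{pizza} breaks that loop.

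Your proof of (ii) is correct and arguably cleaner than the paper's diagonal-extraction argument. One minor point: the identity $([0,\X_{2^m}],|\cdot|)\to([0,\X_{2^m}],d)$ is not continuous at the cut points $Y_i$, so ``continuous image of a compact interval'' is not literally valid; however $\T_{\X_{2^m}}$ is clearly compact as a finite union of compact segments, and the rest of your total-boundedness argument goes through unchanged.
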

\begin{proof} First for every $n,m\in \N$, such that $m\geq n$,  conditionally on $\mu$ we have by Fubini's theorem, Lemma \ref{pizza}, and Lemma \ref{5},
\begin{equation*} \E \left [ \left . p_{\X_{2^m}}\left (\T\backslash A_n \right ) \right | \mu \right ] = \int_{0}^{\X_{2^m}} \proba\left( x\notin A_n \right ) \frac{d\mu(x)}{\mu[0,\X_{2^m}]} \leq e^{-\frac{8n}{2^n} \frac{\mu[0,\X_{2^{n}}]}{4}}=e^{-2n(1+o(1))}.\end{equation*}
It directly follows by Markov's inequality and the Borel--Cantelli lemma that almost surely for every $n$ large enough and $m\geq n$, \[p_{\X_{2^m}}\left (\T \backslash A_n \right ) \leq 2^{-2n-3}.\]
Therefore for every $n\in \N$ and $l\geq \X_{2^n}$, writing $k$ for the smallest integer such that $l\leq \X_{2^{k}}$ we have by Lemma \ref{5},
\[ p_{l}\left (\T \backslash A_n \right ) \leq \frac{\mu[0,\X_{2^k}] }{\mu[0,l]} p_{\X_{2^{k}}}\left (\T \backslash A_n \right ) \leq \frac{\mu[0,\X_{2^k}] }{\mu[0,\X_{2^{k-1}}]} p_{\X_{2^{k}}}\left (\T \backslash A_n \right ) \leq 2^{-2n-1}. \]
Note that the latter is also true for $l\leq \X_{2^n}$ since in this case $\T_l \subset \T_{\X_{2^n}}\subset A_n$. (i) then follows from a union bound on $n$.

Toward $(ii)$, note that $A_m$ is a closed set for $m\geq n$, so $B_n$ is a closed set as well. Therefore it suffices to show that any sequence $(x_i)_{i\in \N}$ in $B_n$ has an accumulation point. Fix $(x_i)_{i\in \N}$ then for every $m\in \N$ let $x_{i}^m$ be the projection of $x_i$ on $[0,\X_{2^m}]$. Since for every $m\in \N$, $\T_{\X_{2^m}}$ is compact, by a diagonal extraction procedure there exists an increasing function $\phi: \N \mapsto \N$ such that for every $m\in \N$, $(x_{\phi(i)}^m)_{i\in \N}$ converges. Hence, for every $m\geq n$ there exists $N\in \N$ such that for every $a,b\geq N$, $d(x_{\phi(a)}^m,x_{\phi(b)}^m )\leq 1/m $ and so 
\begin{align*} d(x_{\phi(a)},x_{\phi(b)}) \leq d(x_{\phi(a)}, x_{\phi(a)}^m)+ d(x_{\phi(a)}^m, x_{\phi(b)}^m)+d(x_{\phi(b)}^m, x_{\phi(b)})\leq \frac{8m}{2^m}+\frac{1}{m}+\frac{8m}{2^m} .\end{align*}
Therefore $(x_{\phi(i)})_{i\in \N}$ is Cauchy and thus converges since $\T$ is complete by definition. Since $(x_i)_{i\in \N}$ is arbitrary,  $B_n$ is compact.
\end{proof}
\begin{definition*}Let $\F$ be the set of positive, 1-Lipschitz functions that are bounded by 1 on $\T$.
For every finite measure $\nu$ on $\T$ and measurable function $f:\T\to \R$ let $\nu(f):=\int_{\T} f(x) d\nu(x)$. \end{definition*}
\begin{lemma} \label{manteau} Almost surely, for every $f\in \F$, $p_l(f)\limit p(f)$ as $l\to \infty$.
\end{lemma}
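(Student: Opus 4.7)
The plan is to reduce convergence of $p_l(f)$ to convergence of a finite linear combination of masses of ``upward tubes'' $S^{\uparrow Y_a}$, to which Lemma \ref{E=MC2}(i) applies. Fix $\epsilon > 0$. By Lemma \ref{tight}, on an almost sure event we may choose $n$ with $8n/2^n < \epsilon$ and $2^{-2n} < \epsilon$, so that $p_l(\T \setminus B_n) \leq \epsilon$ for all $l$ large enough, while every $x \in B_n$ lies within distance $\epsilon$ of $\T_{\X_{2^n}}$.

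Pick $a \in \N$ large enough that $Y_a \geq \X_{2^n}$, and partition the compact $\R$-tree $\T_{Y_a}$ into finitely many measurable pieces $S_1, \ldots, S_k$ of diameter at most $\epsilon$, with representatives $x_j \in S_j$. Let $\pi_a$ denote projection onto $\T_{Y_a}$ and set $\tilde f := \sum_{j=1}^k f(x_j)\,\1_{S_j}$. Since $f$ is $1$-Lipschitz and bounded by $1$, a triangle inequality yields $|f - \tilde f \circ \pi_a| \leq 2\epsilon$ on $B_n$ and $\leq 2$ globally, so
$$ |p_l(f) - p_l(\tilde f \circ \pi_a)| \;\leq\; 2\epsilon \cdot p_l(B_n) + 2 \cdot p_l(\T \setminus B_n) \;\leq\; 4\epsilon $$
for all $l$ large enough. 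Now $p_l(\tilde f \circ \pi_a) = \sum_{j=1}^k f(x_j)\,p_l(S_j^{\uparrow Y_a})$ is a \emph{finite} linear combination of quantities that converge by Lemma \ref{E=MC2}(i), so it is Cauchy in $l$. Combining, $(p_l(f))_{l\geq 0}$ is Cauchy and converges to a limit we denote $p(f)$.

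To upgrade this to ``almost surely, for every $f\in\F$'', the whole argument must be performed on a single almost sure event that does not depend on $f$. I would fix, once and for all, a countable family of partitions --- one partition of $\T_{Y_a}$ into finitely many measurable sets of diameter $\leq 1/k$ for each pair $(a,k) \in \N^2$, chosen deterministically from $\T_{Y_a}$ by a canonical procedure --- together with a canonical representative in each piece. Intersecting the almost sure event from Lemma \ref{tight} with the countably many ``$p_l(S^{\uparrow Y_a})$ converges'' events from Lemma \ref{E=MC2}(i), one for each piece of these partitions, still yields an almost sure event. On this event, the approximation above applies to \emph{every} $f \in \F$ with tolerances depending only on $\epsilon$, giving the required uniformity. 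The main subtlety I expect is precisely this uniformity: Lemma \ref{E=MC2}(i) is stated conditionally on $\T_{Y_a}$ for a fixed measurable set $S$, so the discretization family must be chosen measurably from $\T_{Y_a}$ (not from $f$) and indexed by a countable set, before invoking the lemma.
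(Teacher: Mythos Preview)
Your proposal is correct and follows essentially the same approach as the paper: partition $\T_{Y_a}$ into finitely many pieces of small diameter, approximate $f$ by a step function composed with the projection $\pi_a$, use Lemma~\ref{E=MC2}(i) for convergence of the finite sum $\sum_j f(x_j)\,p_l(S_j^{\uparrow Y_a})$, and control the approximation error via Lemma~\ref{tight} and the Lipschitz bound on $f$. The paper organizes the argument slightly differently---indexing everything by $a$ (with partition mesh $1/a$ and tightness set $B_{k_a}$ for $k_a=\max\{k:\X_{2^k}\le Y_a\}$) rather than by $\epsilon$---and handles the uniformity over $f\in\F$ implicitly in exactly the way you anticipate, by fixing the countable family $\{I_i^a\}_{a,i}$ of interval pieces once and for all before invoking Lemma~\ref{E=MC2}(i).
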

\begin{proof} First for every $a\in \N$ let $\{I_i^a\}_{1\leq i \leq N_a}$ be a partition of $\T_{Y_a}=([0,Y_a],d)$ into intervals of diameter at most $1/a$. Then for every $a\in \N$ and $1\leq i \leq N_a$ let $J_i^a:= (I^a_i)^{\uparrow Y_a}$ and let $x^a_i\in I^a_i$. Note that for every $a\in \N$, $\{J_i^a\}_{1\leq i \leq N_a}$ is a partition of $\T$. So for every $l\geq Y_a$ and $f\in \F$,
\begin{equation} p_l(f) = \sum_{i=1}^{N_a} p_l \left (\1_{J_i^a}  f \right) =  \sum_{i=1}^{N_a} p_l\left ( J_i^a \right )f(x_i^a)+ \sum_{i=1}^{N_a} p_l \left (\1_{J_i^a} \left (f-f(x_i^a)\right ) \right). \label{11101} \end{equation}
By Lemma \ref{E=MC2} (i), almost surely for every $f\in \F$ the first sum  converges toward $\sum_{i=1}^{N_a} p\left ( J_i^a \right )f(x_i^a)$ as $l$ goes to infinity. Let us bound the second sum in order to prove that $(p_l(f))_{l\in \R^+}$ is Cauchy. For every $a\in \N$ let $k_a$ be the largest integer such that $\X_{2^{k_a}}\leq Y_a$. We have for every $f\in \F$:
\[ \sum_{i=1}^{N_a} p_l \left (\1_{J_i^a}  \left (f-f(x_i^a)\right ) \right) =   \sum_{i=1}^{N_a} p_l \left (\1_{J_i^a\cap B_{k_a}} \left (f-f(x_i^a)\right ) \right) +  p_l \left (\1_{\T\backslash B_{k_a}}  \left (f-f(x_i^a)\right ) \right) \]
and  \[  \left | \sum_{i=1}^{N_a} p_l \left (\1_{J_i^a}  \left (f-f(x_i^a)\right ) \right)  \right | \leq \sum_{i=1}^{N_a} p_l \left (\1_{J_i^a\cap B_{k_a}}  \left | f-f(x_i^a) \right | \right)  + p_l(\T\backslash B_{k_a}). \]
Furthermore for every $a\in \N$ and $1\leq i \leq N_a$, recall that by definition $I^a_i$ has diameter at most $\frac{1}{a}$ and that $d_H([0,\X_{2^{k_a}}],B_{k_a})\leq 8k_a2^{-k_a}$.  Therefore $J_i^a\cap B_{k_a}=(I^a_i)^{\uparrow Y_a} \cap B_{k_a}$ has diameter at most $\delta_a:= \frac{1}{a}+\frac{16k_a}{2^{k_a}}$. Hence for every $f\in \F$,
\begin{equation*} \left | \sum_{i=1}^{N_a} p_l \left (\1_{J_i^a}  \left (f-f(x_i^a)\right ) \right)  \right | 
  \leq   \sum_{i=1}^{N_a} p_l \left (J_i^a \cap B_{k_a} \right ) \delta_a + p_l(\T\backslash B_{k_a}) \leq  \delta_a+p_l(\T\backslash B_{k_a}).\notag
\end{equation*}
Moreover by Lemma \ref{tight} for every $a$ large enough $p_l(\T\backslash B_{k_a})\leq 2^{-2k_a}$. Finally for every $f\in \F$,
\begin{equation} \limsup_{a\to \infty} \limsup_{l\to \infty} \left | \sum_{i=1}^{N_a} p_l \left (\1_{J_i^a} \left (f-f(x_i^a)\right ) \right) \right | =0, \label{0710} \end{equation}
which implies together with \eqref{11101} that $(p_l(f))_{l\in \R^+}$ is Cauchy and thus converges.
\end{proof}
\begin{proof}[Proof of Theorem \ref{THM1}] 
First by lemma \ref{tight}, $(p_l)_{l\in \R^+}$ is tight. The convergence of $(p_l)_{l\in \R^+}$ then directly follows from Lemma \ref{manteau} and the Portmanteau theorem.

Towards proving that $p$ has full support, we first prove that $\mu$ has almost surely full support. Note that it suffices to prove that for every $a<b\in \R^+$, almost surely $\mu[a,b]>0$. If $\theta_0>0$ then $\mu[a,b]\geq (b-a)\theta_0^2>0$. So we assume henceforth that $\theta_0=0$. Note that in this case, $\sum_{i=1}^\infty \theta_i=\infty$. Moreover, recall that $\{X_i\}_{i\in \N}$ is a family of independent exponential random variables of parameter $\{\theta_i\}_{i\in\N}$ so that,
\[ \sum_{i=1}^\infty \proba(X_i\in[a,b])=\sum_{i=1}^\infty e^{-\theta_i a}\left (1-e^{-\theta_i(b-a)} \right ) =\infty. \]
Therefore by the Borel--Cantelli lemma, for every $a,b\in \R^+$ almost surely there exists an $i\in \N$ such that $X_i\in [a,b]$ and so $\mu[a,b]\geq \theta_i>0$. Thus, $\mu$ has almost surely full support.

Next we prove that $p$ also has full support. Fix $x\in \R^+$ and $\e>0$. Additionally for every $a\in \N$ let $k_a$ be the largest integer such that $\X_{2^{k_a}}\leq Y_a$. Note that for every $a\in \N$ large enough, by definition of $B_{k_a}$, $B(x,\e)^{\uparrow Y_a}\cap B_{k_a}$ has diameter at most $\e+16k_a2^{-k_a}\leq 2\e$. It follows that,

\begin{equation} p \left (B(x,2\e)\right )\geq p \left (B(x,\e)^{\uparrow Y_a}\cap B_{k_a} \right ) \geq p \left (B(x,\e)^{\uparrow Y_a} \right ) - p(\T\backslash B_{k_a}). \label{07102}\end{equation}
On the one hand, recall that almost surely $\mu(B(x,\e))>0$. Thus  by Lemma \ref{E=MC2} (ii), for every $a$ large enough, with probability at least $1-1/Y_a^5$,
\[  p \left (B(x,\e)^{\uparrow Y_a} \right ) \geq \frac{1}{2}p_{Y_a} (B(x,\e)) =\frac{\mu_{Y_a}(B(x,\e))}{2M_a}. \]
On the other hand, by Lemmas \ref{tight} (i), \ref{5}, and the definition of $k_a$, for every $a$ large enough, 
\[ p(\T\backslash B_{k_a})\leq 2^{-2k_a} \leq 2\mu[0,\X_{2^{k_a}}]^{-2}\leq 2\mu[0,Y_a]^{-2} =o\left (1/M_a\right ).\] 
Therefore by \eqref{07102}, almost surely $p(B(x,2\e))>0$. Since $x$, $\e$ were arbitrary and since rational numbers are dense on $\T$, it follows that $p$ has full support.

Finally, we prove that almost surely $p$ gives measure $1$ to the set of leaves and is non-atomic. 
For every $\e>0$ and $S\subset \T$, let $B(S,\e)=\{x\in \T: d(x,S)<\e\}$. Then let $(\e_a)_{a\in\N}$ be a sequence of positive real numbers decreasing sufficiently fast so that for every $a>0$ and $0\leq i <a$ we have $\mu_{Y_a}(B((Y_i,Y_{i+1}],\e_a))\leq 2 \mu_{Y_a}(Y_a,Y_{a+1}]$. By Lemma \ref{E=MC2} (ii) (iii), for every $a$ large enough and $0\leq i<a$, with probability at least $1-1/Y_a^{5}$, for every $l\geq Y_a$,
\begin{equation} p_{l} \left (B((Y_i,Y_{i+1}],\e_a)^{\uparrow Y_a} \right) \leq \max \left \{ 2 p_{Y_a}  \Big (B((Y_i,Y_{i+1}],\e_a) \Big);  \frac{\left (\log Y_a\right )^{6}}{Y_aM_a}\right \}. \label{0810} \end{equation}
Since by Lemma \ref{8} $a=O(Y_a^2)$, the Borel--Cantelli lemma implies that almost surely \eqref{0810} is true for every $a$ large enough, $0\leq i<a$ and $l\geq Y_a$. Furthermore by Lemma \ref{10} $M:=\max_{i\in \N} \mu(Y_i,Y_{i+1}]<\infty$. Also note that $\frac{\left (\log Y_a\right )^{6}}{Y_a}\limit 0$ as $a\to +\infty$. Therefore for every $a$ large enough, $0\leq i<a$, and $l\geq a$:
\begin{equation} p_{l} \left (B((Y_i,Y_{i+1}],\e_a)^{\uparrow Y_a} \right) \leq \frac{4M}{M_a}. \label{08102}\end{equation}
Moreover since for every $a\in \N$ the projection on $\T_{Y_a}$ (see \ref{5..2} for definition) is a continuous fonction, for every $0\leq i <a$, $B((Y_i,Y_{i+1}],\e_a)^{\uparrow Y_a}$ is open. Thus by letting $l\to \infty$ in \eqref{08102},  the Portmanteau theorem yields for $a$ large enough: 
\begin{equation} p\left (B((Y_i,Y_{i+1}],\e_a)^{\uparrow Y_a} \right) \leq \frac{4M}{M_a}, \label{2102} \end{equation}
which tends to $0$ as $a\to\infty$. So for every $i\in \N$, $p[Y_i,Y_{i+1}]=0$. Summing over all $i\in \N$ we get $p(\R^+)=0$ and so $p$ gives measure $1$ to the set of leaves. Note that \eqref{2102} also yield for every $a\in \N$,
\[ \sup_{x\in \T} p\{x\} = \max_{0\leq i <a}  \sup_{x\in [Y_i,Y_{i+1}]^{\uparrow Y_a}} p\{x\} \leq  \frac{4M}{M_a},\]
which implies, taking $a\to \infty$, that $p$ is non-atomic.
\end{proof}

\subsection{Other convergences toward $p$ : proof of Proposition \ref{other}}
In this section we prove Proposition \ref{other}. We will in fact prove the  following stronger result.

\begin{lemma} \label{other2}Let $\mu^{\alpha}$ be a positive random Borel measure on $\R^+$ which is $\sigma(\mu,\{Y_i\}_{i\in \N})$ measurable. Let for every $l\in \R^+$, $\mu^{\alpha}_l$ be the restriction of $\mu^{\alpha}$ to $\T_l=([0,l],d)$ and $p^{\alpha}_l:=\frac{\mu^{\alpha}_l}{\mu^{\alpha}[0,l]}$. Suppose that almost surely the following assertions hold: 
\begin{compactitem}
\item[(i)] For every $l>0$ $\mu^{\alpha}[0,l]<\infty$, and $\mu^{\alpha}(\R^+)=+\infty$.
\item[(ii)] There exists $\e>0$ such that $\mu^{\alpha}(Y_{i-1},Y_{i}]=o(\mu^{\alpha}[0,Y_i]^{1-\e})$. 
\item[(iii)] For all $\e>0$, $\sum_{i=1}^{n} \mu^{\alpha}(Y_{i-1},Y_i]\1_{Y_i-Y_{i-1}>\e} =o(\mu^{\alpha}[0,Y_n])$. \end{compactitem}
Then almost surely $\{p^{\alpha}_l\}_{l\in \R^+}$ converges weakly toward $p$.
\end{lemma}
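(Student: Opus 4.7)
The plan is to mirror the three-step structure used for Theorem \ref{THM1}: establish an analog of the mass conservation lemma \ref{E=MC2} for $\mu^\alpha$, prove tightness of $(p^\alpha_l)_{l\geq 0}$, and then deduce convergence of $p^\alpha_l(f)$ for every $f\in \F$ so that Portmanteau applies.

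For the mass conservation step, I would fix $a$ large, a measurable $S\subset \T_{Y_a}$, and set $A_i^\alpha:=\mu^\alpha_{Y_i}(S^{\uparrow Y_a})$ and $M_i^\alpha:=\mu^\alpha[0,Y_i]$. The crucial observation is that because $\mu^\alpha$ is $\sigma(\mu,\{Y_i\})$-measurable, the increments $\mu^\alpha(Y_i,Y_{i+1}]$ are $\F_i$-measurable, while the indicator $\xi_i:=\1_{(Y_i,Y_{i+1}]\subset S^{\uparrow Y_a}}$ still has conditional probability $A_i/M_i$ given $\F_i$ (the same "P\'olya signal" as before). Writing
\[ A_n^\alpha - A_a^\alpha = \sum_{i=a}^{n-1} \mu^\alpha(Y_i,Y_{i+1}]\,\frac{A_i}{M_i} + \sum_{i=a}^{n-1} \mu^\alpha(Y_i,Y_{i+1}]\Big(\xi_i-\frac{A_i}{M_i}\Big), \]
the first sum is handled by a weighted Ces\`aro argument using Lemma \ref{E=MC2}(i) ($A_i/M_i$ converges to some limit $q(S^{\uparrow Y_a})$) together with condition (i) ($M_n^\alpha\to\infty$), giving $\sim q(S^{\uparrow Y_a})\,M_n^\alpha$. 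The second sum is a martingale with bracket bounded by $\sum_i \mu^\alpha(Y_i,Y_{i+1}]^2$, which by condition (ii) is $o\big((M_n^\alpha)^{2-\e}\big)$; Doob's $L^2$ inequality (or a Chernoff bound) then yields concentration around the mean at scale $o(M_n^\alpha)$. Consequently $A_n^\alpha/M_n^\alpha\to q(S^{\uparrow Y_a})$, and handling boundary effects between $Y_i$ and $Y_{i+1}$ as in \eqref{0610} (using condition (ii) again to dominate $\mu^\alpha(Y_i,Y_{i+1}]/M_i^\alpha$) extends this convergence to all $l$. Since Theorem \ref{THM1} identifies $q(S^{\uparrow Y_a})$ with $p(S^{\uparrow Y_a})$, the limit is the desired one.

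Next, for tightness I would reuse the compact sets $B_n$ from Lemma \ref{tight}. Conditionally on $(\mu,\{Y_i\})$, Lemma \ref{pizza} still gives stochastic domination $d(y,\T_{\X_{2^n}})\leq_{\st}\Exp(4/\mu[0,\X_{2^n}])$ for every $y\in\R^+$; integrating against $d\mu^\alpha/\mu^\alpha[0,\X_{2^m}]$ gives
\[ \E\big[p^\alpha_{\X_{2^m}}(\T\backslash A_n)\,\big|\,\mu,\{Y_i\}\big]\leq \int_0^{\X_{2^m}}\!\!e^{-\frac{2n}{2^n}\mu[0,\X_{2^n}]}\,\frac{d\mu^\alpha(x)}{\mu^\alpha[0,\X_{2^m}]}, \]
up to boundary contributions from sticks of length at least $8n/2^n$ straddling $\X_{2^n}$; condition (iii) ensures precisely that those exceptional sticks carry $o(M_n^\alpha)$ mass, so the Borel--Cantelli argument goes through unchanged and gives $p^\alpha_l(\T\backslash B_n)\leq 2^{-2n}$ for $n$ large.

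Finally, with the two ingredients above in hand, the proof of Lemma \ref{manteau} transfers verbatim: partition $\T_{Y_a}$ into intervals of diameter $1/a$, lift to a partition $\{J_i^a\}$ of $\T$, and split $p^\alpha_l(f)$ into a "coarse" contribution (controlled by the generalized Lemma \ref{E=MC2}) and a "fine" oscillation controlled by $\delta_a + p^\alpha_l(\T\backslash B_{k_a})$. The Portmanteau theorem then yields $p^\alpha_l\Rightarrow p$. The main obstacle is the martingale/weighted P\'olya step: unlike the original proof, the increments $\mu^\alpha(Y_i,Y_{i+1}]$ and the sampling weights $\mu(Y_i,Y_{i+1}]$ are driven by different measures, so the classical P\'olya urn Lemma (Lemma \ref{P\'olya}) cannot be invoked directly and one must instead use the martingale decomposition above, where condition (ii) plays the role previously played by Lemma \ref{10}.
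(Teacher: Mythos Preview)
Your Step~1 is essentially correct and is a two–sided variant of what the paper isolates as Lemma~\ref{strong2}: the weighted Ces\`aro plus martingale increment decomposition you describe is equivalent, via conditions (i) and (ii), to the strong law of large numbers used there.

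The gap is in Step~2. Lemma~\ref{pizza} does \emph{not} survive conditioning on $\{Y_i\}_{i\in\N}$: its proof uses that $\{Y_i\}$ is a Poisson point process to dominate each increment $z_i-y_i$ by an exponential, and once $\{Y_i\}$ is fixed these increments are deterministic functions of the $Z$'s. Concretely, if $x$ sits at the tip of a stick of length $\ell$, then $d(x,\T_{\X_{2^n}})\geq\ell$ with conditional probability $1$, so no exponential tail bound can hold uniformly in $x$. This is not a boundary effect coming from the single stick straddling $\X_{2^n}$; it affects every long stick anywhere in $[0,l]$, and it is precisely here (not in boundary terms) that condition (iii) must enter.

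The paper sidesteps Step~2 entirely by a different decomposition. Rather than partitioning according to the projection of $x$ onto $\T_{Y_a}$ and then controlling $d(x,\mathrm{proj}(x))$ (which would force you to prove tightness for $p_l^\alpha$), the paper partitions according to the glue point $\zeta(x):=Z_{\max\{i:Y_i\leq x\}}$. The point is that $d(x,\zeta(x))$ is at most the stick length, so on sticks of length $\leq\varepsilon$ one has $|f(x)-f(x_i^a)|\leq\varepsilon+\delta_a$ whenever $\zeta(x)\in J_i^a\cap B_{k_a}$; long sticks are handled by condition~(iii); and the term $p_l^\alpha(\{x:\zeta(x)\notin B_{k_a}\})$ is controlled via Lemma~\ref{strong2} together with the \emph{already established} bound $p_l(\T\setminus B_{k_a})\leq 2^{-2k_a}$ from Lemma~\ref{tight}. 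A further simplification is that the paper only proves $\limsup_l p_l^\alpha(f)\leq p(f)$ and uses $1-f\in\F$ for the other inequality, so tightness of $(p_l^\alpha)$ is never needed. In short: your Step~1 is the right engine, but the route to the conclusion should go through $\zeta(x)$ rather than through a direct $p_l^\alpha$-analog of Lemma~\ref{tight}.
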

In order to prove Lemma \ref{other2}, we first show the following strong law of large number.
\begin{lemma} \label{strong2} Let $\mu^{\alpha}$ be such as in Lemma \ref{other} and $S\subset \T$ be a random measurable set such that for every $n$ large enough, $p_{Y_n}(S)$ is $\sigma(\mu,\{Y_i\}_{i\in \N}, \{Z_i\}_{1\leq i <n})$ measurable. We have almost surely,
\[  \limsup_{n\to \infty} \sum_{i=1}^n p^{\alpha}(Y_{i-1},Y_{i}]  \1_{Z_{i-1}\in S} \leq \limsup_{l\to \infty} p_l(S).\]
\end{lemma}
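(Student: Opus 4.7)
My plan is to decompose the weighted empirical sum into a predictable mean and a martingale remainder, then show the mean part is dominated by $\limsup_l p_l(S)$ while the martingale part vanishes almost surely. Write $m_i^\alpha := \mu^\alpha(Y_{i-1}, Y_i]$, $M_n^\alpha := \mu^\alpha[0, Y_n]$, $\F_n := \sigma(\mu, \{Y_i\}_{i\in \N}, \{Z_i\}_{1\leq i<n})$, and $X_i := \1_{Z_{i-1}\in S} - p_{Y_{i-1}}(S)$, so that the target sum rewrites as
\[ \frac{1}{M_n^\alpha}\sum_{i=1}^n m_i^\alpha \1_{Z_{i-1}\in S} \;=\; \frac{1}{M_n^\alpha}\sum_{i=1}^n m_i^\alpha p_{Y_{i-1}}(S) \;+\; \frac{1}{M_n^\alpha}\sum_{i=1}^n m_i^\alpha X_i. \]
Since $Z_{i-1}$ has $\F_{i-1}$-conditional law $p_{Y_{i-1}}$ and the hypothesis forces $p_{Y_{i-1}}(S)\in\F_{i-1}$ for $i$ large, $(X_i)$ is a bounded martingale difference sequence adapted to $(\F_i)$; the weights $m_i^\alpha$ are $\F_{i-1}$-measurable by the measurability assumption on $\mu^\alpha$.

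For the mean part, I would fix $\eta>0$, choose $i_0$ with $p_{Y_{i-1}}(S)\leq \limsup_l p_l(S)+\eta$ for every $i\geq i_0$ (available because $Y_i\to\infty$), split the sum at $i_0$, and invoke hypothesis (i) to get $M_n^\alpha\to\infty$. This yields $\limsup_n (M_n^\alpha)^{-1}\sum_{i=1}^n m_i^\alpha p_{Y_{i-1}}(S) \leq \limsup_l p_l(S)+\eta$, and then $\eta\to 0$.

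For the martingale part, I would condition on $\sigma(\mu,\{Y_i\})$ so the weights $m_i^\alpha$ become deterministic, and apply Azuma--Hoeffding to $W_n:=\sum_{i\leq n} m_i^\alpha X_i$, whose increments are bounded by $m_i^\alpha$:
\[ \proba\Bigl(|W_n|>\delta M_n^\alpha \,\Big|\, \mu,\{Y_i\}\Bigr) \leq 2\exp\Bigl(-\frac{\delta^2 (M_n^\alpha)^2}{2\sum_{i\leq n}(m_i^\alpha)^2}\Bigr). \]
Hypothesis (ii) gives $\max_{i\leq n} m_i^\alpha = o((M_n^\alpha)^{1-\e})$ for some $\e>0$, hence $\sum_{i\leq n}(m_i^\alpha)^2 \leq \max_i m_i^\alpha\cdot M_n^\alpha = o((M_n^\alpha)^{2-\e})$, so the Azuma exponent grows like $(M_n^\alpha)^\e$. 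Along a geometric subsequence $n_k:=\min\{n:M_n^\alpha\geq (1+\rho)^k\}$ with fixed $\rho>0$, the right-hand side becomes summable in $k$ and Borel--Cantelli gives $W_{n_k}/M_{n_k}^\alpha\to 0$ almost surely. A monotone interpolation between $n_k$ and $n_{k+1}$ using $|X_i|\leq 1$ then bounds $\limsup_n |W_n|/M_n^\alpha$ by $\rho/(1+\rho)$ almost surely, and letting $\rho=1/m$ run along $m\in\N$ upgrades this to $W_n/M_n^\alpha\to 0$ a.s.

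The main obstacle is tuning the geometric grid: $\rho$ must be small for negligible interpolation error, yet fixed so the sparse subsequence satisfies Borel--Cantelli, which is precisely where hypothesis (ii)'s polynomial gap $(M_n^\alpha)^\e$ is essential. Hypothesis (iii) plays no role here; it will only enter in the subsequent proof of Lemma \ref{other2} to compare the weak limits of $p^\alpha_l$ and $p_l$. Combining the mean and martingale bounds yields the claimed inequality.
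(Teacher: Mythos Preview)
Your proof is correct but takes a different route from the paper's. The paper avoids the martingale analysis via a coupling: it introduces i.i.d.\ uniforms $(U_n)$ on $[0,1]$ with $\{Z_n\in S\}=\{U_n\leq p_{Y_n}(S)\}$ for large $n$, then for each fixed $t>\limsup_l p_l(S)$ dominates $\1_{Z_{i-1}\in S}\leq \1_{U_{i-1}\leq t}$ eventually and applies the elementary weighted strong law for i.i.d.\ Bernoulli variables (Lemma~\ref{strong}, proved in the appendix via the three-series theorem and Kronecker's lemma under hypotheses (i)--(ii)). You instead split off the predictable mean and control the centred martingale directly by Azuma--Hoeffding along a geometric grid in $M_n^\alpha$. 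The paper's reduction is shorter and conceptually clean---the coupling replaces the random, path-dependent threshold $p_{Y_n}(S)$ by a fixed deterministic $t$, so a single second-moment SLLN suffices---while your approach is self-contained (no auxiliary lemma) and uses exponential concentration, which is more than needed here but would transfer to settings where such a coupling is unavailable. Your remark that hypothesis~(iii) plays no role in this lemma matches the paper.
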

\begin{proof}Let $\{U_n\}_{n\in \N}$ be a family of independent uniform random variables on $[0,1]$. Since for every $n\in \N$, conditionally on $(\mu,\{Y_j\}_{j\in \N}, \{Z_j\}_{1\leq j <n} )$, $Z_n$ has law $p_{Y_n}$, we may couple $\{U_n\}_{n\in \N}$ and $\{Z_n\}_{n\in \N}$ in such a way that for every $n$ large enough, $Z_n\in S$ if and only if $U_n\leq p_{Y_n}(S)$. Therefore, by Lemma \ref{strong} and assumptions $(i)$ and $(ii)$, almost surely for every $t>\limsup_{l\to \infty} p_l(S) $, 
\begin{equation}\limsup_{n\to \infty} \frac{\sum_{i=1}^n \mu^{\alpha}(Y_{i-1},Y_{i}]\1_{Z_{i-1}\in S}}{\mu^{\alpha}[0,Y_n]} \leq \limsup_{n\to \infty}\frac{\sum_{i=1}^n \mu^{\alpha}(Y_{i-1},Y_{i}]\1_{U_{i-1}\leq t}}{\mu^{\alpha}[0,Y_n]}=t.  \label{cle} \end{equation}
Taking $t\to \limsup_{l\to \infty} p_l(S)$ in \eqref{cle} yields the desired inequality. 
\end{proof}

\begin{proof}[Proof of Lemma \ref{other2}] First by the Portmanteau's theorem it suffices to prove that for every $f\in \F$, $p_l^\alpha(f)\to p(f)$ where $\F$ is the set of positive, 1-Lipschitz functions that are bounded by 1 on $\T$. Moreover since we work with probability measures and since for every $f\in \F$, $(1-f)\in \F$, it suffices to prove instead that for every $f\in \F$, $\limsup p_l^\alpha(f)\leq p(f)$. To this end, we proceed as in the proof of Lemma \ref{manteau} and will hence use the same notations. In addition, for $\e>0$ let $\Lambda_{\e}:=\bigcup_{i, Y_{i+1}-Y_i\leq \e} (Y_{i},Y_{i+1})$ and for every $x\in \T$, let $\zeta(x):=Z_{\max\{i, Y_i\leq x\}}$. 

Now fix $\e>0$ and recall from the proof of Lemma \ref{manteau} that for every $a\in \N$, $\{J_i^a\}_{1\leq i \leq N_a}$,  is a partition of $\T$, so for every $f\in\F$ and $l\geq Y_a$,
\begin{equation} p_l^{\alpha}(f) \leq \sum_{i=1}^{N_a} p_l^{\alpha}(f\1_{z(\cdot)\in J_i^a\cap B_{k_a}}\1_{\Gamma_e})+p_l^{\alpha}(f\1_{z(\cdot)\notin B_{k_a}}) + p_l^{\alpha}(f\1_{\T\backslash \Gamma_\e}).\label{0910} \end{equation}

We now upper bound each term of \eqref{0910} separately. First, recall that $J_i^a\cap B_{k_a}$ have diameter at most $\delta_a$, thus for every $1\leq i \leq N_a$ and $s\in S_i^a:=\{x,z(x)\in J_i^a\cap B_{k_a}\}\cap\Gamma_\e$ we have $d(s,x_i^a)\leq \e+\delta_a$. Therefore for every $f\in \F$,
\begin{equation}
 \sum_{i=1}^{N_a} p_l^{\alpha}(f\1_{S_i^a}) \leq \sum_{i=1}^{N_a} p_l^{\alpha}((f(x_i^a)+\e+\delta_a)\1_{S_i^a}) \leq \sum_{i=1}^{N_a} f(x_i^a)p_l^{\alpha}\left (\1_{z(\cdot)\in J_i^a}\right )+\e +\delta_a. \label{2010}
 \end{equation}
Furthermore by Lemma \ref{E=MC2} (i) almost surely for every $a\in \N$ and $1\leq i \leq N_a$, $p_l\left (J_i^a\right )\limit p(J_i^a)$ as $l\to \infty$, hence by Lemma \ref{strong2} almost surely 
\[ \limsup_{l\to \infty} p_l^{\alpha}(\1_{z(\cdot)\in J_i^a})\leq p(J_i^a).\]
Therefore since $\delta_a\to 0$ as $a\to \infty$, we have by \eqref{2010} and \eqref{0710}  for every $f\in \F$,
\[ \limsup_{a\to \infty} \limsup_{l\to \infty} \sum_{i=1}^{N_a} p_l^{\alpha}(f\1_{S_i^a})\leq \limsup_{a\to \infty} \sum_{i=1}^{N_a} f(x_i^a)p(J_i^a) +\e \leq  p(f)+\e.\]
Next we have by Lemma \ref{tight} (i) and Lemma \ref{strong2}, almost surely for every $a$ large enough, 
\[ \limsup_{l\to \infty} p_l^{\alpha}(\1_{z(\cdot)\notin B_{k_a}}) \leq 2^{-2k_a}. \] Futhermore by assumption (iii), $p_l^{\alpha}(\1_{\T\backslash \Gamma_\e})\to 0$ as $l\to 0$. Finally \eqref{0910} yields, for every $f\in\F$,
\[  \limsup_{l\to \infty} p_l^{\alpha}(f)\leq  \limsup_{a\to \infty} p(f)+ \e+2^{-2k_a} = p(f)+\e. \]
Taking $\e\to 0$ in the previous inequality concludes the proof.
\end{proof}
\begin{proof}[Proof of Proposition \ref{other}]
We now justify that $\mu^{\leadsto}$ and $\mu^{\bullet}$ satisfy the assumptions of Lemma \ref{other2}. First (i) and the $\sigma(\mu,\{Y_i\}_{i\in \N})$ measurability for $\mu^{\leadsto}$ and $\mu^{\bullet}$ are straightforward from their definitions. (ii) for $\mu^{\leadsto}$ is an immediate consequence of Lemma \ref{9}. (ii) for $\mu^{\bullet}$ comes from $\mu^{\bullet}(Y_n,Y_{n+1}]=1$. (iii) is a little tedious to prove and follows directly from the fact that conditionally on $\mu$, $\{Y_i\}_{i\in \N}$ is a Poisson point process with rate $\mu[0,l]dl$ and that by Lemma \ref{5} almost surely $\mu[0,l]\limit \infty$ as $l\to \infty$. We omit the details. This concludes the proof of Proposition \ref{other}.
\end{proof}

\section{Compactness} \label{Compactness}  \label{Section 6}
\subsection{Equivalent condition}
In this section, we obtain a condition equivalent to that of Theorem \ref{THM2} which is more convenient to study the compactness of the ICRT from the bounds provided by Lemmas \ref{BIG LEMMA} and \ref{manger}. Additionally we also prove that the condition conjectured in \cite{ExcICRT} is also equivalent to that of Theorem \ref{THM2}. For $l\geq 0$, recall that $\X_l$ is defined by $\E[\mu[0,\X_l]]=l$ and let 
\[\psi(l):= \frac{\theta_0^2}{2} l^2+ \sum_{i=1}^{\infty} (e^{-l\theta_i}-1+l\theta_i). \] 
 \begin{lemma} \label{condition} The following conditions are equivalent:
\[ (i) \quad \int^{+\infty} \frac{dl}{l\E[\mu[0,l]]} <+\infty \quad , \quad (ii) \quad \int^{+\infty} \frac{dl}{\psi(l)} <+\infty \quad , \quad(iii) \quad \sum^{+\infty} \frac{\log \X_{2^n}}{2^n}<\infty.  \]
\end{lemma}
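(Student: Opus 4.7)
The plan is to handle the two equivalences \emph{(i) $\Leftrightarrow$ (ii)} and \emph{(i) $\Leftrightarrow$ (iii)} separately. The first is a pointwise comparison of integrands; the second is a dyadic decomposition of the integral followed by an Abel summation.

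For \emph{(i) $\Leftrightarrow$ (ii)}, I would unfold $\E[\mu[0,l]]$ via Lemma~\ref{2} and set $u_i := l\theta_i$, giving
\[ l\,\E[\mu[0,l]] \;=\; \theta_0^2 l^2 + \sum_{i\geq 1} u_i(1-e^{-u_i}), \qquad \psi(l) \;=\; \tfrac{1}{2}\theta_0^2 l^2 + \sum_{i\geq 1}(u_i - 1 + e^{-u_i}), \]
so the two expressions are structurally identical up to the inner function. The elementary inequality $\tfrac{1}{2}u(1-e^{-u}) \leq u-1+e^{-u} \leq u(1-e^{-u})$ for $u\geq 0$ (the upper bound reduces to $(1+u)e^{-u}\leq 1$; the lower bound vanishes at $u=0$ with derivative $\tfrac{1}{2}(1-(1+u)e^{-u})\geq 0$) immediately yields $\tfrac{1}{2}\, l\,\E[\mu[0,l]] \leq \psi(l) \leq l\,\E[\mu[0,l]]$, so the two integrals converge together.

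For \emph{(i) $\Leftrightarrow$ (iii)}, I would cut the integral along the dyadic grid $(\X_{2^n})_n$, which is well defined because $F(l):=\E[\mu[0,l]]$ is strictly increasing, continuous, and tends to $\infty$ as $l\to\infty$ (using either $\theta_0>0$ or $\sum_{i\geq 1}\theta_i=\infty$ from the definition of $\Omega$). Writing
\[ \int^\infty \frac{dl}{l\,F(l)} \;=\; \sum_n \int_{\X_{2^n}}^{\X_{2^{n+1}}} \frac{dl}{l\,F(l)}, \]
and observing that on the $n$-th piece $F(l)\in[2^n,2^{n+1}]$, each piece is sandwiched between $2^{-n-1}\log(\X_{2^{n+1}}/\X_{2^n})$ and $2^{-n}\log(\X_{2^{n+1}}/\X_{2^n})$. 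Setting $a_n := \log \X_{2^n}$ (non-decreasing and eventually positive), this reduces (i) to the convergence of $\sum_n 2^{-n}(a_{n+1}-a_n)$.

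The last step is an Abel summation: a direct rearrangement gives
\[ \sum_{n=0}^{N} \frac{a_{n+1}-a_n}{2^n} \;=\; \sum_{m=1}^{N} \frac{a_m}{2^m} + \frac{a_{N+1}}{2^N} - a_0. \]
Since all terms are eventually non-negative, convergence of the left-hand side as $N\to\infty$ forces both terms on the right to stay bounded, hence $\sum_m a_m/2^m<\infty$; conversely, if $\sum_m a_m/2^m<\infty$, monotonicity of $(a_n)$ gives $a_N/2^N \leq 2\sum_{k\geq N}a_k/2^k \to 0$, so the boundary term vanishes and the two sums agree up to the constant $a_0$. This establishes (i) $\Leftrightarrow$ (iii). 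There is no serious obstacle here: the conceptual content is the dyadic reduction, while the remaining steps---the pointwise inequality in \emph{(ii)} and the Abel summation in \emph{(iii)}---are routine manipulations.
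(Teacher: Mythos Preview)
Your proof is correct and follows the same approach as the paper: the same pointwise inequality between $u(1-e^{-u})$ and $u-1+e^{-u}$ for (i)$\Leftrightarrow$(ii), and the same dyadic decomposition along $(\X_{2^n})_n$ for (i)$\Leftrightarrow$(iii). The only cosmetic difference is that you make the Abel summation and the vanishing of the boundary term $a_{N+1}/2^N$ explicit, whereas the paper compresses this into a one-line telescoping identity.
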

\begin{proof}
Since for every $x\in \R^+$, $e^{-x}-1+x\leq x(1-e^{-x}) \leq 2 \left ( e^{-x}-1+x \right )$, for every $l\geq 0$:
\[ \frac{\theta_0^2}{2} l^2+ \sum_{i=1}^{\infty} \left (e^{-l\theta_i}-1+l\theta_i \right ) \leq  \theta_0^2 l^2+\sum_{i=1}^{\infty}l\theta_i \left (1-e^{-\theta_il} \right )  \leq  \theta_0^2 l^2+ \sum_{i=1}^{\infty} 2 \left(e^{-l\theta_i}-1+l\theta_i \right). \]
So by  \eqref{107} for every $l\geq 0$, $\psi(l)\leq l\E[\mu[0,l]]\leq 2\psi(l)$. It follows readily that (i) and (ii) are equivalent. Furthermore
\[ \int_{\X_1}^{\infty} \frac{dl}{l\E[\mu[0,l]]} = \sum_{k=0}^{\infty} \int_{\X_{2^k}}^{\X_{2^{k+1}}} \frac{dl}{l\E[\mu[0,l]]} \leq \sum_{k=0}^{\infty} \int_{\X_{2^k}}^{\X_{2^{k+1}}} \frac{dl}{l2^k} 
= \sum_{k=1}^{\infty} \frac{\log \X_{2^k}}{2^k}-\log \X_1, \]
and similarly
\[ \int_{\X_1}^{\infty} \frac{dl}{l\E[\mu[0,l]]} = \sum_{k=0}^{\infty} \int_{\X_{2^k}}^{\X_{2^{k+1}}} \frac{dl}{l\E[\mu[0,l]]} \geq \sum_{k=0}^{\infty} \int_{\X_{2^k}}^{\X_{2^{k+1}}} \frac{dl}{l2^{k+1}}= \sum_{k=1}^{\infty} \frac{\log \X_{2^k}}{2^{k+1}}-\frac{\log \X_1}{2}. \]
 So (i) and (iii) are equivalent.
\end{proof}
\subsection{The condition of Theorem \ref{THM2} is sufficient for compactness}
The aim of this section is to prove Lemma \ref{BIG LEMMA} below. This Lemma implies that under condition (iii) of Lemma \ref{condition}, $(\T_{\X_{2^k}})_{k\in \N}$ is a Cauchy sequence of compact sets for the Hausdorff topology and thus converges toward a compact set. Since $(\T_{\X_{2^k}})_{k\in\N}$ is increasing (for $\subset$) toward $\T$, $\T$ is the only possible limit, and hence is compact.

\begin{lemma} \label{BIG LEMMA}
Almost surely, for every $k$ large enough: 
\[d_{H}(\T_{\X_{2^{k-1}}}, \T_{\X_{2^{k}}})\leq 21 \frac{\log \X_{2^k}}{2^k}. \]
\end{lemma}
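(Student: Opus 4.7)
The plan is to reduce the Hausdorff distance to a supremum over finitely many endpoints and then combine a deterministic bound on stick lengths with a high-probability bound on endpoint distances via Lemma~\ref{pizza}. Since $\T_{\X_{2^{k-1}}}\subseteq \T_{\X_{2^{k}}}$,
\[
d_H(\T_{\X_{2^{k-1}}}, \T_{\X_{2^{k}}}) \;=\; \sup_{x\in \T_{\X_{2^{k}}}} d(x,\T_{\X_{2^{k-1}}}).
\]
Every $x\in \T_{\X_{2^{k}}}$ with $x>\X_{2^{k-1}}$ sits on a unique stick $(Y_{j-1},Y_j]$, and its geodesic to the previously built part of the tree passes through the gluing point $Z_{j-1}$, so
\[
d(x,\T_{\X_{2^{k-1}}}) \;\leq\; l_j + d(Y_j,\T_{\X_{2^{k-1}}}),
\]
while in the boundary case $Y_{j-1}<\X_{2^{k-1}}\leq Y_j$ one even has $d(x,\T_{\X_{2^{k-1}}})\leq l_j$. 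It therefore suffices to control stick lengths and endpoint distances uniformly over the $j$ with $Y_j\in(\X_{2^{k-1}},\X_{2^{k}}]$.

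For the stick lengths I would combine Lemma~\ref{9} with Lemma~\ref{5}: for $k$ large and any $j$ with $Y_{j-1}\geq \X_{2^{k-1}}$ one has $M_{j-1}\geq \mu[0,\X_{2^{k-1}}]\geq (1-o(1))\,2^{k-1}$ and $Y_{j-1}\leq \X_{2^{k}}$, hence
\[
l_j \;\leq\; \frac{5\log Y_{j-1}}{M_{j-1}} \;\leq\; \frac{(10+o(1))\log \X_{2^{k}}}{2^{k}}.
\]
For the endpoint distances I would apply Lemma~\ref{pizza} conditionally on $\mu$: for each fixed $Y_j\in(\X_{2^{k-1}},\X_{2^{k}}]$,
\[
\proba\!\bigl(d(Y_j,\T_{\X_{2^{k-1}}})>t \,\big|\,\mu\bigr) \;\leq\; \exp\!\bigl(-t\,\mu[0,\X_{2^{k-1}}]/4\bigr).
\]
Lemma~\ref{8} bounds the number of eligible endpoints by $2\X_{2^{k}}^{2}$, Lemma~\ref{5} lower-bounds the rate by $(1-o(1))\,2^{k-3}$, and Lemma~\ref{2} ensures $\X_{2^{k}}\geq c\cdot 2^{k}$. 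Setting $t=C\log\X_{2^{k}}/2^{k}$ with $C$ large enough, a union bound yields probabilities summable in $k$, and Borel--Cantelli gives almost surely for $k$ large
\[
\max_{Y_j\in (\X_{2^{k-1}},\X_{2^{k}}]} d(Y_j,\T_{\X_{2^{k-1}}}) \;\leq\; \frac{C\log\X_{2^{k}}}{2^{k}}.
\]

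Summing the two contributions gives an a.s.\ bound of the form $(10+C+o(1))\log\X_{2^{k}}/2^{k}$, and the main obstacle is to track constants tightly enough to reach the total $21$. The naive use of the general ``factor $4$'' tail from Lemma~\ref{pizza} has to absorb up to $\sim\X_{2^{k}}^{2}$ endpoints in the union bound, which alone already forces $C$ into the range $16$--$17$. To reduce $C$ to $11$ one has to exploit the sharper intermediate estimate $\Exp(2/\mu[0,x])$ that appears inside the proof of Lemma~\ref{pizza}: since Lemma~\ref{5} gives $\mu[0,\X_{2^{k}}]\sim 2\,\mu[0,\X_{2^{k-1}}]$, the relevant mass-comparison used in that proof is satisfied up to lower-order corrections, and the exponent cost in the union bound is essentially halved. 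This delicate balance between the number of endpoints, the tail rate, and the stick-length contribution is the only substantive difficulty; once it is carried out, the bound $21\log\X_{2^{k}}/2^{k}$ follows.
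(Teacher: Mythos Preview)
Your reduction to endpoint distances plus stick lengths is sound, and the general strategy (Lemma~\ref{pizza} plus Borel--Cantelli) is the right one, but two concrete problems keep the argument from closing at the stated constant.

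\textbf{Applying Lemma~\ref{pizza} to random points.} Lemma~\ref{pizza} gives a tail bound for $d(\T_x,y)$ with \emph{deterministic} $y$, conditionally on $\mu$. When you write ``for each fixed $Y_j$'' and then union-bound, you are implicitly using the same tail for the random cut points $Y_j$. This is not justified: conditioning on $Y_j=y$ distorts the law of the cuts in $(0,y)$ (they become order statistics of $j-1$ points rather than a Poisson process), and the step in the proof of Lemma~\ref{pizza} that bounds $z_i-\max(y_i,x)$ by $\Exp(1/\mu[0,x])$ relies precisely on the PPP structure of the cuts below $y$. So the per-endpoint tail you invoke has no direct proof.

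\textbf{The sharpened constant.} Your plan to reduce $C$ by invoking the intermediate $\Exp(2/\mu[0,x])$ bound from inside the proof of Lemma~\ref{pizza} misreads the hypothesis there. That bound requires $\mu[0,x]\geq 2\mu[0,y)$ with $x=\X_{2^{k-1}}$ the target and $y$ the starting point; for $y$ near $\X_{2^{k}}$ one has $\mu[0,y)\sim 2^{k}\sim 2\mu[0,\X_{2^{k-1}}]$, i.e.\ the inequality goes the \emph{wrong way}. So the factor-two improvement is unavailable, and the union-bound route cannot reach the constant $21$.

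The paper avoids both issues by a different device. Instead of a union bound over endpoints, it applies Lemma~\ref{pizza} to every deterministic $x\in[0,\X_{2^{k}}]$ and integrates with respect to Lebesgue measure (Fubini), obtaining for $k$ large
\[
\E\Big[\,\mathrm{Leb}\big\{x\le \X_{2^{k}}:\, d(x,\T_{\X_{2^{k-1}}})>20\log\X_{2^{k}}/2^{k}\big\}\ \Big|\ \mu\Big]\ \le\ \X_{2^{k}}^{-4/3}.
\]
Markov plus Borel--Cantelli then forces this bad set to have Lebesgue measure $<\X_{2^{k}}^{-1}$. The key geometric observation is that if some point were more than $20\log\X_{2^{k}}/2^{k}+\X_{2^{k}}^{-1}$ from $\T_{\X_{2^{k-1}}}$, its geodesic toward $\T_{\X_{2^{k-1}}}$ would contain a segment of length $\X_{2^{k}}^{-1}$ entirely inside the bad set, a contradiction. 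This yields $21=20+1$ with no stick-length term and no conditioning on cut points.
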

\begin{proof} For every $k\in \N$ and $x\in \T$, let $E_k(x)$ denotes the event $d(x,[0,\X_{2^{k-1}}])>20 \log \X_{2^k} 2^{-k}$.
First by Fubini's theorem and Lemma \ref{pizza}, we have conditionally on $\mu$:
\[ \E \left [ \left . \int_{0}^{\X_{2^k}} \1_{E_k(x)} dx \right | \mu \right  ]  = \int_{0}^{\X_{2^k}} \proba \left ( \left . E_k(x) \right | \mu \right ) dx  \leq \X_{2^k} \exp \left ( -5 \frac{\log \X_{2^k}}{2^k} \mu[0,\X_{2^{k-1}}] \right ).
\]
Then by Lemma \ref{5} as $k$ goes to infinity $\mu[0,\X_{2^{k}}]\sim 2^k$. So for every $k$ large enough:
\[  \E \left [ \left . \int_{0}^{\X_{2^k}} \1_{E_k(x)} dx \right | \mu \right  ]  \leq \X_{2^k}^{-4/3}. \]
Furthermore by Lemma \ref{2}, $2^k=O(\X_{2^k})$ so $\sum \X_{2^k}^{-1/3}<\infty$. Hence by Markov's inequality and the Borel--Cantelli lemma, for every $k$ large enough:
\[  \int_{0}^{\X_{2^k}} \1_{E_k(x)}dx < \X_{2^k}^{-1}. \]
Note that it implies that, for every $k$ large enough and $x\in [0,\X_{2^k}]$,  
\[ d(x,[0,\X_{2^{k-1}}])\leq 20 \frac{\log \X_{2^k}}{2^k}  +\X_{2^k}^{-1}, \]
since otherwise the geodesic path from $x$ to $[0,\X_{2^{k-1}}]$ would contain a segment $S$ of length at least $\frac{1}{\X_{2^k}}$ such that $d(S,[0,\X_{2^{k-1}}])> 20\log \X_{2^k} 2^{-k}$. Finally by Lemma \ref{2}, for every $k$ large enough $\X_{2^k}\geq 2^k$, hence $\X_{2^k}^{-1} \leq \log \X_{2^k} 2^{-k}$. This concludes the proof.
\end{proof}
\subsection{The condition of Theorem \ref{THM2} is necessary for compactness}
The following section is organized as follow: Lemma \ref{Long} defines and proves the existence of "long" segments, Lemma \ref{glue} proves that they tend to "aggregate". Lemma \ref{manger} deduces a lower bound on $d_H(\T_{\X_{2^k}}, \T)$ from the two previous lemmas, thus proving that the condition is necessary. Finally Lemma \ref{NonCompact} gives a more precise view of the geometry of the tree in the non-compact case: "the tree is infinite in every direction".
\begin{lemma} \label{Long} For every $n\in \N$ let $L_n:=\frac{\log \X_{2^n}}{2^{n+2}}$ and let $\I_n$ be the set of segments $[Y_a+L_n, Y_{a+1}]$ with 
\[ Y_a\in [\X_{2^n},\X_{2^{n+1}}) \quad ; \quad Y_a+L_n\leq Y_{a+1} \quad ; \quad \mu \left [Y_a+L_n, Y_{a+1} \right ] \geq  \frac{1}{\X_{2^{n+1}}^2}. \]
Almost surely for every $n$ large enough we have $ \# \I_n\geq 2^{n+2}\X_{2^n}^{1/3}$.
\end{lemma}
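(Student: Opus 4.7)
The plan is to work conditionally on $\mu$, exploiting that, given $\mu$, the cut sequence $(Y_i)_{i\in \N}$ is a Poisson point process with intensity $\mu[0,l]\,dl$. I combine a Palm/Mecke computation for the mean of the number of long cuts with a second-moment bound for concentration; the $\mu$-mass condition will be handled at the end, as it is almost automatic given how small the threshold $\X_{2^{n+1}}^{-2}$ is.

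First, I estimate the expected number $\E[W_n\mid\mu]$ of indices $a$ with $Y_a\in [\X_{2^n},\X_{2^{n+1}})$ and $Y_{a+1}-Y_a\ge L_n$. By the Mecke formula,
\[
\E[W_n\mid\mu]=\int_{\X_{2^n}}^{\X_{2^{n+1}}}\mu[0,l]\,\exp\!\Big(-\int_l^{l+L_n}\mu[0,s]\,ds\Big)\,dl.
\]
Lemma~\ref{5} gives $\mu[0,l]\sim \E[\mu[0,l]]$, so $\mu[0,l]\ge 2^n(1-o(1))$ on the integration range; Lemma~\ref{6} ensures that $\mu[l,l+L_n]$ is negligible compared to $\mu[0,l]$, so the inner integral is at most $(1+o(1))L_n\,\mu[0,\X_{2^{n+1}}]\le (1+o(1))\tfrac{\log \X_{2^n}}{2}$, yielding $e^{-\int}\ge \X_{2^n}^{-1/2-o(1)}$. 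Since $\E[\mu[0,\cdot]]$ is concave (Lemma~\ref{2}) with $\E[\mu[0,0]]=0$, its inverse $\X$ is convex with $\X_0=0$, so $\X_{2^{n+1}}-\X_{2^n}\ge \X_{2^n}$. Combining,
\[
\E[W_n\mid\mu]\ \ge\ 2^n\,\X_{2^n}\,\X_{2^n}^{-1/2-o(1)}\ =\ 2^n\,\X_{2^n}^{1/2-o(1)},
\]
which is much larger than the target $2^{n+2}\X_{2^n}^{1/3}$.

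For concentration, the second-order Mecke formula combined with the elementary observation that two ``long'' cuts are automatically at distance $\ge L_n$ apart (so their forbidden intervals $(Y_a,Y_a+L_n)$ are disjoint) gives
\[
\E[W_n(W_n-1)\mid\mu]\ \le\ \Big(\int_{\X_{2^n}}^{\X_{2^{n+1}}}\!\mu[0,l]\,e^{-\int_l^{l+L_n}\mu[0,s]\,ds}\,dl\Big)^{2}\ =\ (\E[W_n\mid\mu])^{2},
\]
so $\Varr(W_n\mid\mu)\le \E[W_n\mid\mu]$. Chebyshev's inequality then yields $\proba(W_n\le \tfrac{1}{2}\E[W_n\mid\mu]\mid\mu)\le 4/\E[W_n\mid\mu]$, which is a.s.\ summable in $n$ (using $\X_{2^n}\ge 2^n$), and Borel--Cantelli gives $W_n\ge 2^{n+2}\X_{2^n}^{1/3}$ almost surely for all $n$ large.

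Finally, I show that almost all these $W_n$ long cuts also satisfy the mass condition $\mu[Y_a+L_n,Y_{a+1}]\ge \X_{2^{n+1}}^{-2}$. When $\theta_0>0$ the Lebesgue component alone gives $\mu[Y_a+L_n,Y_{a+1}]\ge \theta_0^2(Y_{a+1}-Y_a-L_n)$, so a long cut can fail the condition only if its gap falls in the tiny window $[L_n,\,L_n+\theta_0^{-2}\X_{2^{n+1}}^{-2}]$; a direct Palm integral shows the expected count of such bad cuts is $o(\E[W_n\mid\mu])$. When $\theta_0=0$ the mass is purely atomic, and a similar bound follows by estimating the probability that no atom $X_i$ with weight $\theta_i\ge \X_{2^{n+1}}^{-2}$ lies in the short interval immediately after $Y_a+L_n$, using the divergence $\sum_i\theta_i=\infty$ to guarantee enough atoms at the appropriate scale. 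The main obstacle is precisely this last atomic-case estimate, which requires a careful split of the atoms by magnitude to show that the expected number of bad long cuts is negligible; the Palm and Chebyshev computations are otherwise mechanical.
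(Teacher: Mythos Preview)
Your Mecke/second-moment computation for $W_n$, the number of long gaps, is correct and rather elegant: the observation that two long cuts are automatically at distance $\ge L_n$, so their forbidden windows $(Y_a,Y_a+L_n)$ are disjoint and hence conditionally independent under the Poisson process, gives $\E[W_n(W_n-1)\mid\mu]\le(\E[W_n\mid\mu])^2$ immediately. The first-moment lower bound is also fine. This is a genuinely different route from the paper for the counting part.

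The gap is exactly where you flag it: the mass condition when $\theta_0=0$. Your sketch there is not a proof, and I do not see how to complete it along the lines you suggest. Conditionally on $\mu$, a long cut at $l$ is ``bad'' when $\mu[l+L_n,Y_{a+1}]<\X_{2^{n+1}}^{-2}$; the conditional probability of this event is governed by how far to the right of $l+L_n$ one must go before accumulating $\mu$-mass $\X_{2^{n+1}}^{-2}$, and in the purely atomic regime this waiting length can be large on a set of $l$'s of non-negligible Lebesgue measure. Integrating against the Poisson intensity $\mu[0,l]\,dl$ does not obviously kill the expected bad count, and the hypothesis $\sum_i\theta_i=\infty$ gives no quantitative control at the scale $\X_{2^{n+1}}^{-2}$ on the particular window $[\X_{2^n},\X_{2^{n+1}}]$.

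The paper sidesteps this entirely by reversing the logic: rather than count gaps and then check mass, it lower-bounds the total $\mu$-mass $\mu(S_n)$ of the union of long segments via a second-moment argument on the $\mu$-integral $A_n=\int \1_{\text{no cut in }[x-L_n,x]}\,d\mu(x)$ (with the variance controlled through Lemma~\ref{6}), and upper-bounds the mass of each individual long segment by $\log^2\X_{2^n}/\X_{2^n}$ using Lemma~\ref{10}. Since the segments failing the threshold together carry mass at most $(\#\text{cuts})\cdot\X_{2^{n+1}}^{-2}=O(2^n/\X_{2^n})$, the remaining mass forces $\#\I_n$ to be large. Working with $\mu$-mass instead of counts is precisely what makes the threshold condition automatic, and is the idea you are missing.
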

\begin{proof}
Write $ \I'_n$ for the set of segments $[Y_a+L_n, Y_{a+1}]$ with
\[ Y_a\in [\X_{2^n},\X_{2^{n+1}}) \quad ; \quad Y_a+L_n\leq Y_{a+1} \quad ; \quad \mu \left [Y_a+L_n, Y_{a+1} \right ] <  \frac{1}{\X_{2^{n+1}}^2}. \]
First by Lemmas \ref{8} and \ref{5}, for every $n$ large enough, there are at most $2^{n+2}\X_{2^{n+1}}$ cuts on $\left  [0,\X_{2^{n+1}} \right ]$, hence $\#  \I'_n \leq 2^{n+2}\X_{2^{n+1}}$. Furthermore, by Lemma \ref{10},  for every $n$ large enough and $I\in\I_n$, we have $\mu( I) \leq \log^2 \X_{2^n}/ \X_{2^n}$ so
\[
\sum_{I\in \I_n\cup  \I'_n} \mu \left ( I \right )
 =\sum_{I\in \I_n} \mu \left ( I \right ) +\sum_{I\in  \I'_n} \mu \left ( I \right )
 \leq \# \I_n\frac{\log^2 \X_{2^n}}{\X_{2^n}}+ \frac{2^{n+2}}{\X_{2^{n+1}}} . 
\]
Therefore, since $\X_{2^{n+1}}\geq \X_{2^n}$, it suffices to prove that, writing $S_n:= \bigcup_{I\in \I_n\cup \I'_n} I$,
\begin{equation} \mu \left ( S_n\right ) > 2^{n+2}{\X_{2^n}^{-2/3}}{\log^2 \X_{2^{n}}}+\frac{2^{n+2}}{\X_{2^n}}\label{0210}. \end{equation} 

Note that for every $x\in [\X_{2^n},\X_{2^{n+1}}]$, $x\in S_n$ if and only if there is a cut in $[\X_{2^n},x]$ and no cut in $[x-L_n,x]$. So if there is a cut in $[\X_{2^n},\X_{2^n}+1]$,
 \[ \mu \left ( S_n \right )
 \geq
  \int_{\X_{2^n}+L_n+1}^{\X_{2^{n+1}}} \emptyset_{x-L_n,x}d\mu(x), \] 
  where for every $x\leq y$, $\emptyset_{x,y}:=\1_{\forall i\in \N,\,Y_i\notin [x,y]}$.
 Let $A_n$ denotes the right-hand side above. Since, conditionally on $\mu$, $(Y_i)_{i\in \N}$ is a Poisson point process of rate $\mu[0,l]dl$, for every $n$ large enough
 \[ \proba \left ( \left . A_n >\mu \left ( S_n \right ) \right | \mu\right )  \leq  \proba \left ( \left . \emptyset_{\X_{2^n},\X_{2^n}+1}=0 \right | \mu\right )
\leq e^{-\mu \left[0,\X_{2^n} \right ]}
 \leq  e^{-2^{n-1}}. \]
Therefore, by the Borel--Cantelli lemma, almost surely for every $n$ large enough $\mu \left ( S_n \right )\geq A_n$.
 
We now lower bound $A_n$ via a second moment method. We have, still by the properties of $(Y_i)_{i\in \N}$,
\begin{equation} \E[ A_n|\mu] \geq  \int_{\X_{2^n}+L_n+1}^{\X_{2^{n+1}}}  e^{- \mu \left[0,\X_{2^{n+1}} \right ]L_n}d\mu(x) =  e^{-\mu \left[0,\X_{2^{n+1}} \right ]L_n }\mu \left [\X_{2^n}+L_n+1,\X_{2^{n+1}}\right ].\label{18002}
 \end{equation}
 Furthermore note that $\frac{1+L_n}{\X_{2^n}}\to0$  as $n\to \infty$, hence by Lemmas \ref{5} and \ref{2} almost surely as $n\to \infty$,
 \begin{equation*}\mu[\X_{2^n}+L_n+1,\X_{2^{n+1}}] = \E[\mu[0,\X_{2^{n+1}}]](1+o(1))-\E[\mu[0,\X_{2^n}+L_n+1]](1+o(1)) 
 \sim  2^n.
  \end{equation*}
It follows from \eqref{18002}, Lemma \ref{5}, and the definition of $L_n$ that, as $n\to \infty$,
\begin{equation} \E[A_n| \mu] \geq \X_{2^n}^{-1/2+o(1)}  2^n. \label{1800} \end{equation}

Moreover we have by Fubini's theorem,
\[ \Varr[ A_n|\mu]=\int_{\X_{2^n}+L_n+1}^{\X_{2^{n+1}}}\int_{\X_{2^{n}}+L_n+1}^{\X_{2^{n+1}}} \Covv \left [ \left . \emptyset_{x-L_n,x}\emptyset_{y-L_n,y} \right | \mu \right ]d\mu(x)d\mu(y). \]
Note that for every $x,y\in \R^+$, $\Covv \left [ \left . \emptyset_{x-L_n,x} \emptyset_{y-L_n,y} \right | \mu \right ]\leq \E \left [ \emptyset_{y-L_n,y}  | \mu  \right ]$, and that conditionally on $\mu$, $\emptyset_{x-L_n,x}$ and $\emptyset_{y-L_n,y}$ are independent when $|y-x|>L_n$. It follows that,
\begin{align} \Varr[ A_n|\mu] & \leq  \int_{\X_{2^n}+L_n+1}^{\X_{2^{n+1}}}  \E \left [ \emptyset_{y-L_n,y}  | \mu  \right ] \int_{y-L_n}^{y+L_n} d\mu(x)d\mu(y) \notag
\\ & \leq \E[A_n|\mu] \max_{\X_{2^n}+L_n+1\leq y \leq \X_{2^{n+1}}} \mu[y-L_n,y+L_n]. \label{1801}\end{align}
Furthermore by Lemma \ref{6}, for every $n$ large enough and $y\in [\X_{2^n}+1,\X_{2^{n+1}}-L_n]$,
 \begin{equation}  \mu[y,y+2L_n] \leq 4L_n \frac{\E[\mu[0,y]]}{y} + 13\frac{\log y}{y}
  \leq 4 \frac{\log \X_{2^n}}{2^{n+2}} \frac{\E[\mu[0,\X_{2^{n+1}}]]}{\X_{2^n}}+13\frac{\log \X_{2^n}}{\X_{2^n}}. \label{1802} \end{equation}
Put together \eqref{1801} and \eqref{1802} yield as $n\to \infty$,
\begin{equation} \Varr[ A_n|\mu] \leq \E[A_n|\mu]  \frac{17\log \X_{2^n}}{\X_{2^n}}. \label{1610} \end{equation}
Therefore, by Chebyshev's inequality,  \eqref{1610}, and \eqref{1800}, we have as $n\to \infty$,
 \begin{equation*} \proba\left ( \left .A_n \leq \frac{\E[A_n|\mu]}{2}  \right | \mu \right ) 
  \leq 4\frac{\Varr[ A_n|\mu]}{\E[ A_n|\mu]^2} \leq \frac{O(1)\log \X_{2^n}}{\X_{2^n}\E[ A_n|\mu]}\leq  2^{-n}\X_{2^n}^{-1/2+o(1)}.
 \end{equation*} 
 So by the Borel--Cantelli lemma, almost surely for every $n$ large enough $A_n \geq \frac{\E[A_n|\mu]}{2}$. 
Finally the inequality in \eqref{0210} follows from \eqref{1800} and the fact that for every $n$ large enough $\mu(S_n)\geq A_n$. This concludes the proof.
 \end{proof}

Formally we call the segments in $\bigcup_{n\in \N} \mathcal{I}_n$ "long". The following lemma proves that those long segments tend to "glue" to one another.
\begin{lemma} \label{glue} For every $I\in \bigcup_{n\in \N} \I_n$ let $a_I$ denotes the only integer such that $I\subset (Y_{a_I},Y_{a_I+1}]$. Almost surely for every $n,m\in \N$ large enough with $n<m$ and $\X_{2^m}\geq \X_{2^{n+1}}^8$, for every $I\in \I_{n}$ there exists $I' \in \I_{m}$ such that $Z_{a_{I'}}\in I$. In this case we say that  $I'$ is glued on $I$. 
\end{lemma}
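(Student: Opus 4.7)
The starting point is the independence structure of Algorithm \ref{Alg3}: conditionally on $(\mu,\{Y_i\}_{i\in\N})$, the glue points $\{Z_i\}_{i\in\N}$ are mutually independent, with $Z_i$ of law $\mu_{Y_i}/M_i$. Since the families $\I_n$ and $\I_m$, and therefore the indices $a_I$ and $a_{I'}$, are $\sigma(\mu,\{Y_i\})$-measurable, the events $\{Z_{a_{I'}}\in I\}$ for distinct $I'\in\I_m$ are conditionally independent. I would therefore work throughout conditionally on $(\mu,Y)$.

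Fix $n<m$ large with $\X_{2^m}\geq \X_{2^{n+1}}^8$, and fix $I\in\I_n$. By definition of $\I_m$ we have $Y_{a_{I'}}<\X_{2^{m+1}}$, and since $a_I<a_{I'}$ (because $Y_{a_I}<\X_{2^{n+1}}\leq \X_{2^m}\leq Y_{a_{I'}}$) we have $I\subset[0,Y_{a_{I'}}]$. Combining $\mu(I)\geq \X_{2^{n+1}}^{-2}$ (definition of $\I_n$) with the bound $\mu[0,\X_{2^{m+1}}]\leq 2^{m+2}$ for $m$ large (Lemma \ref{5}), I obtain
\[ \proba\bigl(Z_{a_{I'}}\in I \bigm|\mu,Y\bigr) \;=\; \frac{\mu(I)}{\mu[0,Y_{a_{I'}}]} \;\geq\; \frac{1}{\X_{2^{n+1}}^2\,2^{m+2}}. \]
Using independence and the lower bound $\#\I_m\geq 2^{m+2}\X_{2^m}^{1/3}$ from Lemma \ref{Long}, the probability that no $I'\in\I_m$ glues on $I$ is bounded by
\[ \exp\!\left(-\frac{\#\I_m}{\X_{2^{n+1}}^2\,2^{m+2}}\right) \;\leq\; \exp\!\left(-\frac{\X_{2^m}^{1/3}}{\X_{2^{n+1}}^2}\right) \;\leq\; \exp\!\left(-\X_{2^m}^{1/12}\right), \]
where the last inequality uses $\X_{2^{n+1}}^2\leq \X_{2^m}^{1/4}$, which is precisely the standing assumption.

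A union bound over $I\in\I_n$, with $\#\I_n$ controlled by the total number of cuts on $[0,\X_{2^{n+1}}]$ and hence by $2\X_{2^{n+1}}^2\leq 2\X_{2^m}^{1/4}$ (Lemma \ref{8}), shows that the ``bad'' event for the pair $(n,m)$ has probability at most $2\X_{2^m}^{1/4}\exp(-\X_{2^m}^{1/12})$. To finish I would apply Borel--Cantelli over the countable set of admissible pairs: for each $m$ there are at most $m$ valid $n$'s, and Lemma \ref{2} gives $\X_{2^m}\geq c\,2^m$, so the double sum converges trivially. The main obstacle is purely one of calibration: the ``surplus'' exponent $1/3$ in $\#\I_m$ from Lemma \ref{Long} must dominate the squared-denominator $\X_{2^{n+1}}^2$ in the gluing probability after exponentiation, and then still leave enough margin for the union bound over $I\in\I_n$; the hypothesis $\X_{2^m}\geq \X_{2^{n+1}}^8$ is tuned exactly so that $1/3-1/4>0$ yields a positive power of $\X_{2^m}$ in the exponent, which is all one needs.
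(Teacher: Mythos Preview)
Your proof is correct and follows essentially the same route as the paper's: condition on $(\mu,\{Y_i\})$, use independence of the $Z_i$'s to bound the probability that no $I'\in\I_m$ lands in a fixed $I\in\I_n$ by $\exp(-\#\I_m\cdot\mu(I)/\mu[0,\X_{2^{m+1}}])$, feed in Lemma~\ref{Long} and the hypothesis $\X_{2^{n+1}}^2\leq\X_{2^m}^{1/4}$ to get $\exp(-\X_{2^m}^{1/12})$, then union bound over $\I_n$ and Borel--Cantelli over pairs $(n,m)$. The only cosmetic difference is that you bound $\#\I_n\leq 2\X_{2^m}^{1/4}$ while the paper uses the coarser $\#\I_n\leq 2\X_{2^m}^2$; either suffices since $x^2e^{-x^{1/12}}$ is still summable along $x=2^m$.
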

\begin{proof}
Conditionally on $\f:=\sigma(\mu,(Y_i)_{i\geq 1})$, $(Z_i)_{i\geq 1}$ are independent random variables with law $(p_{Y_i})_{i\in \N}$ so for every $i\in \N$ and $I\in \I_{n}$
\begin{align*} \proba \left ( \left. \forall I' \in \I_{m}, Z_{a_{I'}}\notin I \right | \f \right )
 =  \prod_{I'\in \I_{m}}  \left (1-\frac{\mu(I)}{\mu[0,Y_{a_{I'}}]} \right ) 
 \leq  \exp \left (-\#\I_{m} \frac{\mu(I)}{\mu\left [0,\X_{2^{m+1}} \right ]} \right ).
\end{align*}
Furthermore we have by definition of $I_{n}$, $\mu(I)\leq \X_{2^{n+1}}^{-2}\leq \X_{2^m}^{-1/4}$. It follows from Lemmas \ref{Long} and \ref{5} that for every $m$ large enough, 
\[ \proba \left ( \left. \forall I' \in \I_{m}, Z_{a_{I'}}\notin I \right | \f \right ) \leq \exp \left (-\X_{2^m}^{-1/4}\frac{ 2^{m+2}\X_{2^m}^{1/3} }{2^{m+2}} \right )=  \exp \left( -\X_{2^m}^{1/12} \right ). \]
Moreover, by Lemma \ref{8}, for every $i$ large enough $\#\I_{n}\leq 2\X_{2^{n+1}}^2$, and by Lemma \ref{2} for every $m\in \N$, $\X_{2^{m}}\geq 2^{m}$. So for every $m$ large enough,
\[\proba \left ( \left. \exists I\in \I_{n}, \, \forall I' \in \I_{m}, Z_{a_{I'}}\notin I \right | \f \right ) \leq 2\X_{2^{n}}^2 e^{-\X_{2^{m}}^{1/12}} \leq f(\X_{2^m})\leq f(2^m), \]
where $f:x\mapsto 2x^2e^{-x^{1/12}}$. Since $\sum_{n=0}^{\infty}\sum_{m=n}^{\infty}f(2^m)<\infty$  the Borel--Cantelli lemma yields the desired result.
\end{proof}

\begin{lemma} \label{manger}
Almost surely for every $k$ large enough:
\[d_{H}(\T_{\X_{2^k}}, \T)\geq \frac{1}{128} \sum_{n=k}^{\infty} \frac{\log \X_{2^{n}}}{2^n}. \]
\end{lemma}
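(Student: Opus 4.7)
The plan is to construct a chain of long segments, glued recursively to one another via Lemma \ref{glue}, that reaches arbitrarily deep inside $\T$, and to sum their contributions to lower-bound $d_H(\T_{\X_{2^k}},\T)$.

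First I would define the chain of scales: set $n_0:=k$ and, for each $j\geq 0$, let $n_{j+1}$ be the smallest integer $m>n_j$ with $\X_{2^m}\geq \X_{2^{n_j+1}}^8$. This is exactly the condition that makes Lemma \ref{glue} applicable between scales $n_j$ and $n_{j+1}$. By Lemma \ref{Long}, for $k$ large enough each $\I_{n_j}$ is non-empty, so I can pick $I_0 \in \I_{n_0}$ arbitrarily and, inductively using Lemma \ref{glue}, find $I_{j+1}\in \I_{n_{j+1}}$ with $Z_{a_{I_{j+1}}}\in I_j$.

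Next I would prove by induction on $j$ that the far endpoint of the $j$-th segment satisfies $d(Y_{a_{I_j}+1},\T_{\X_{2^k}})\geq L_{n_0}+\dots+L_{n_j}$. The argument is purely geometric in the stick-breaking tree: to exit the stick $(Y_{a_{I_{j+1}}},Y_{a_{I_{j+1}}+1}]$ from its far endpoint one must travel distance $Y_{a_{I_{j+1}}+1}-Y_{a_{I_{j+1}}}\geq L_{n_{j+1}}$ back to the glue point $Z_{a_{I_{j+1}}}$; since $Z_{a_{I_{j+1}}}\in I_j=[Y_{a_{I_j}}+L_{n_j},Y_{a_{I_j}+1}]$, reaching $Z_{a_{I_j}}$ from there costs an additional $Z_{a_{I_{j+1}}}-Y_{a_{I_j}}\geq L_{n_j}$, and so on down the chain. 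Since $\X_{2^k}\leq Y_{a_{I_0}}$, the initial glue point $Z_{a_{I_0}}$ sits outside the inductive tower of sticks, so no shortcut in the $\R$-tree can bypass any of the accumulated $L_{n_j}$'s. Taking $j\to\infty$ yields $d_H(\T_{\X_{2^k}},\T)\geq \sum_{j\geq 0} L_{n_j}$.

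The main obstacle is the final arithmetic step: showing that this chain sum captures a positive fraction of the full tail, namely $\sum_j L_{n_j}\geq \tfrac{1}{32}\sum_{n\geq k}L_n$, which after substituting $L_n=\log\X_{2^n}/2^{n+2}$ is exactly the announced constant $\tfrac{1}{128}$. The key estimate is that by minimality of $n_{j+1}$ one has $\log\X_{2^n}<8\log\X_{2^{n_j+1}}$ for every $n\in [n_j,n_{j+1})$, while $\log\X_{2^{n_{j+1}}}\geq 8\log\X_{2^{n_j+1}}$, so $\log\X$ grows by at least a factor $8$ from each chain index to the next. A careful block-by-block summation must then show that $\sum_{n\in[n_j,n_{j+1})}L_n$ is at most an absolute constant times $L_{n_j}+L_{n_{j+1}}$, handling separately the short-gap regime (where $L_n$ may increase inside a block and the block is controlled by its largest term, itself comparable to $L_{n_{j+1}}$) and the long-gap regime (where the geometric weight $2^{-n}$ forces the block sum to be dominated by $L_{n_j}$). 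Telescoping over $j$ and combining with the chain distance bound produces the claimed inequality.
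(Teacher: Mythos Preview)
Your strategy matches the paper's: build a chain of long segments via Lemmas~\ref{Long} and~\ref{glue}, sum their lengths $L_{n_j}$, and compare the chain sum to the full tail $\sum_{n\geq k}L_n$. The geometric part (paragraphs~2--3) is fine. The gap is in the arithmetic step, and it stems from your choice of chain indices.

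With your definition $n_{j+1}:=\min\{m>n_j:\X_{2^m}\geq \X_{2^{n_j+1}}^8\}$, minimality gives $\log\X_{2^n}<8\log\X_{2^{n_j+1}}$ on each block $[n_j,n_{j+1})$, so the block sum is controlled by $L_{n_j+1}$, \emph{not} by $L_{n_j}$. There is no absolute bound on $\log\X_{2^{n_j+1}}/\log\X_{2^{n_j}}$: nothing in the model prevents a huge jump precisely at $n_j+1$, followed by a long stretch where $\log\X_{2^n}$ grows only linearly in~$n$ (this is consistent with the convexity $\X_{2l}\geq 2\X_l$). In that scenario the block $[n_j,n_{j+1})$ contributes of order $L_{n_j+1}$ to $\sum_n L_n$, while both $L_{n_j}$ and $L_{n_{j+1}}$ are negligible by comparison; your claimed bound $\sum_{n\in[n_j,n_{j+1})}L_n\leq C(L_{n_j}+L_{n_{j+1}})$ therefore fails in the long-gap regime, and so does the global inequality $\sum_j L_{n_j}\geq\tfrac{1}{32}\sum_{n\geq k}L_n$.

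The paper sidesteps this by reversing the trade-off: it defines $n_{i+1}:=\inf\{n>n_i:\X_{2^n}\geq\X_{2^{n_i}}^{8}\}$ (note $\X_{2^{n_i}}$, not $\X_{2^{n_i+1}}$). Then on each block one has $\log\X_{2^n}<8\log\X_{2^{n_i}}$, and the geometric weight immediately gives $\sum_{n\in[n_i,n_{i+1})}\frac{\log\X_{2^n}}{2^n}\leq 64\,L_{n_i}$ with no case analysis. The price is that consecutive $n_i$'s no longer meet the hypothesis of Lemma~\ref{glue}; the paper recovers it by observing that $\X_{2^{n_{2i+2}}}\geq\X_{2^{n_{2i+1}}}^8\geq\X_{2^{n_{2i}+1}}^8$, so one can glue along the even (and separately the odd) subsequence, obtaining $d_H\geq\tfrac12\sum_i L_{n_i}$ and hence the constant~$\tfrac{1}{128}$.
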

\begin{proof} First define by induction $(n_i)_{i\in \N}$ such that $n_0=k$ and such that for every $i\geq 0$, $n_{i+1}=\inf\{n\in \N: n>n_i, \X_{2^{n}}\geq \X_{2^{n_i}}^8\}$. Note that for $i\in \N$, $\X_{2^{n_{2i+2}}}\geq \X^8_{2^{n_{2i+1}}} \geq \X_{2^{n_i+1}}^8,$ so by Lemma \ref{glue}, there exists a sequence $\{I_i\}_{i\in \N}$ such that for every $i\in \N$, $I_i\in \I_{n_{2i}}$ and $I_{i+1}$ is glued on $I_i$. On this event, note that for every $j\in \N$ and $x\in I_j $, $x$ is at distance at least $\sum_{i=0}^{j-1} L_{n_{2i}}$ of $\X_{2^k}$ so $d_{H}(\T_{\X_{2^k}}, \T) \geq \sum_{i=0}^{\infty} L_{n_{2i}}$. Similarly we have $d_{H}(\T_{\X_{2^k}}, \T) \geq \sum_{i=0}^{\infty} L_{n_{2i+1}}$, hence 
\[ d_{H}(\T_{\X_{2^k}}, \T)\geq  \frac{1}{2}\sum_{i=0}^{\infty} L_{n_{i}}.\]

Finally we compare $\sum_{i=0}^{\infty} L_{n_{i}}$ with $\sum_{n=k+1}^{\infty} \frac{\log \X_{2^{n}}}{2^n}$. By definition of $\{n_i\}_{i\in \N}$ we have:
\begin{equation*} \sum_{n=k}^{\infty} \frac{\log \X_{2^{n}}}{2^n}
 = \sum_{i=0}^{\infty} \sum_{n=n_i}^{n_{i+1}-1}\frac{\log \X_{2^{n}}}{2^n} \leq \sum_{i=0}^{\infty} \sum_{n=n_i}^{n_{i+1}-1}\frac{8\log \X_{2^{n_i}}}{2^n} \leq 64 \sum_{i=0}^{\infty} \frac{\log \X_{2^{n_i}}}{2^{n_i+2}} = 64 \sum_{i=0}^{\infty} L_{n_{i}}. \notag 
\end{equation*}
This concludes the proof.
\end{proof}

 The previous lemma proves that when $\sum \frac{\log \X_{2^{n}}}{2^n}=\infty$ the tree is not compact, thus finishing the proof of Theorem \ref{THM1}. The next lemma gives a more precise description of the geometry of the tree in the non-compact case: "the tree is infinite in every direction".
\begin{lemma} \label{NonCompact}
Suppose that $\sum \frac{\log \X_{2^{n}}}{2^n}=\infty$ then almost surely for every $a<b<c$, $[a,b]^{\uparrow c}$ has infinite diameter.
\end{lemma}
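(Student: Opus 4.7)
My plan is to adapt the chain construction of Lemma \ref{manger} so that the "deep" points produced land in $[a,b]^{\uparrow c}$, and not just somewhere outside $\T_{\X_{2^k}}$. A countability argument first reduces the statement to rational triples: given real $a<b<c$, pick rationals $a',b',c'$ with $a<a'<b'<b<c<c'$, and observe that $[a',b']^{\uparrow c'}\subset[a,b]^{\uparrow c}$ (since the projection on $\T_c$ of a point whose projection on $\T_{c'}\supset \T_c$ already lies in $[a',b']\subset \T_c$ is itself). So fix a rational triple $a<b<c$. By the full support of $\mu$ established during the proof of Theorem \ref{THM1}, $\mu[a,b]>0$ almost surely.

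The first step is to show that for every $m$ large enough, some $I\in\I_m$ satisfies $Z_{a_I}\in[a,b]$. Conditionally on $\f:=\sigma(\mu,(Y_i)_{i\in\N})$, the variables $(Z_{a_I})_{I\in\I_m}$ are independent with respective laws $p_{Y_{a_I}}$. Since $I\in\I_m$ implies $Y_{a_I}<\X_{2^{m+1}}$, and $[a,b]\subset[0,Y_{a_I}]$ once $\X_{2^{m+1}}>c$, Lemma \ref{5} gives $p_{Y_{a_I}}[a,b]\geq \mu[a,b]/2^{m+2}$ for $m$ large. Combined with $\#\I_m\geq 2^{m+2}\X_{2^m}^{1/3}$ from Lemma \ref{Long}, this yields
\[ \proba\left(\forall I\in\I_m,\ Z_{a_I}\notin[a,b]\,\big|\,\f\right)\leq \exp\left(-\mu[a,b]\,\X_{2^m}^{1/3}\right), \]
which is almost surely summable in $m$, because $\X_{2^m}\geq 2^m$ eventually (Lemma \ref{2}) and $\mu[a,b]>0$. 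A Borel--Cantelli argument then gives the desired existence for every sufficiently large $m$.

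Next I would pick $m_0$ as large as needed and a corresponding $I_0\in\I_{m_0}$ with $Z_{a_{I_0}}\in[a,b]$. With the sequence $(n_i)$ of Lemma \ref{manger} starting at $n_0=m_0$, I would apply Lemma \ref{glue} along the even indices to obtain a chain $I_0,I_1,I_2,\dots$ with $I_k\in\I_{n_{2k}}$ and $I_{k+1}$ glued on $I_k$ (switching to the odd indices if the even-index $L$-sum happens to converge). Because $Y_{a_{I_0}}>c$ while $Z_{a_{I_0}}\in[a,b]\subset[0,c)$, every $I_k$ sits in the subtree hanging off $Z_{a_{I_0}}$ in $\T\setminus\T_c$; tracing the geodesic from any $x\in I_k$ back through the glue points $Z_{a_{I_k}},Z_{a_{I_{k-1}}},\dots,Z_{a_{I_0}}$ shows that the projection of $x$ on $\T_c$ is exactly $Z_{a_{I_0}}\in[a,b]$, so $x\in[a,b]^{\uparrow c}$. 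The length constraint in the definition of $\I_{n_{2j}}$ then gives $d(x,Z_{a_{I_0}})\geq \sum_{j=0}^{k}L_{n_{2j}}$, and the even/odd comparison of Lemma \ref{manger} ensures that at least one of the two chains has divergent $L$-sum under the hypothesis $\sum\log\X_{2^n}/2^n=\infty$, forcing $\diam([a,b]^{\uparrow c})=+\infty$.

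I expect the first step to be the main obstacle: one has to turn the deterministic combinatorial bound on $\#\I_m$ together with the lower bound of order $2^{-m}$ on $p_{Y_{a_I}}[a,b]$ into an almost sure, uniform-in-$m$ existence statement. It works precisely because $\mu[a,b]$ is strictly positive almost surely and $\X_{2^m}^{1/3}$ grows at least like $2^{m/3}$, so the $\f$-conditional failure probability decays rapidly enough for Borel--Cantelli. Once this is in hand, the chain construction and the projection argument are direct adaptations of the proof of Lemma \ref{manger}.
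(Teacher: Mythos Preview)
Your proposal is correct and follows essentially the same approach as the paper: use the full support of $\mu$ together with Lemma \ref{Long} to find a long segment $I_0$ glued inside $[a,b]$, then the chain construction of Lemma \ref{manger} to grow an unbounded path from $I_0$ inside $[a,b]^{\uparrow c}$. The paper organizes it slightly differently---it first proves as a stand-alone fact that \emph{every} long segment spawns a subtree of infinite diameter, so that a single glued $I_0$ suffices (obtained from the failure probability merely tending to $0$, rather than your stronger Borel--Cantelli summability)---but the substance is the same.
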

\begin{remark} An equivalent result is proved in Le Gall and Le Jan \cite{IntroICRT1} for non-compact L\'evy trees: the set of values taken by the height process on any non-trivial open interval contain a half line $[a,\infty)$.
\end{remark}
\begin{proof} 
First one may adapt the argument of the proof of Lemma \ref{manger} to prove that for every $k\in \N$ large enough and $I\in \I_k$,
\begin{equation} \diam((Y_{a_I},Y_{a_I+1}]^{\uparrow Y_{a_{I}+1}}) \geq  L_{k}+\frac{1}{2} \left (\sum_{i=1}^{\infty} L_{n_{2i+1}}+\sum_{i=1}^{\infty} L_{n_{2i}} \right ) \geq \frac{1}{64} \sum_{n=n_2}^{\infty} \frac{\log \X_{2^{n}}}{2^n}= \infty, \label{18102}\end{equation}
where $\{n_i\}_{i\in \N}$ is defined in the proof of Lemma \ref{manger} and $a_I$ in Lemma \ref{glue}. We leave the details to the reader. 

We now fix $a<b<c\in \R^+$. Since conditionally on $\f:=\sigma(\mu,(Y_i)_{i\geq 1})$, $(Z_i)_{i\geq 1}$ are independent random variables with law $(p_{Y_i})_{i\in \N}$, we have for every $m\in \N$,
\[ \proba \left ( \left . \forall I \in \I_m, Z_{a_I}\notin [a,b] \right | \f  \right )=\prod_{I'\in \I_{m}}  \left (1-\frac{\mu[a,b]}{\mu[0,Y_{a_{I'}}]} \right )  \leq \exp \left (-\frac{\mu[a,b]\# \mathcal I_m}{\mu[0,\X_{2^{m+1}}]} \right). \]
Since $\mu$ has full support it follows from Lemmas \ref{5} and \ref{Long} that the right-hand side above converges to $0$ as $m\to \infty$. Therefore for every $n\in \N$, there exists almost surely $m\geq n$ and $I\in \mathcal I_{m}$ such that $Z_{a_I}\in [a,b]$. It follows from \eqref{18102} that if $m$ is large enough $[a,b]^{\uparrow Y_{a_I}}$ has infinite diameter, hence $[a,b]^{\uparrow c}$ also has infinite diameter. Since $a<b<c$ are arbitrary and since rational numbers are dense on $\R^+$, the desired  claim follows.
\end{proof}
%
%
\section{Fractal dimensions : proof of theorem \ref{THM3}}  \label{Section 7}
In this section we prove Theorem \ref{THM3}. By Lemma \ref{FalconPunch}, it suffices to upper bound the Minkowski dimensions and to lower bound the Packing and Hausdorff dimension. We obtain the upper bounds from some simple cover of $\T$ and we derive the lower bounds from Lemma \ref{Hausdorff}. \subsection{Upper bound for the Minkowski dimensions}
First from the change of variables $u=\X_l$, note that the upper bound for the Minkowski dimensions given by Theorem \ref{THM3} are equivalent to
\[ \text{(a)} \quad \overline{\dim}(\T)\leq \limsup_{l\to \infty} \frac{ \log l\X_l}{\log l} \quad \text{and} \quad \text{(b)} \quad \underline{\dim}(\T)\leq \liminf_{l\to \infty} \frac{ \log l\X_l}{\log l} \ \  \text{when} \ \ \log \X_l=l^{o(1)} .  \]

Then for every $l\in \R$, $\T_{\X_{l}}$ has total length $\X_{l}$, hence one can construct a cover of $\T_{\X_l}$ using $l\X_l$ balls of radius $2/l$. By increasing the radius of those balls by $d_H(\T_{\X_l},\T)$ one obtains a cover of $\T$. So for every $l\in \R^+$, 
\begin{equation} N_{2/l+d_H(\X_l,\X)} \leq l\X_l. \label{1102} \end{equation}
The claims (a) and (b) are applications of the inequality in \eqref{1102}.

Toward proving (a), we may assume that $\log \X_l =O(\log l)$ since otherwise the bound is trivial. It follows from Lemma \ref{BIG LEMMA} that $d_{H}(\T_{\X_{2^{k-1}}}, \T_{\X_{2^{k}}})=O(k/2^k)$ and summing over all $k\geq \log_2(l)$, we obtain $d_H(\X_{l},\T)=O (\log(l)/l )$. Therefore by \eqref{1102},
\[ \overline{\dim}(\T)=\limsup_{l\to \infty} \frac{ \log N_{1/l}}{\log l} =  \limsup_{l\to \infty} \frac{ \log N_{2/l+d_H(\T_{\X_l},\T)}}{-\log \left (2/l+d_H(\T_{\X_l},\T) \right )} \leq \limsup_{l\to \infty} \frac{\log (l\X_l) }{ \log l}.  \]
and (a) follows. (b) can be treated similarly by observing that  Lemma \ref{BIG LEMMA} and $\log \X_l=l^{o(1)}$ implies that $d_H(\X_{l},\T)= l^{-1+o(1)}$. We leave the details to the reader. This concludes the proof.

\subsection{Lower bound for the Packing dimension and the Hausdorff dimension}

In this section we show that almost surely,
\[  \dim_P(\T)\geq  \alpha :=1+\limsup_{l\to \infty} \frac{\log l}{\log \E[\mu[0,l]]} \quad \text{and} \quad \dim_H(\T)\geq \beta :=1+\liminf_{l\to \infty} \frac{\log l}{\log \E[\mu[0,l]]}. \]
To this end, by lemma \ref{Hausdorff} it suffices to prove that if $A$ is a random variable with law $p$ then almost surely for every $\delta>0$, $\liminf p(B(A,\e)) \e^{-\alpha-\delta}<\infty$ and $p(B(A,\e))=O(\e^{\beta+\delta})$ as $\e\to 0$. The two previous inequalities can be proved via an elementary computation using $\mu[0,l]\sim \E[\mu[0,l]]$ (Lemma \ref{5}),
\[ \text{(a)} \quad p(B(A,d(A,\T_{Y_i})))\leq \frac{1}{Y_i^{1+o(1)} \mu[0,Y_i]} \quad \text{ and }\quad \text{(b)} \quad d(A,\T_{Y_{i+1}})\geq \mu[0,Y_i]^{-1+o(1)}. \]
We omit the details and focus on the proof of (a) and (b).

Toward (a), let $\gamma$ be the geodesic path from $0$ to $A$ and let for every $i\in \N$, $j_i:=\min\{j\geq i : (Y_j,Y_{j+1}]\cap \gamma \neq \emptyset\}$. Note that since by Theorem \ref{THM1} almost surely $A\notin \R^+$, 
\[ B(A,d(A,\T_{Y_i})) \subset \{Z_{j_i}\}\cup (Y_{j_i},Y_{j_{i}+1}]^{\uparrow Y_{j_{i}+1}}. \]
Furthermore we have by Lemma \ref{10}, $\mu(Y_{j_i},Y_{j_{i+1}}]\leq \frac{\log^2 Y_i}{Y_i}$. Therefore by Lemma \ref{E=MC2} (iii), conditionally on $(\mu, \{Y_j\}_{j\in \N})$, for every $i$ large enough with probability at least $1-Y_{j_i}^{-5}\geq 1-Y_{i}^{-5}$,
\begin{equation} p\left ( B(A,d(A,\T_{Y_i})) \right ) \leq p\left (]Y_{j_i},Y_{j_{i+1}}]^{\uparrow Y_{j_{i+1}}} \right )\leq 2 \frac{\log^6 Y_{j_i}}{Y_{j_i} \mu[0,Y_{j_i}]} \leq 2 \frac{\log^6 Y_i}{Y_i\mu[0,Y_i]}. \label{NAVIGOA} \end{equation}
Moreover by Lemma  \ref{8}, we have $i=O(Y_i^2)$, hence $\sum_{i=1}^\infty Y_i^{-5}<\infty$. The Borel--Cantelli lemma then yields that almost surely \eqref{NAVIGOA} holds for every $i$ large enough, hence (a) holds.

Toward (b), let us first upper bound $\{p(S_n)\}_{n\in \N}$ where for $n\in \N$, $S_n$ denotes the set of $x\in \T$ such that $d(x,[0,\X_{2^n}])\leq \delta_n:=\frac{1}{2^n n^6}$. Let 
for every $n\in \N$,
\[a_n:=\max\{a : Y_a\leq \X_{n^22^n}\} \quad \text{ and } \quad  S'_n:=\left \{x\in \left (\X_{2^n}+\delta_n, Y_{a_n} \right ] : \left [x-\delta_n,x \right ]\cap \{Y_i\}_{i\in \N}\neq \emptyset \right \}. \]
Note that for every $n\in \N$, $S_n\subset ([0,\X_{2^n}+\delta_n]\cup S'_n)^{\uparrow Y_{a_n}}.$
Therefore by Lemma \ref{E=MC2} (ii), \ref{5} and \ref{10}, almost surely for every $n$ large enough: 
\begin{equation} p(S_n)\leq 2p_{Y_{a_n}} \left ([0,\X_{2^n}+\delta_n]\cup S'_n \right ) = 2\frac{\mu[0,\X_{2^n}+\delta_n]+\mu(S'_n)} {\mu[0,\X_{n^22^n}]-\mu(Y_{a_n},\X_{n^22^n}]}\leq  4\frac{2^n+\mu(S'_n)} {n^22^n}. \label{1210} \end{equation}

Furthermore since conditionally on $\mu$, $(Y_i)_{i\in \N}$ is a Poisson point process of rate $\mu[0,l]dl$, we have by Fubini's theorem, for every $n\in \N$:
\begin{align*}
 \E[ \mu(S'_n) |\mu] 
  & \leq   \int_{0}^{\X_{n^22^n}} \proba \left [ \left .  \left [x-\delta_n,x \right ]\cap\{Y_i\}_{i\in \N} \neq \emptyset \right  |\mu  \right ] d\mu(x) \\
 & \leq   \int_{0}^{\X_{n^22^n}} 1-e^{-\delta_n\mu \left[0, \X_{n^22^n} \right ]}d\mu(x) \\
 & \leq  \delta_n\mu \left[0, \X_{n^22^n} \right ]^2.
 \end{align*}
 It directly follows from Lemma \ref{5} that almost surely $\E[ \mu(S'_n) |\mu]=O(2^n/n^2)$ as $n\to \infty$. Thus by Markov's inequality and the Borel--Cantelli lemma almost surely $\mu(S'_n)=O(2^n)$. Therefore by \eqref{1210}, $p(S_n)=O\left (1/n^2 \right )$, hence by the Borel--Cantelli lemma almost surely for every $n$ large enough, $A\notin S_n$. 
 
Finally let for every $i\in \N$, $n_i:=\inf\{n\in \N, Y_{i+1}\leq \X_{2^{n}}\}$. We have by Lemma \ref{5} and \ref{10} almost surely $\mu[0,Y_i] \geq 2^{n_i+O(1)}$. Hence, since for every $i$ large enough $A\notin S_{n_i}$, we have,
\begin{equation*} d(A,Y_{i+1})\geq d(A,\X_{2^{n_i}}) \geq \frac{1}{2^{n_i} {n_i}^6} \geq \mu[0,Y_i]^{-1+o(1)}. \label{NAVIGOB} \end{equation*}
This concludes the proof of $(b)$ and therefore of Theorem \ref{THM3}.
\paragraph{Acknowledgment}
Thanks are due to Nicolas Broutin for interesting conversations and numerous advice on earlier versions of this paper.

\bibliographystyle{unsrt}

\appendix
\section{Appendix}
First let us prove an exponential concentration inequality for general P\'olya urns.
\begin{lemma} \label{P\'olya} Let $\{m_n\}_{n\geq 0}$ be a positive real-valued sequence. Let $(A_n)_{n\geq 0}$ be a sequence of positive real-valued random variables such that $A_0\leq m_0$ and such that for every $n\geq 0$, 
\[ \proba \left ( \left . A_{n+1}=A_n+m_{n+1} \right | A_n \right)=\frac{A_n}{M_n} \quad ; \quad \proba \left ( \left . A_{n+1}=A_n \right | A_n \right )=\frac{M_n-A_n}{M_n}, \]
where for every $n\geq 0$, $M_n=\sum_{i=0}^n m_n$. We say that in this case $(A_n)_{n\geq 0}$ is a $(A_0,\{m_i\}_{i\geq 0} )$ P\'olya urn. 

\begin{compactitem}
\item[a)] If $\sum_{m=0}^\infty \frac{m_n^2}{M_n^2}<\infty$, then almost surely for every $a\geq 0$ and $t\in \R^+$,
\[ \proba \left ( \left . \sup_{i\geq a} \left | \frac{A_i}{M_i} - \frac{A_a}{M_a} \right  | >   t\frac{A_a}{M_a}  \right | A_a \right ) \leq 2 \exp \left (-\frac{\frac{t^2}{4}\frac{A_a}{M_a}}{\sum_{n> a} \frac{m_n^2}{M_n^2}+t\max \left (\sum_{n> a} \frac{m_n^2}{M_n^2},  \max_{n>a} \frac{m_n}{M_n} \right )}\right ). \]
\item[b)]  If $\{m_n\}_{n\in \N}$ is bounded, then almost surely for every $a\geq 0$ and $t\in \R^+$,
\[ \proba \left ( \left . \sup_{i\geq a} \left | \frac{A_i}{M_i} - \frac{A_a}{M_a} \right  | >   t\frac{A_a}{M_a}  \right | A_a \right ) \leq 2 \exp \left (-\frac{t^2}{4(1+t)}\frac{A_a}{\max_{n>a} m_n} \right ). \]
\end{compactitem}
\end{lemma}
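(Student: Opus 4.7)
My plan is to apply a Freedman-type Bernstein inequality to the martingale $X_n := A_n/M_n$, combined with a localization argument to handle the fact that the predictable quadratic variation of $X_n$ depends on the process itself. A direct computation from the transition probabilities gives $\E[A_{n+1} \mid A_n] = A_n M_{n+1}/M_n$, so $(X_n)_{n\geq 0}$ is a $[0,1]$-valued martingale whose increment takes value $\tfrac{m_{n+1}}{M_{n+1}}(1-X_n)$ with probability $X_n$ and $-\tfrac{m_{n+1}}{M_{n+1}} X_n$ with probability $1-X_n$. In particular, writing $\sigma_k^2 := \Varr[X_k - X_{k-1} \mid \mathcal{F}_{k-1}]$, one has $|X_{n+1} - X_n| \leq m_{n+1}/M_{n+1}$ and
\[ \sigma_{n+1}^2 \;=\; \frac{m_{n+1}^2}{M_{n+1}^2}\, X_n(1-X_n) \;\leq\; \frac{m_{n+1}^2}{M_{n+1}^2}\, X_n. \]

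Fix $a \geq 0$ and $t > 0$, condition on $A_a$, and introduce the stopping time $\tau := \inf\{n > a : |X_n - X_a| > tX_a\}$. For $k-1 < \tau$ we have $X_{k-1} \leq (1+t)X_a$, so
\[ \sum_{k=a+1}^{n \wedge \tau} \sigma_k^2 \;\leq\; (1+t)\, X_a \sum_{n > a} \frac{m_n^2}{M_n^2} \;=:\; (1+t)\, X_a\, V. \]
Applying Freedman's inequality to the stopped martingales $(X_{n \wedge \tau} - X_a)$ and $-(X_{n\wedge\tau} - X_a)$, proved in the standard way via the exponential supermartingale $\exp\bigl(\lambda(X_{n\wedge\tau} - X_a) - \psi(\lambda c)\sum_{k \leq n\wedge\tau} \sigma_k^2\bigr)$ with $\psi(x) := (e^x - 1 - x)/x^2$, $c := \max_{n>a}(m_n/M_n)$, and Doob's maximal inequality, and then taking a union bound, yields
\[ \proba\bigl(\tau < \infty \,\big|\, A_a\bigr) \;\leq\; 2\exp\!\left(-\frac{t^2 X_a^2/2}{(1+t)\,X_a\, V + (tX_a/3)\, c}\right). \]
Dividing numerator and denominator by $X_a$ and separating the ranges $t \leq 1$ and $t \geq 1$ to absorb the factor $(1+t)$ into constants produces the bound (a).

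For (b), set $\bar m := \max_{n > a} m_n$. The telescoping identity $\frac{m_n}{M_{n-1}M_n} = \frac{1}{M_{n-1}} - \frac{1}{M_n}$ gives $V \leq \bar m\sum_{n > a}\bigl(\frac{1}{M_{n-1}} - \frac{1}{M_n}\bigr) \leq \bar m/M_a$, and trivially $c \leq \bar m/M_a$. Inserting these estimates into the inequality from (a) and using $X_a M_a = A_a$ transforms the exponent into a multiple of $t^2 A_a/((1+t)\bar m)$, yielding (b).

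The main subtlety is the localization step: the variance factor $X_n$ in $\sigma_{n+1}^2$ is random and could a priori grow, so Freedman's inequality cannot be applied with a deterministic variance bound on the whole path. The stopping time $\tau$ circumvents this at the price of a harmless $(1+t)$ factor that is absorbed into constants, and handling the two directions of $|X_n - X_a|$ separately produces the factor $2$ in the prefactor. Apart from this localization the argument is entirely standard.
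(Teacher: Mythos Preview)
Your argument is correct. Both the localization via the stopping time $\tau$ and the subsequent application of Freedman's inequality are sound: on $\{k-1<\tau\}$ one indeed has $X_{k-1}\le (1+t)X_a$, so the predictable quadratic variation of the stopped martingale is bounded by $(1+t)X_aV$, and the event $\{\tau<\infty\}$ coincides with $\{\sup_{n\ge a}|X_n-X_a|>tX_a\}$. The passage from your Freedman exponent $-\tfrac{t^2X_a/2}{(1+t)V+tc/3}$ to the stated form of (a) is a routine comparison (one checks directly that $2(V+t\max(V,c))\ge (1+t)V+tc/3$ for all $t\ge 0$), and your reduction of (b) to (a) via the telescoping bound $V\le \bar m/M_a$ is the same as the paper's.

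The paper takes a different route for (a): instead of stopping the martingale and invoking Freedman, it proves by backward induction on $b$ that
\[
\E\bigl[e^{\lambda X_c}\,\big|\,X_a\bigr]\;\le\;\E\bigl[e^{\lambda(1+\lambda\sum_{n=b+1}^c\delta_n^2)X_b}\,\big|\,X_a\bigr]
\]
for all $a\le b\le c$ and $|\lambda|$ below an explicit threshold $\Lambda_a$. Taking $b=a$ yields a sub-Gaussian bound on $\E[e^{\lambda|X_n-X_a|}\mid X_a]$ directly, without any stopping, and then Doob's inequality gives the maximal bound. In effect the paper absorbs the random variance factor $X_{k-1}$ into the running exponent rather than freezing it by localization. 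Your approach is cleaner conceptually since it reduces everything to a named inequality; the paper's approach is self-contained and avoids the extraneous $(1+t)$ factor at the source, though the final constants end up equivalent.
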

\begin{remark} Note that Lemma \ref{P\'olya} implies that almost surely $\{\frac{A_i}{M_i}\}_{i\in \N}$ is a Cauchy sequence and so converges. The statement should then be seen as an estimate on the speed of convergence. 
\end{remark}
\begin{proof} First let us explain why (b) follows from (a). We have for every $a\in \N$,
\[ \sum_{n>a}\frac{m_n^2}{M_n^2} \leq \max_{n>a} m_n \sum_{n>a}\frac{m_n}{M_n^2} \leq \max_{n>a} m_n \int_{M_a}^{+\infty} \frac{dx}{x^2} =\frac{\max_{n>a} m_n}{M_a}, \]
and (b) follows by replacing  $\max_{n>a} \frac{m_n}{M_n}$ and $\sum_{n>a}\frac{m_n^2}{M_n^2}$ by the upper bound $\frac{\max_{n>a} m_n}{M_a}$ in (a).

We focus henceforth on (a). To simplify the notation set for every $n\in \N$, $X_n:=\frac{A_n}{M_n}$ and $\delta_n:=\frac{m_n}{M_n}$. Also we write for every $a\in \N$, $\E^{(a)}[\dots]=\E[\dots |X_a]$.  We first prove by induction that for every $a,b,c\in \N$ and $\lambda\in \R$ satitisfying
\begin{equation} a\leq b\leq c\quad; \quad |\lambda| \leq \Lambda_a:=\frac{1}{4\max \left \{\sum_{n> a} \delta_n^2, \max_{n>a} \delta_n \right\}}\label{1701} \end{equation}
 we have
\begin{equation}  \E^{(a)} \left [e^{\lambda X_c} \right ] \leq f(a,b,c,\lambda):= \E^{(a)} \left [e^{\lambda \left(1+\lambda\sum_{n=b+1}^{c}\delta_n^2\right )X_b}\right ]. \label{lol} \tag{$P(a,b,c,\lambda)$}\end{equation}
Note that when $b=c$, $P(a,b,c,\lambda)$ is trivial. Therefore it suffices to prove that for every $a,b,c, \lambda$ such that $a\leq b< c$ and $\lambda\leq \Lambda_a$ that $f(a,b,c,\lambda)\leq f(a,b+1,c,\lambda)$. Fix $a\leq b<c$, $\lambda\leq \Lambda_a$  and let $\gamma:= \lambda  \left(1+\lambda\sum_{n=b+2}^{c}\delta_n^2\right )$. We have,
\begin{align} f(a,b+1,c,\lambda) & =  \E^{(a)}\left [ e^{\gamma X_{b+1}} \right ] \notag \\
 & =   \E^{(a)}\left [ e^{\gamma X_b}\E^{(b)} \left [ e^{\gamma \left ( X_{b+1}-X_b \right)} \right ] \right ] \notag{} \\ & =  
 \E^{(a)}\left [ e^{\gamma X_b} \left (X_b e^{\gamma \left ( \frac{A_b+m_{b+1}}{M_{b+1}} -X_b\right )}+\left (1- X_b\right ) e^{ \gamma \left (\frac{A_b}{M_{b+1}}- X_b \right )} \right )\right ] \notag{}
\\ & =   \E^{(a)}\left [ e^{\gamma  X_b} \left ( X_b e^{\gamma \delta_{b+1}\left (1- X_b\right)}+\left (1- X_b\right ) e^{-\gamma \delta_{b+1} X_b} \right )\right ]. \label{1401}
 \end{align}
 Furthermore by \eqref{1701}, $| \gamma| \leq \frac{5}{4}|\lambda| \leq \frac{5}{16\delta_{b+1}}$, hence since for every $0\leq x\leq 1$ and $|c| \leq \frac{5}{16}$, $xe^{c(1-x)}+(1-x)e^{-cx}\leq e^{\frac{16}{25}c^2x}$ , we have
\begin{equation} X_b e^{\gamma \delta_{b+1}\left (1- X_b\right)}+\left (1- X_b\right ) e^{-\gamma \delta_{b+1} X_b} \leq e^{\frac{16}{25}\gamma^2\delta_{b+1}^2 X_b}. \label{14012} \end{equation}
Finally by \eqref{1401}, \eqref{14012}, and $|\gamma|\leq \frac{5}{4}|\lambda|$,
\begin{equation*}f(a,b+1,c,\lambda) \leq   \E^{(a)}\left [ e^{\left (\gamma +\frac{16}{25}\gamma^2\delta_{b+1}^2 \right ) X_b} \right ] \leq \E^{(a)}\left [ e^{\left (\gamma +\lambda^2\delta_{b+1}^2 \right ) X_b} \right ] =f(a,b,c,\lambda) . \label{08103}\end{equation*}
This concludes our proof by induction of $P(a,b,c,\lambda)$. 

We now fix $a\in \N$. For every $n\in \N$ and $0\leq \lambda \leq \Lambda_a$, by $P(a,a,n,\lambda)$ and $P(a,a,n,-\lambda)$, we have the sub-Gaussian bound, $\E^{(a)}\left [e^{\lambda |X_n-X_a|} \right ] \leq  2e^{\lambda^2V_a/2}$ where $V_a:=2X_a\sum_{i>a} \delta_i^2$. 

Furthermore note that $\{X_n\}_{n\geq a}$ is a martingale, and hence that for every $\lambda\in \R^+$, $\{e^{\lambda |X_n-X_a|}\}_{n\geq a}$ is a sub-martingale. It follows by Doob's inequality that for every $t \in \R^+$ and $0\leq \lambda \leq \Lambda_a$,
\begin{equation} \proba^{(a)} \left (\sup_{n\geq a} |X_n-X_a| \geq t \right ) = \proba^{(a)} \left (\sup_{n\geq a} e^{\lambda |X_n-X_a|} \geq e^{\lambda t} \right ) \leq \sup_{n\geq a} \frac{\E^{(a)}\left[e^{\lambda |X_n-X_a|}\right ]}{e^{\lambda t}} \leq 2 e^{\lambda^2\frac{V_a}{2} -\lambda t} \label{10103} \end{equation}
On the one hand, for every $0\leq t\leq V_a \Lambda_a$, taking $\lambda:= t/V_a$ in \eqref{10103} gives,
\begin{equation}\proba^{(a)} \left (\sup_{n\geq a} |X_n-X_a| \geq t \right ) \leq 2e^{-\frac{t^2}{2V_a}}. \label{1902} \end{equation}
On the other hand, for every $t > V_a\Lambda_a$, taking $ \lambda:= \Lambda_a$ in \eqref{10103} gives,
\begin{align*} \proba^{(a)} \left (\sup_{n\geq a} |X_n-X_a| \geq t \right ) \leq 2e^{\Lambda_a^2\frac{V_a}{2}-\Lambda_at} = 
2e^{\frac{t^2}{V_a} \left ( \frac{1}{2}\left( \frac{t}{\Lambda_a V_a} \right )^{-2}- \left( \frac{t}{\Lambda_a V_a} \right )^{-1}\right )},
\end{align*}
hence since for every $x\geq 1$, $\frac{1}{2x^2}-\frac{1}{x}\leq -\frac{1}{2+x}$,
\begin{equation} \proba^{(a)} \left (\sup_{n\geq a} |X_n-X_a| \geq t \right ) \leq 2e^{-\frac{t^2}{2V_a+t/\Lambda_a}}. \label{10102} \end{equation}
By \eqref{1902} the last inequality is also true for $0\leq t\leq V_a \Lambda_a$. 
The desired inequality then directly follows from a reorganization of the different terms in \eqref{10102}. 
We omit the straightforward details.
\end{proof}
The following lemma is a version of the strong law of large number.
\begin{lemma} \label{strong} Let $p>0$, $\{X_i\}_{i\in \N}$ be a family of independent Bernoulli random variables with mean $p$. Let $\{a_i\}_{i\in \N}$ be a positive real-valued sequence and let for every $n\in \N$, $A_n:=\sum_{i=1}^n a_i$. Suppose that $A_n\limit \infty$ and $\sum_{n=1}^{\infty} \frac{a_n^2}{ A_n^2}<\infty$, then almost surely  \[ \sum_{i=1}^{n} a_iX_i\sim pA_n.\]
\end{lemma}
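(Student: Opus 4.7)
The statement is a weighted strong law of large numbers for independent Bernoullis, and the natural route is the classical two-step strategy combining the Kolmogorov convergence criterion with Kronecker's lemma.

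First, I would center the variables: set $Z_i := a_i(X_i - p)$, so that the $Z_i$ are independent and centered, with $\Var(Z_i) = a_i^2 p(1-p) \leq a_i^2/4$. The claim $\sum_{i=1}^n a_i X_i \sim p A_n$ almost surely is equivalent to $\frac{1}{A_n}\sum_{i=1}^n Z_i \to 0$ almost surely.

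Next, I would apply Kolmogorov's convergence criterion to the series $\sum_{i\geq 1} Z_i / A_i$. Its terms are independent and centered, and
\[ \sum_{i=1}^{\infty} \Var\!\left(\frac{Z_i}{A_i}\right) = p(1-p) \sum_{i=1}^{\infty} \frac{a_i^2}{A_i^2} < \infty \]
by hypothesis. Hence $\sum_{i\geq 1} Z_i/A_i$ converges almost surely.

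Finally, since $\{a_i\}$ is a positive sequence, $A_n$ is strictly increasing, and by assumption $A_n \to \infty$. Kronecker's lemma applied to the (deterministic, on the almost sure event of convergence of the series) sequence $z_i = Z_i$ with weights $A_n$ yields
\[ \frac{1}{A_n}\sum_{i=1}^{n} Z_i \longrightarrow 0 \quad \text{almost surely}, \]
which rewrites as $\sum_{i=1}^n a_i X_i \sim p A_n$ almost surely. This is the desired conclusion.

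No step looks delicate: both Kolmogorov's criterion and Kronecker's lemma are textbook tools and their hypotheses are verified directly from the assumption $\sum a_n^2/A_n^2 < \infty$ and $A_n \to \infty$. The only thing to be slightly careful about is that $\Var(X_i) = p(1-p)$ is uniformly bounded, so the Bernoulli assumption is used only to make the variances finite and bounded; the same proof would give an $L^2$ version for any independent sequence with uniformly bounded variances and common mean $p$.
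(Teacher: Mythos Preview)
Your proof is correct and follows essentially the same route as the paper: the paper invokes the three-series theorem (which here reduces to your Kolmogorov convergence criterion, since the terms are centered with summable variances) to get almost sure convergence of $\sum a_i(X_i-p)/A_i$, and then performs the Abel summation that underlies Kronecker's lemma by hand. The only cosmetic difference is that you cite Kronecker's lemma directly while the paper writes out the summation-by-parts step explicitly.
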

\begin{proof} Since $\sum_{n=1}^{\infty} \frac{a_n^2}{ A_n^2}<\infty$ and $\{X_i\}_{i\in \N}$ are independent random variables, the classical three series theorem implies that
$S_n:=\sum_{i=1}^n \frac{a_i (X_i-p)}{A_i}$ almost surely converges as $n\to \infty$. Therefore,
\[ \sum_{i=1}^n \frac{a_i (X_i-p)}{A_n}=\sum_{i=1}^n \frac{A_i}{A_n}(S_i-S_{i-1})=S_n-\sum_{i=1}^{n-1}S_i \frac{A_{i+1}-A_{i}}{A_n} \limit_{n\to \infty} 0. \qedhere\]
\end{proof}
\begin{remark} If there exists $\e>0$ such that $a_i=O(A_i^{1-\e})$, then as $n$ goes to infinity, 
\[ \sum_{n=1}^{n} \frac{a_i^2}{A_i^2} =O\left (\sum_{i=1}^{n} \frac{a_i}{A_i^{1+\e}}  \right ) = O \left (\int_1^{A_n} \frac{dx}{x^{1+\e}} \right ) =O(1).\]
\end{remark}
\end{document}